\documentclass[oneside,11pt]{amsart}
\usepackage{amsthm}
\usepackage{amssymb}
\usepackage{mathtools}

\newcommand{\N}{\mathbb{N}}
\newcommand{\Z}{\mathbb{Z}}

\newcommand{\D}{\mathbb{D}}

\newcommand{\K}{\mathbb{K}}
\newcommand{\ad}{\mathrm{ad}\,}

\newcommand{\End}{\mathrm{End}}
\newcommand{\id}{\mathrm{id}}

\newcommand{\NA}{\mathfrak{B}}

\newcommand{\Kspan}{\mathrm{span}_{\K}\,}
\newcommand{\supp}{\mathrm{supp}\,}

\newcommand{\trid}{\triangleright}
\newcommand{\ydG}{\prescript{G}{G}{\mathcal{YD}}}

\makeatletter
\numberwithin{equation}{section}
\numberwithin{figure}{section}
\numberwithin{table}{section}
\theoremstyle{plain}
\newtheorem{thm}{Theorem}[section]
\newtheorem*{thm*}{Theorem}
\newtheorem{lem}[thm]{Lemma}
\newtheorem{cor}[thm]{Corollary}

\newtheorem{pro}[thm]{Proposition}

\theoremstyle{remark}
\newtheorem{rem}[thm]{Remark}

\makeatother

\begin{document}

\title[Nichols algebras with finite root system of rank two]{Nichols algebras over groups\\
with finite root system of rank two III}

\author{I. Heckenberger}
\address{Philipps-Universit\"at Marburg\\ 
FB Mathematik und Informatik \\
Hans-Meerwein-Stra\ss e\\
35032 Marburg, Germany}
\email{heckenberger@mathematik.uni-marburg.de}

\author{L. Vendramin}
\address{Departamento de Matem\'atica, FCEN, Universidad de Buenos Aires, Pabell\'on 1, Ciudad Universitaria (1428), Buenos Aires, Argentina}
\email{lvendramin@dm.uba.ar}

\begin{abstract}
	We compute the finite-dimensional Nichols algebras over the sum of two simple
	Yetter-Drinfeld modules $V$ and $W$ over non-abelian epimorphic images of a certain
	central extension of the dihedral group of eight elements or
	$\mathbf{SL}(2,3)$, and such that the Weyl groupoid of the pair $(V,W)$ is
	finite. These central extensions appear in the classification of
	non-elementary finite-dimensional Nichols algebras with finite Weyl groupoid
	of rank two. We deduce new information on the structure of primitive elements of
  finite-dimensional Nichols algebras over groups.
\end{abstract}

\maketitle

\setcounter{tocdepth}{1}
\tableofcontents{}

\section*{Introduction}

In \cite{MR2732989} an approach to a particular instance of the classification
of finite-dimensional Nichols algebras was initiated. Assume that $U$ is the
direct sum of two absolutely simple Yetter-Drinfeld modules $V$ and $W$ and
that $G$ is generated by the support of $U$. If the Nichols algebra of $U$ is
finite-dimensional, then the Weyl groupoid of $(V,W)$ is finite, and this
groupoid can be calculated, see \cite{MR2766176}, \cite{MR2390080},
\cite{MR2525553}, \cite{MR2734956} and \cite{MR2732989}. 
If the square of the braiding between $V$ and $W$
is the identity, then by a result of Gra\~na the Nichols algebra of $U$
is the tensor product of the Nichols algebras of $V$ and $W$. So we are
interested in the remaining cases. For a particular class of groups and
Yetter-Drinfeld modules, it was possible to construct and classify those $U$
with a finite-dimensional Nichols algebra.

A breakthrough for the approach in \cite{MR2732989} was achieved
in \cite[Thm.\,4.5]{partII}, where it was proved that if
the square of the braiding between $V$ and $W$ is not the identity and
$\NA(V\oplus W)$ is finite-dimensional, then $G$ is a non-abelian quotient of
one of four groups which can be described explicitly. Again, the main tool was
the theory of Weyl groupoids of tuples of simple Yetter-Drinfeld modules.

The theorem in \cite{partII} has very far reaching consequences. One of these
consequences is the existence of good bounds for the dimensions of $V$ and $W$,
see \cite[Cor.  4.6]{partII}.  Another consequence is the possibility to obtain
new examples of finite-dimensional Nichols algebras.  This can be done if the
Weyl groupoids (and of course the Nichols algebras) appearing in the context of
\cite{partII} are studied. 

In this work we study finite-dimensional Nichols algebras over two of the
groups appearing in \cite[Thm.\,4.5]{partII}. One of these groups is the
so-called group $T$, which is a certain central extension of the group
$\mathbf{SL}(2,3)$, see Section~\ref{subsection:T:preliminaries}.  The other is
$\Gamma_4$, a central extension of the dihedral group of eight elements, see
Section~\ref{subsection:G4:preliminaries}.  To study these
Nichols algebras we recognize some pairs $(V,W)$ of Yetter-Drinfeld modules
over non-abelian epimorphic images of $T$ and $\Gamma_4$ admitting a Cartan
matrix of finite type.  Then we determine when the Yetter-Drinfeld modules
$(\ad V)^m(W)$ and $(\ad W)^m(V)$ are absolutely simple or zero for all
$m\in\N$. In fact, around one third of the paper consists of these calculations,
which form the most technical but highly important part of this work. 
With the results of the calculations in the pocket,
we can compute the Cartan matrices, the reflections and the Weyl groupoid
of the pairs $(V,W)$.
This allows us to determine the
structure of Nichols algebras over non-abelian epimorphic images of $T$ and
$\Gamma_4$.

As a consequence, we will obtain
two (new families of) finite-dimensional Nichols algebras, see Theorems
\ref{thm:T} and \ref{thm:G4}.  One of these families of Nichols algebras has a
root system of type $G_2$ and dimension 
\[
	\begin{cases}
		6^3\,72^3 & \text{if $\mathrm{char}\,\K\ne2$,}\\
		3^3\,36^3 & \text{if $\mathrm{char}\,\K=2$}.
	\end{cases}
\]
The others have a root system of type $B_2$ and dimension
\[
    \begin{cases}
	    8^264^2 & \text{if $\mathrm{char}\,\K\ne2$},\\
   		4^264^2 & \text{if $\mathrm{char}\,\K=2$}.
	\end{cases}
\]

As a byproduct of our study of the Nichols algebras associated to epimorphic
images of $T$ and $\Gamma_4$ we improve the application given in \cite[Cor.
4.6]{partII}.  More precisely, under the assumptions of
\cite[Thm.\,4.5]{partII} we conclude that the support of the sum of the two
simple Yetter-Drinfeld modules is isomorphic (as a quandle) to one of the five
quandles listed in Theorem \ref{thm:main}. 

This work and the results of \cite{MR2732989} and \cite{partII} are an
important part of the classification of Nichols algebras admitting a finite
root system of rank two achieved in \cite{rank2}.

The paper is organized as follows. Section \ref{section:braiding} is devoted to
state some general facts about adjoint actions and braidings. In Section
\ref{section:basics:T} we review basic facts about the group $T$ and state the
main results concerning Nichols algebras over non-abelian epimorphic images of
$T$, see Proposition \ref{pro:deg=1} and Theorem \ref{thm:T}. These results are
proved in Sections \ref{section:deg=1} and \ref{section:T}.  In Section
\ref{section:basics:G4} we review the basic facts concerning the group
$\Gamma_4$ and state our main result about Nichols algebras over non-abelian
epimorphic images of $\Gamma_4$, see Theorem \ref{thm:G4}. This theorem is then
proved in Section \ref{section:G4}.  Finally, in Section
\ref{section:applications} the application mentioned in the previous paragraph
is deduced.

\section{Some preliminaries}
\label{section:braiding}

Fix a field $\K$. We use the notations and the definitions given in
\cite[Section 2.1]{partII} mostly without recalling them again.  However, we
recall the definition of the Cartan matrix of a pair of 
Yetter-Drinfeld modules.
Let $G$ be a group and let $V,W\in \ydG $.  If 
$(\ad V)^p(W)=0$ and $(\ad W)^{q}(V)=0$ 
for some $p,q\in\N_0$ 
then one defines
the \emph{Cartan matrix} $(a_{ij}^{M})\in\Z^{2\times2}$ of $M$ by
\begin{align*}
	&a_{11}^M=a_{22}^{M}=2,\\
	&a_{12}^M=-\sup\{m\in\N_0:(\ad V)^m(W)=0\},\\
	&a_{21}^M=-\sup\{m\in\N_0:(\ad W)^m(V)=0\}.
\end{align*}

In \cite{partII}, a sufficient criterion for the non-vanishing of 
\[
(\ad W)^m(V)\subseteq \NA (V\oplus W)
\]
for $m\in \N $ was formulated in terms of
elements of $G$ satisfying some properties. We can use this idea to obtain a
condition on the braiding of $V\oplus W$ under some assumptions on $(\ad
W)^m(V)$ and $(\ad W)^{m+1}(V)$ for some $m\in \N $. The following proposition
(and its proof) is analogous to \cite[Prop.\,5.5]{partII}. Before reading it,
we strongly recommend to read \cite[Prop.\,5.5]{partII} and its proof.

\begin{pro}
  \label{pro:degrees}
  Let $G$ be a group and let $V,W\in \ydG $. Let $m\in \N $, $i\in \{1,\dots ,m\}$,
  $r_1,\dots,r_m,p_1,\dots ,p_m\in \supp W$ and $s,p_{m+1}\in \supp V$. Assume that
  $(p_1,\dots ,p_{m+1})\in \supp Q_m(r_1,\dots ,r_m,s)$,
  $Q_{m+1}(p_i,r_1,\dots ,r_m,s)=0$, and that
  \begin{align}
    &p_{i+1}\trid p_i\ne p_i,\quad p_j\trid p_i=p_i
    \text{ for all $j$ with $i+1<j\le m+1$,}\\
    &p_i\notin \{p_j\,|\,1\le j\le m,j\not=i\} \cup
    \{ (p_{j+1}\cdots p_{m+1})^{-1}\trid p_j\,|\,1\le j<i\}.
  \end{align}
  Then $\dim W_{p_i}=1$ and $p_iw=-w$ for all $w\in W_{p_i}$.
\end{pro}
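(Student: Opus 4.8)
The plan is to unwind the hypothesis $Q_{m+1}(p_i,r_1,\dots,r_m,s)=0$ into the statement that the iterated adjoint action $(\ad w)(\ad w_1)\cdots(\ad w_m)(v)$ vanishes in $\NA(V\oplus W)$, where $w\in W_{p_i}$, $w_k\in W_{r_k}$ and $v\in V_s$, and then to exploit this vanishing by inspecting a single, carefully chosen multidegree. Writing $z=(\ad w_1)\cdots(\ad w_m)(v)$, the assumption $(p_1,\dots,p_{m+1})\in\supp Q_m(r_1,\dots,r_m,s)$ guarantees that $z$ has a nonzero homogeneous monomial $x_1\cdots x_{m+1}$ with $x_k$ of degree $p_k$; in particular $x_i\in W_{p_i}$. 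Since $(\ad w)(z)=wz-(p_i\cdot z)w$, the whole problem will be reduced to controlling, inside $\NA$, the two terms obtained by moving the outer factor $w$ through this distinguished monomial.

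The heart of the argument is a combinatorial isolation. I would single out the multidegree $\mathbf d=(p_1,\dots,p_{i-1},p_i,p_i,p_{i+1},\dots,p_{m+1})$, in which a second copy of $p_i$ sits immediately after the factor $x_i$. The hypotheses $p_j\trid p_i=p_i$ for $i+1<j\le m+1$ and $p_{i+1}\trid p_i\ne p_i$ are exactly what let $w$ be straightened into this position: it commutes cleanly past the mutually far factors $x_{i+2},\dots,x_{m+1}$, and it is blocked at $x_{i+1}$. The non-repetition hypothesis $p_i\notin\{p_j\}\cup\{(p_{j+1}\cdots p_{m+1})^{-1}\trid p_j\}$ then rules out every competing contribution to $\mathbf d$: any monomial reaching $\mathbf d$ from $wz$ or from $(p_i\cdot z)w$ in another way would force $p_i$ to coincide with some $p_j$ ($j\ne i$) or with a conjugate $(p_{j+1}\cdots p_{m+1})^{-1}\trid p_j$ ($j<i$), each of which is forbidden. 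Hence, up to a nonzero braiding scalar, the $\mathbf d$-component of $(\ad w)(z)$ is a single monomial whose two adjacent degree-$p_i$ letters are $w$ and $x_i$.

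Because $(\ad w)(z)=0$, this isolated component must vanish, and after stripping the unambiguous prefix $x_1\cdots x_{i-1}$ and suffix $x_{i+1}\cdots x_{m+1}$ (their degrees are pairwise distinct from $p_i$, so the corresponding skew-derivations of $\NA$ act as expected) the relation collapses to $w\,x_i=0$ in $\NA(V\oplus W)$; that is, the rank-one tensor $w\otimes x_i$ lies in the kernel of the quadratic quantum symmetrizer $\id+c$ on $W_{p_i}\otimes W_{p_i}$. Since $c(w\otimes x_i)=(p_i\cdot x_i)\otimes w$, comparing $(p_i\cdot x_i)\otimes w=-w\otimes x_i$ on the diagonal $w=x_i$ forces $p_i$ to act by $-1$, and then forces $w\otimes x_i=x_i\otimes w$ for all such vectors, which is possible only when $\dim W_{p_i}=1$. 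Letting $w$, and with it the $w_k$, $v$ and hence $x_i$, range over $W_{p_i}$ via the dependence of $Q$ on degrees alone yields both halves of the conclusion.

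The main obstacle I anticipate is the bookkeeping in the middle step: tracking all monomials produced by the full iterated adjoint action and proving rigorously that the conjugation and non-repetition hypotheses leave exactly one of them in multidegree $\mathbf d$. This is the same delicate enumeration carried out in \cite[Prop.\,5.5]{partII}, now arranged so as to read off a coefficient that must be zero rather than one that must be nonzero; the secondary technical point is to justify the passage from the free-algebra expansion of $(\ad w)(z)$ to the corresponding identity in $\NA$ and the clean stripping of the prefix and suffix.
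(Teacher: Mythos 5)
Your overall strategy is the paper's: distinguish the multidegree $\mathbf d=(p_1,\dots,p_{i-1},p_i,p_i,p_{i+1},\dots,p_{m+1})$ with a doubled $p_i$, use the straightening hypotheses and the non-repetition hypothesis to show $\mathbf d$ is reached in an essentially unique way, and conclude via the quadratic quantum symmetrizer on $W_{p_i}\otimes W_{p_i}$ that $\dim W_{p_i}=1$ and $p_iw=-w$; your final rank argument with simple tensors is exactly the paper's last step. But two linked steps in your middle section would fail as written. First, the isolation of the $\mathbf d$-component cannot be performed inside $\NA(V\oplus W)$: since the braiding maps $U_g\otimes U_h$ to $U_{g\trid h}\otimes U_g$, the quantum symmetrizer does not preserve tuple-degrees, so its kernel --- and hence each homogeneous component of $\NA$ --- is graded only by braid-orbits of tuples, not by individual tuples, and ``the $\mathbf d$-component of $(\ad w)(z)$'' has no meaning in the quotient. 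This is why the paper never unwinds $Q_{m+1}=0$ into a statement in $\NA$ at all: by construction $Q_{m+1}(p_i,r_1,\dots,r_m,s)=\varphi_{m+1}\bigl(W_{p_i}\otimes Q_m(r_1,\dots,r_m,s)\bigr)$ lives in the tensor space $W^{\otimes(m+1)}\otimes V$, where tuple components are honest direct summands, so the vanishing of $Q_{m+1}$ kills each tuple-component of $\varphi_{m+1}(u)$ separately. Staying there makes your entire ``secondary technical point'' --- the passage to $\NA$ and the stripping of the prefix and suffix by skew-derivations --- unnecessary, which is fortunate, because stripping a middle letter out of a long product in $\NA$ by skew-derivations would itself regenerate the full enumeration problem.

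Second, at the tensor level where the multidegree does make sense, your count is off: the $\mathbf d$-component is not a single monomial but a sum of exactly two terms, namely the $j=i$ and $j=i+1$ terms of the first family in \eqref{eq:suppQm+1} (for $j=i+1$ one uses $p_i\trid p_i=p_i$), which differ by one application of $c_{i,i+1}$. The hypotheses eliminate all competitors, but these two survive, and their sum is
$(\id+c_{i,i+1})c_{i-1\,i}\cdots c_{23}c_{12}(\id\otimes\pi_{p_1}\otimes\cdots\otimes\pi_{p_{m+1}})(u)$.
Claiming a single surviving monomial (``up to a nonzero braiding scalar'') tacitly presupposes that the two terms are proportional, i.e.\ that $W_{p_i}$ is one-dimensional with a scalar action of $p_i$ --- which is the conclusion, not a given. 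The two-term structure is precisely what hands you $(\id+c)(w\otimes w_0)=0$ on $W_{p_i}\otimes W_{p_i}$ directly, with no stripping; your formulation ``$w\,x_i=0$ in $\NA$'' is equivalent to this (degree-two multiplication in a Nichols algebra has kernel $\ker(\id+c)$), so once the middle is repaired by working with $\varphi_{m+1}$ in $W^{\otimes(m+1)}\otimes V$ as in \cite[Prop.\,5.5]{partII}, your argument becomes the paper's proof.
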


\begin{proof}
	By definition of $Q_{m+1}$, the set $\supp Q_{m+1}(p_i,r_1,\dots ,r_m,s)$
	consists of tuples of the form
  \begin{equation} \label{eq:suppQm+1}
    \begin{aligned}
      &(p_i\trid p'_1,\dots ,p_i\trid p'_{j-1},p_i,p'_j,\dots ,p'_{m+1}),\\
      &(p_i\trid p'_1,\dots ,p_i\trid p'_{j-1},p_ip'_j\cdots p'_{m+1}\trid p_i,
      p_i\trid p'_j,\dots ,p_i\trid p'_{m+1})
    \end{aligned}
  \end{equation}
	with $1\le j\le m+1$, where $(p'_1,\dots ,p'_{m+1})\in \supp Q_m(r_1,\dots
	,r_m,s)$.  This for $j=i$ and the assumption imply that \[
		(p_i\trid p_1,\dots,p_i\trid p_{i-1},p_i,p_i,p_{i+1},\dots ,p_{m+1})
	\]
	appears among the tuples
	in \eqref{eq:suppQm+1}. Comparing this tuple with all other possible tuples
	similarly to the proof of \cite[Prop.\,5.5]{partII}, one obtains that it
	appears precisely twice among the tuples in \eqref{eq:suppQm+1}: In the first
	line for $j=i$ and for $j=i+1$, where $p_k'=p_k$ for all $k\in \{1,2,\dots
	,m+1\}$.  The two tuples correspond to the summands
  $$c_{i-1\,i}\cdots c_{23}c_{12}(\id \otimes \pi _{p_1}\otimes \cdots \otimes \pi _{p_m}\otimes \pi _{p_{m+1}})
  (u)$$
  and
  $$c_{i\,i+1}\cdots c_{23}c_{12}(\id \otimes \pi _{p_1}\otimes \cdots \otimes \pi _{p_m}\otimes \pi _{p_{m+1}})
  (u) $$
  of $\varphi _{m+1}(u)$ for any $u\in W_{p_i}\otimes Q_m(r_1,\dots ,r_m,s)$.
  As $Q_{m+1}(p_i,r_1,\dots ,r_m,s)=0$ by assumption, we obtain that
  $$(\id +c_{i,i+1})
  c_{i-1\,i}\cdots c_{23}c_{12}(\id \otimes \pi _{p_1}\otimes \cdots \otimes \pi _{p_m}\otimes \pi _{p_{m+1}})(u)=0$$
  for all
  $u\in W_{p_i}\otimes Q_m(r_1,\dots ,r_m,s)$.
	Then there exists $w_0\in W_{p_i}\setminus \{0\}$ such that $(\id
	+c)(w\otimes w_0)=0$ for all $w\in W_{p_i}$.  Since $c(w\otimes
	w_0)=p_iw_0\otimes w$ for all $w\in W_{p_i}$, we conclude that $\dim
	W_{p_i}=1$ and $p_iw=-w$ for all $w\in W_{p_i}$.
\end{proof}

\begin{lem}
    \label{lem:c^2}
    Let $G$ be a group and let $V,W\in \ydG$. Assume that there exist
    $q_V,q_W\in\K$ such that
    $xv=q_Vv$ and $yw=q_Ww$ 
    for all $x,y\in G$, $v\in V_x$, $w\in W_y$. 
    Then 
    \[
        c_{W,V}c_{V,W}(v\otimes w)=q_V^{-1}xy v\otimes q_W^{-1}xy w
    \]
    for all $x,y\in G$, $v\in V_x$, $w\in W_y$. 
\end{lem}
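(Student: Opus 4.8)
The plan is to compute both tensor factors of $c_{W,V}c_{V,W}(v\otimes w)$ directly from the explicit formula for the braiding in $\ydG$, and then to simplify each factor using the two scalar hypotheses together with associativity of the $G$-action. Recall that for homogeneous $v\in V_x$ the braiding is given by $c_{V,W}(v\otimes w)=(xw)\otimes v$, and that the Yetter--Drinfeld compatibility forces $xw\in W_{xyx^{-1}}$ whenever $w\in W_y$. These two facts are all the structural input that is needed.

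First I would apply $c_{V,W}$ to $v\otimes w$ to obtain $(xw)\otimes v$. Since $xw$ is homogeneous of degree $xyx^{-1}$, a second application, now of $c_{W,V}$, gives
\[
c_{W,V}c_{V,W}(v\otimes w)=\bigl((xyx^{-1})v\bigr)\otimes (xw),
\]
which lies in $V\otimes W$ as it should. This is the only genuinely structural step; everything that follows is bookkeeping with the $G$-action.

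Next I would rewrite each factor of this display using the hypotheses $xv=q_Vv$ and $yw=q_Ww$. For the second factor, $(xy)w=x(yw)=x(q_Ww)=q_W(xw)$, so that $xw=q_W^{-1}(xy)w$. For the first factor, $(xy)v=(xyx^{-1})(xv)=(xyx^{-1})(q_Vv)=q_V\,(xyx^{-1})v$, so that $(xyx^{-1})v=q_V^{-1}(xy)v$. Substituting both identities yields precisely $q_V^{-1}(xy)v\otimes q_W^{-1}(xy)w$, as claimed. The computation carries no real obstacle; the only point requiring care is that the hypotheses control the action of $x$ on $V_x$ and of $y$ on $W_y$, whereas the braiding produces the conjugated element $xyx^{-1}$ acting on $v$. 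The resolution is simply to factor $xy=(xyx^{-1})\cdot x$ in the first factor and $xy=x\cdot y$ in the second, so that in each case the relevant scalar hypothesis becomes applicable.
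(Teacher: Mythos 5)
Your proof is correct and follows essentially the same route as the paper: both compute $c_{W,V}c_{V,W}(v\otimes w)=(xyx^{-1})v\otimes xw$ directly from the braiding and then rewrite the two factors using the scalar hypotheses, obtaining $xw=q_W^{-1}(xy)w$ and $(xyx^{-1})v=q_V^{-1}(xy)v$. The only cosmetic difference is that you factor $xy=(xyx^{-1})\cdot x$ in the first tensor factor, whereas the paper equivalently writes $xyx^{-1}v=xy(x^{-1}v)$ and uses $x^{-1}v=q_V^{-1}v$.
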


\begin{proof}
    A direct computation yields 
    \[
        c_{W,V}c_{V,W}(v\otimes w)=c_{W,V}(xw\otimes v)=xyx^{-1}v\otimes xw.
    \]
    Since $x^{-1}v=q_V^{-1}v$ and $xyw=q_Wxw$, the lemma
    follows. 
\end{proof}

One of the key step towards our main result depends on the
calculation of the Yetter-Drinfeld modules $(\ad V)^m(W)$ and $(\ad W)^m(V)$
for some Yetter-Drinfeld modules $V$ and $W$ and for all $m\in\N$.  
For that purpose, the following lemma is useful.

\begin{lem}{\cite[Thm.~1.1]{MR2732989}}
  \label{lem:X_n}
  Let $V$ and $W$ be Yetter-Drinfeld modules over a Hopf algebra $H$ with
  bijective antipode.  Let $\varphi _0=0$ and  
  $\varphi_m\in\End (V^{\otimes m}\otimes W)$ be given by 
  \begin{align*}
		\varphi_m &= \id-c_{V^{\otimes(m-1)}\otimes W,V}\,c_{V,V^{\otimes(m-1)}
		\otimes W}+(\id\otimes\varphi_{m-1})c_{1,2}
  \end{align*}
  for all $m\geq1$, and let  
  $X_0^{V,W}=W$, and 
  \[
  	X_m^{V,W} = \varphi_m(V\otimes X_{m-1})\subseteq V^{\otimes m}\otimes W
  \]
  for all $m\geq1$.
  Then $(\ad V)^n(W)\simeq X_n^{V,W}$ for all $n\in \N _0$.
\end{lem}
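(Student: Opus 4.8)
The plan is to realize each $(\ad V)^n(W)$ inside $R:=\NA(V\oplus W)$ as the image of an explicit Yetter--Drinfeld morphism coming from the iterated braided adjoint action, and then to identify that image with $X_n^{V,W}$ by comparing the leading terms of the braided coproduct. Recall that for $v$ in the degree-one part $V\subseteq R$ and $r\in R$ the braided adjoint action is $(\ad v)(r)=vr-\mu\,c_{V,R}(v\otimes r)$, where $\mu$ is the multiplication of $R$; this is a Yetter--Drinfeld morphism in the variable $v$. Iterating it, set $\Phi_0$ to be the inclusion $W\hookrightarrow R$ and let $\Phi_m\colon V^{\otimes m}\otimes W\to R$ be obtained by applying the adjoint action once after $\id_V\otimes\Phi_{m-1}$; then $\Phi_m$ is a Yetter--Drinfeld morphism whose image is exactly $(\ad V)^m(W)$. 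The statement reduces to exhibiting a Yetter--Drinfeld isomorphism between this image and $X_m=\varphi_m(V\otimes X_{m-1})$.

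First I would introduce the Yetter--Drinfeld morphism $P_m\colon R_{(m,1)}\to V^{\otimes m}\otimes W$ obtained from the $m$-fold iterated coproduct of $R$ by projecting onto the degree-one part $(V\oplus W)^{\otimes(m+1)}$ of $R^{\otimes(m+1)}$ and then onto its $V^{\otimes m}\otimes W$ summand. The heart of the argument is the inductive identity
\[
P_m\bigl((\ad v)(u)\bigr)=\varphi_m\bigl(v\otimes P_{m-1}(u)\bigr)
\]
for all $v\in V$ and $u\in(\ad V)^{m-1}(W)$. To prove it I would expand the braided coproduct of $(\ad v)(u)=vu-\mu\,c_{V,R}(v\otimes u)$ and read off the $V^{\otimes m}\otimes W$ component. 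The braiding term contributes the monodromy $-c_{V^{\otimes(m-1)}\otimes W,V}\,c_{V,V^{\otimes(m-1)}\otimes W}$ applied to $v\otimes P_{m-1}(u)$, exactly as in the case $m=1$, while the product term $vu$ contributes the identity together with the recursive summand $(\id\otimes\varphi_{m-1})\,c_{1,2}$, the latter arising from those coproduct components in which $v$ is braided past the first tensor leg and the inner adjoint structure of $u$ is processed by the inductive hypothesis for $P_{m-1}$. These are precisely the three summands defining $\varphi_m$. Since $P_{m-1}$ maps $(\ad V)^{m-1}(W)$ onto $X_{m-1}$ by induction, the identity shows that $P_m$ maps $(\ad V)^m(W)$ onto $\varphi_m(V\otimes X_{m-1})=X_m$.

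It then remains to show that the surjection $P_m\colon(\ad V)^m(W)\to X_m$ is injective, and this is the step I expect to be the main obstacle: one must prove that an element of $(\ad V)^m(W)$ is determined by the leading $V^{\otimes m}\otimes W$ component of its coproduct. Since the full coproduct projection into $(V\oplus W)^{\otimes(m+1)}$ is injective on $R_{(m,1)}$, the task is to control the remaining components $V^{\otimes a}\otimes W\otimes V^{\otimes b}$ with $a<m$; I would handle these by an induction that computes the entire braided coproduct of $(\ad v)(u)$ and then uses the defining relations of the Nichols algebra (the kernel of the braided symmetrizer in the relevant bidegree) to express the lower components in terms of the leading one, so that they vanish together with it. The base case $m=1$ is the explicit degree-two computation: one checks directly, using the quadratic relations of $R$, that $\ker\Phi_1=\ker(\id-c_{W,V}c_{V,W})=\ker\varphi_1$, which is the template for the inductive step. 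Once injectivity is established, $P_m$ is a Yetter--Drinfeld isomorphism $(\ad V)^m(W)\to X_m$, and induction on $m$, with base case $X_0=W$, completes the proof.
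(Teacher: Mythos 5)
The paper does not actually prove this lemma: it is imported verbatim from \cite[Thm.~1.1]{MR2732989}, so there is no in-paper proof to compare against, and your argument has to be judged on its own terms. Judged so, the outline is sound and is in substance the standard argument for that theorem. Your central identity $P_m\bigl((\ad v)(u)\bigr)=\varphi_m\bigl(v\otimes P_{m-1}(u)\bigr)$ is correct, and in fact holds for every $u$ of bidegree $(m-1,1)$ with no hypothesis beyond the grading: for the leading pattern $V^{\otimes m}\otimes W$ the $W$-leg must occupy the last slot, which is necessarily the last leg of $u$, so only the leading component $P_{m-1}(u)$ can contribute. One bookkeeping slip: after unfolding the recursion for $\varphi_m$, the product $vu$ contributes exactly the positive braid-insertion terms, while \emph{all} monodromy terms --- including those hidden inside $(\id\otimes\varphi_{m-1})c_{1,2}$ --- come from the subtracted summand $\mu\,c(v\otimes u)$; your attribution of $(\id\otimes\varphi_{m-1})c_{1,2}$ wholly to $vu$ is not exact, though harmless once the computation is written out.

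The step you yourself flag as the main obstacle, injectivity of $P_m$ on $(\ad V)^m(W)$, is where your sketch points in a slightly wrong direction: no defining relations of $\NA(V\oplus W)$ in bidegree $(m,1)$ are needed to control the components $V^{\otimes a}\otimes W\otimes V^{\otimes b}$ with $b\geq 1$, and one should not expect to ``express'' them in terms of the leading one --- they vanish identically on $(\ad V)^m(W)$. The clean mechanism is coinvariance. Let $\pi\colon\NA(V\oplus W)\to\NA(V)$ be the Hopf algebra projection annihilating $W$; then $(\id\otimes\pi)\Delta(w)=w\otimes 1$ for all $w\in W$, and a short computation with $\Delta\bigl((\ad v)(x)\bigr)=\Delta(vx)-\Delta\bigl((v_{(-1)}x)v_{(0)}\bigr)$ shows that $\ad V$ preserves right coinvariants, so $(\ad V)^m(W)$ lies in the coinvariants of $\pi$ and every trailing component of its iterated coproduct vanishes. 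Combined with the injectivity of the full degree-one projection on the bidegree-$(m,1)$ component (strict gradedness of Nichols algebras, which you correctly invoke), this gives injectivity of $P_m$ on $(\ad V)^m(W)$, and your induction closes. With that substitution --- coinvariance in place of relations-in-bidegree-$(m,1)$ --- the proposal is a complete proof, along the same lines as the cited source.
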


In the paper, we will use the Yetter-Drinfeld modules $X_m^{V,W}$ for
calculations, but in the applications we usually turn back to the more
suggestive module $(\ad V)^m(W)$.

The basic theory of Weyl groupoids and Nichols algebras of
\cite{MR2766176} and \cite{MR2734956} is reviewed in 
\cite[Section 2]{MR2732989}.

\section{Nichols algebras over epimorphic images of $T$}
\label{section:basics:T}

\subsection{Preliminaries}
\label{subsection:T:preliminaries}

Recall that the group $T$ is 
\[
    T=\langle\zeta\rangle\times\langle\chi_1,\chi_2,\chi_3,\chi_4\mid\chi_i\chi_j=\chi_{i\triangleright j}\chi_i,\quad i,j\in\{1,2,3,4\}\rangle,
\]    
where $\triangleright$ is defined by
\begin{center}
	\begin{tabular}{c|cccc}
		$\triangleright$ & $1$ & $2$ & $3$ & $4$\tabularnewline
		\hline
		$1$ & $1$ & $4$ & $2$ & $3$\tabularnewline
		$2$ & $3$ & $2$ & $4$ & $1$\tabularnewline
		$3$ & $4$ & $1$ & $3$ & $2$\tabularnewline
		$4$ & $2$ & $3$ & $1$ & $4$\tabularnewline
	\end{tabular}
\end{center}
The table describes the structure of the quandle associated to the vertices of
the tetrahedron,
see \cite[\S1]{MR1994219}.  By \cite[Lemmas 2.17 and 2.18]{MR2803792}, 
\begin{equation}
	\label{eq:T_x13=x23=x33=x34}
	\chi_1^3=\chi_2^3=\chi_3^3=\chi_4^3
\end{equation}
is a central element of $T$. Moreover, the center of $T$ is $Z(T)=\langle
\chi_1^3,\chi_1\chi_2\chi_3, \zeta\rangle$. 

The group $T$ can be presented by generators $\chi_1$,
$\chi_2$, $\zeta$ with relations 
\begin{equation}
    \label{eq:presentation}
	\begin{gathered}
		\zeta\chi_1=\chi_1\zeta,\quad 
		\zeta\chi_2=\chi_2\zeta,\quad 
		\chi_1\chi_2\chi_1=\chi_2\chi_1\chi_2,\quad
		\chi_1^3=\chi_2^3.
	\end{gathered}
\end{equation}
Then $\chi_3=\chi_2\chi_1\chi_2^{-1}$ and $\chi_4=\chi_1\chi_2\chi_1^{-1}$ in
$T$, and the elements $\chi_1,\chi_2,\chi_3,\chi_4$ form a conjugacy class of
$T$.  The group $T$ is isomorphic to the enveloping group of the quandle
$\chi_1^T\cup\zeta^T$. 

\begin{rem}
	\label{rem:T+1}
	The $\chi_1^T\cup\zeta^T$ is the disjoint union of the trivial quandle with one
	element and the quandle associated to the vertices of the tetrahedron.
\end{rem}

In what follows, let $G$ be a non-abelian quotient of the group $T$.
Equivalently, the elements $\chi_i$, $1\le i\le 4$, represent pairwise different
elements of $G$.  Let $z\in Z(G)$ and $x_1\in G$ such that $G=\langle
z,x_1^G\rangle$ and there exists a quandle isomorphism
$f\colon\chi_1^T\cup\{\zeta\}\to x_1^G\cup\{z\}$ with $f(\chi_1)=x_1$.  For all
$2\leq i\leq4$ let $x_i\coloneqq f(\chi_i)$.  The quandle isomorphism $f$ induces a
surjective group homomorphism $T\to G$. 

\begin{lem}
    \label{lem:centralizers}
    The following hold:
    \begin{enumerate}
        \item $G^z=G$.
				\item $G^{x_1}=\langle x_1,x_2x_3,z\rangle$.
    \end{enumerate}
\end{lem}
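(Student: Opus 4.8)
The plan is to treat the two parts separately, with (1) being immediate from the hypotheses. Since by assumption $z\in Z(G)$, every element of $G$ commutes with $z$, so $G^z=G$.

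For part (2) I would first prove the inclusion $\langle x_1,x_2x_3,z\rangle\subseteq G^{x_1}$ by checking the three generators. Trivially $x_1$ commutes with itself and $z\in Z(G)$ commutes with $x_1$, so the only genuine computation is that $x_2x_3$ commutes with $x_1$. Using the quandle relation $x_ix_j=x_{i\trid j}x_i$ (equivalently $x_ix_jx_i^{-1}=x_{i\trid j}$) and the table, I get $x_1(x_2x_3)x_1^{-1}=x_{1\trid 2}\,x_{1\trid 3}=x_4x_2$; since $x_4x_2=x_{2\trid 3}x_2=x_2x_3$ (as $2\trid 3=4$), this equals $x_2x_3$, as desired. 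I would record along the way the useful identity $x_4x_2=x_2x_3$.

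For the reverse inclusion I would argue by index. Conjugation by each $x_j$ permutes $\{x_1,x_2,x_3,x_4\}$ according to the quandle and acts transitively (e.g.\ $2\trid 1=3$, $3\trid 1=4$, $4\trid 1=2$), while $z$ is central; hence the conjugacy class of $x_1$ in $G$ is exactly $\{x_1,x_2,x_3,x_4\}$, which has four elements because the $x_i$ are pairwise distinct. Thus $[G:G^{x_1}]=4$. Writing $H=\langle x_1,x_2x_3,z\rangle\subseteq G^{x_1}$, it then suffices to prove $[G:H]\le 4$, which forces $[G:H]=4$ and therefore $H=G^{x_1}$. To bound the index I would show that the four left cosets $H,\ x_2H,\ x_3H,\ x_4H$ are permuted by left multiplication by each generator of $G$. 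Left multiplication by $z^{\pm1}$ acts trivially (as $z\in H$ is central) and by $x_1$ permutes the cosets using only $x_1x_j=x_{1\trid j}x_1$; the products $x_i\cdot x_jH$ with $i\ne j$ reduce directly via the quandle relation together with $x_2x_3\in H$ (which also gives $x_2^{-1}H=x_3H$ and $x_4x_2=x_2x_3\in H$). The essential inputs are the "diagonal" products $x_i\cdot x_iH$: here I would invoke the central cube relation \eqref{eq:T_x13=x23=x33=x34}, i.e.\ $x_1^3=x_2^3=x_3^3=x_4^3\in Z(G)$, to write $x_i^2=x_1^3x_i^{-1}$, so that for instance $x_2\cdot x_2H=x_1^3x_2^{-1}H=x_2^{-1}H=x_3H$, and analogously for $i=3,4$. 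Since each generator induces a permutation of these four cosets and $1\in H$, their union is a $G$-stable set containing $1$ and hence equals $G$, giving $[G:H]\le 4$.

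The main obstacle is precisely this coset bookkeeping for the reverse inclusion: one must verify that left multiplication by every $x_i$ preserves the four-element set of cosets. The only non-formal ingredients are the centrality of the common cube $x_1^3$ and the membership $x_2x_3\in H$, which together let one rewrite the diagonal products $x_i^2$ as $x_1^3x_i^{-1}$ and thereby close the finite check.
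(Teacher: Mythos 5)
Your proof is correct, and it reaches the paper's conclusion by a more self-contained route. The paper's own proof has the same skeleton---containment plus an index count---but outsources both ingredients to a citation: it quotes \cite[Lemma 5.5]{MR2803792} for $T^{\chi_1}=\langle\chi_1,\chi_2\chi_3,\zeta\rangle$, which simultaneously yields the inclusion $\langle x_1,x_2x_3,z\rangle\subseteq G^{x_1}$ (push the generators through $T\to G$) and the bound $[G:\langle x_1,x_2x_3,z\rangle]\le 4$ (the image of an index-four subgroup of $T$ has index at most four in $G$), and then, exactly as you do, concludes from $|x_1^G|=4$ that $[G:G^{x_1}]=4$, forcing equality. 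You instead reprove both facts inside $G$: the inclusion via the direct computation $x_1(x_2x_3)x_1^{-1}=x_{1\trid 2}x_{1\trid 3}=x_4x_2=x_2x_3$, and the index bound via a Todd--Coxeter-style enumeration showing that $H$, $x_2H$, $x_3H$, $x_4H$ are permuted by every generator. I checked that your enumeration closes: the off-diagonal products reduce by the quandle relation alone (e.g.\ $x_1\cdot x_2H=x_4x_1H=x_4H$, $x_3\cdot x_4H=x_2x_3H=H$), and the delicate diagonal entries work as you indicate, namely $x_2\cdot x_2H=x_1^3x_2^{-1}H=x_2^{-1}H=x_3H$ since $x_2x_3\in H$, $x_3\cdot x_3H=x_3^{-1}H=(x_3^{-1}x_2x_3)H=x_4H$, and $x_4\cdot x_4H=x_4^{-1}H=x_2H$ since $x_4x_2=x_2x_3\in H$, all using the central relation \eqref{eq:T_x13=x23=x33=x34}. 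The trade-off: the paper's argument is two lines modulo the external structure result for $T$; yours costs some finite coset bookkeeping but is self-contained in the quotient $G$ and makes explicit which relations of $T$ (the quandle relations, the common central cube, and the identity $x_4x_2=x_2x_3$) actually pin down the centralizer.
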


\begin{proof}
    The first claim is trivial since $z$ is central. Let us prove (2). Since
    $T^{\chi_1}=\langle\chi_1,\chi_2\chi_3,\zeta\rangle$ by \cite[Lemma
    5.5]{MR2803792}, we obtain that $\langle x_1,x_2x_3,z\rangle\subseteq
		G^{x_1}$. Moreover, $T^{\chi_1}$ has index four in $T$. From $|x_1^G|=4$
		we conclude that $G^{x_1}$ has index four in $G$. Hence $G^{x_1}=\langle
    x_1,x_2x_3,z\rangle$.
\end{proof}

\begin{lem}
	\label{lem:sigma(x1)=-1}
    Let $V,W\in\ydG$ such that $\supp V=z^G$, $\supp W=x_1^G$, and
    $(\ad V)(W)\ne0$. Then $(\ad W)^3(V)\not=0$.
    If $(\ad W)^3(V)$ is irreducible, then $\dim W=4$ and
    $x_1w=-w$ for all $w\in W_{x_1}$.
\end{lem}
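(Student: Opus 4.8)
The plan is to prove the two assertions separately, using the non-vanishing criterion of \cite{partII} for $(\ad W)^3(V)\ne0$ and Proposition \ref{pro:degrees} for the structural statement. First I would record the basic setup. Since $z\in Z(G)$ we have $z^G=\{z\}$, so $V=V_z$, while $\supp W=x_1^G=\{x_1,x_2,x_3,x_4\}$ is a single conjugacy class on which $G$ acts transitively by conjugation. Because conjugation by $g\in G$ maps $W_{x_i}$ isomorphically onto $W_{gx_ig^{-1}}$, all four homogeneous components have equal dimension, so $\dim W=4\dim W_{x_1}$. Hence the second assertion reduces to $\dim W_{x_1}=1$ and $x_1w=-w$ for $w\in W_{x_1}$, which is exactly the conclusion of Proposition \ref{pro:degrees} with $p_i=x_1$.

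For $(\ad W)^3(V)\ne0$ I would first deduce $(\ad W)(V)\ne0$ from $(\ad V)(W)\ne0$. In the bidegree with one factor from $V$ and one from $W$, the degree-two part of $\NA(V\oplus W)$ is the quotient of $(V\otimes W)\oplus(W\otimes V)$ by $\ker(\id+c)$. A short computation shows $(\ad v)(w)=(\id-c_{V,W})(v\otimes w)$ maps to $0$ iff $(\id-c_{W,V}c_{V,W})(v\otimes w)=0$; thus $(\ad V)(W)=0$ is equivalent to $c_{W,V}c_{V,W}=\id$ on $V\otimes W$, and likewise $(\ad W)(V)=0$ is equivalent to $c_{V,W}c_{W,V}=\id$ on $W\otimes V$. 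Since $c_{V,W}$ intertwines these two operators and is invertible, the two vanishing conditions are equivalent, so $(\ad V)(W)\ne0$ forces $(\ad W)(V)\ne0$. From this base case I would bootstrap to length three by exhibiting an explicit chain $r_1,r_2,r_3\in\supp W$ adapted to the central element $z$ and invoking the non-vanishing criterion of \cite{partII}: using the tetrahedron quandle table one checks that the successive conjugations remain nontrivial, so that the relevant support tuple of $Q_3(r_1,r_2,r_3,z)$ survives and $(\ad W)^3(V)\ne0$.

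For the structural statement I would apply Proposition \ref{pro:degrees} with $m=3$ and $s=p_4=z$, the only element of $\supp V$. As $p_4=z$ is central we have $p_4\trid p_3=p_3$, so the index $i$ with $p_i=x_1$ cannot be $m=3$; and $i=1$ would force $p_3=x_1=p_1$, contradicting the distinctness condition. Thus $i=2$, and I would take $(p_1,p_2,p_3,p_4)=(x_2,x_1,x_3,z)$: the quandle table gives $x_3\trid x_1=x_4\ne x_1$, the only later entry $z$ commutes with $x_1$, and $(x_1x_3z)^{-1}\trid x_2=x_3\ne x_1$, so all hypotheses on the $p_j$ hold. It then remains to check that $(x_2,x_1,x_3,z)\in\supp Q_3(r_1,r_2,r_3,z)$ for a suitable choice of $r_1,r_2,r_3$, supplied by the computation of the previous paragraph, and that $Q_4(x_1,r_1,r_2,r_3,z)=0$. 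Granting these, Proposition \ref{pro:degrees} yields $\dim W_{x_1}=1$ and $x_1w=-w$, whence $\dim W=4$.

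The main obstacle is the vanishing $Q_4(x_1,r_1,r_2,r_3,z)=0$, and this is where irreducibility enters. By Lemma \ref{lem:X_n} the sought component is the image of $W_{x_1}\otimes Q_3(r_1,r_2,r_3,z)$ under $\varphi_4$ inside $(\ad W)^4(V)\simeq X_4^{W,V}$, and by the same mechanism as in the second paragraph (applied one level up) it vanishes precisely when the square of the braiding between $W_{x_1}$ and the component $Q_3(r_1,r_2,r_3,z)$ of $(\ad W)^3(V)$ acts as the identity there. Here I would use that a simple $(\ad W)^3(V)$ is supported on a single conjugacy class with a computable self-braiding, so that this braiding can be pinned down in the relevant degree and shown to square to $\id$. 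Establishing that triviality from the simple structure of $(\ad W)^3(V)$ is the technical heart of the argument; the remaining verifications are routine bookkeeping with the tetrahedron quandle table.
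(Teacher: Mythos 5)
Your reduction of the second claim to $\dim W_{x_1}=1$, the equivalence $(\ad V)(W)=0\iff c_{W,V}c_{V,W}=\id_{V\otimes W}$, and your verification of the combinatorial hypotheses of Proposition~\ref{pro:degrees} for the tuple $(p_1,p_2,p_3,p_4)=(x_2,x_1,x_3,z)$ are all fine. The genuine gap is in your choice $m=3$: Proposition~\ref{pro:degrees} then demands the vanishing $Q_4(x_1,r_1,r_2,r_3,z)=0$, which is a statement about $(\ad W)^4(V)$, and the lemma's hypotheses give no control whatsoever at level four --- irreducibility is assumed only for $(\ad W)^3(V)$. Your proposed mechanism for producing this vanishing is also incorrect: the claim that the component of $X_4^{W,V}$ ``vanishes precisely when the square of the braiding acts as the identity'' is valid only for $\varphi_1=\id-c^2$; for $m\geq 1$ one has $\varphi_{m+1}=\id-c_{X_m,W}c_{W,X_m}+(\id\otimes\varphi_m)c_{1,2}$, and the third summand destroys the equivalence. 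Indeed, in the situation of Theorem~\ref{thm:T} one has $(\ad W)^4(V)=0$ while the braiding square acts by the scalar $\rho(x_1)^2\sigma(z)^2\neq 1$; the vanishing in Lemma~\ref{lem:Y4} comes from the relation $1+\rho(x_1)^2\sigma(z)^2-\rho(x_1)\sigma(z)=0$, not from triviality of $c^2$. So the ``technical heart'' you defer cannot be carried out from the stated hypotheses.

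The paper sidesteps this by taking $m=2$, so that the required vanishing sits at level three, exactly where the irreducibility hypothesis bites, and it obtains that vanishing by a support argument you are missing: the chain $(x_4,z)$, $(x_1,x_4,z)$, $(x_2\trid x_1,x_2,x_4,z)\in\supp Q_3(x_2,x_1,x_4,z)$ shows that $(\ad W)^3(V)\neq 0$ in the \emph{central} degree $x_2x_1x_4z=x_1x_2x_3z$, whereas $Q_3(x_1,x_1,x_4,z)$ sits in the \emph{non-central} degree $x_1^2x_4z$. Since an irreducible Yetter-Drinfeld module is supported on a single conjugacy class, these two degrees cannot both occur, forcing $Q_3(x_1,x_1,x_4,z)=0$. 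Proposition~\ref{pro:degrees} with $m=2$, $i=1$, $(p_1,p_2,p_3)=(r_1,r_2,s)=(x_1,x_4,z)$ then yields $\dim W_{x_1}=1$ and $x_1w=-w$ directly. This comparison of a central with a non-central degree inside the same module $(\ad W)^3(V)$ is the missing idea; without it (or some substitute at level four, which does not exist under the given hypotheses), your argument does not close.
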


\begin{proof}
    First, $(x_4,z)\in \supp Q_1(x_4,z)$ since $c_{V,W}c_{W,V}\ne \id$.
    Therefore \cite[Prop.\,5.5]{partII} implies that
    $(x_1,x_4,z)\in\supp Q_2(x_1,x_4,z)$.
    Since \[
        x_2\not\in\{x_1,x_4,(x_4z)^{-1}\triangleright x_1=x_3\},
    \]
    \cite[Prop.\,5.5]{partII} with $i=2$ yields
    \[
        (x_2\triangleright x_1,x_2,x_4,z)\in\supp Q_3(x_2,x_1,x_4,z). 
    \]
    In particular, $(\ad W)^3(V)\not=0$. It is easy to check that
    $x_2x_1x_4=x_1x_2x_3$,
		and hence the non-central element $x_1x_1x_4$
		is not conjugate to the central element $x_2x_1x_4$
    in $G$. Therefore $Q_3(x_1,x_1,x_4,z)=0$ by the irreducibility of $(\ad
    W)^3(V)$, and hence Proposition~\ref{pro:degrees} with the parameters
		$m=2$, $i=1$, and
    $(p_1,p_2,p_3)=(r_1,r_2,s)=(x_1,x_4,z)$ yields the claim.
\end{proof}

Let $W=M(x_1,\sigma)$ be a Yetter-Drinfeld module over $G$ for some
absolutely irreducible representation $\sigma$ of $G^{x_1}$.  The centralizer
$G^{x_1}=\langle x_1,x_2x_3,z\rangle$ is abelian and hence $\deg\sigma=1$.  Let
$\epsilon =\sigma (x_2x_3)$.

\begin{rem} \label{rem:G_on_generalW}
    Let $w_1\in W_{x_1}$ such that $w_1\ne0$. Then $w_1$,
    $w_2\coloneqq\sigma(x_1)^{-1}x_4w_1$, $w_3\coloneqq\sigma(x_1)^{-1}x_2w_1$,
    $w_4\coloneqq\sigma(x_1)^{-1}x_3w_1$ is a basis of $W$. The degrees of these
    vectors are $x_1$, $x_2$, $x_3$ and $x_4$, respectively. Furthermore, 
    $x_iw_j=q_{ij}w_{i\triangleright j}$, where 
	\[
	q_{ij}=\left(\begin{array}{cccc}
		\sigma(x_{1}) & \sigma(x_{1}) & \sigma(x_{1}) & \sigma(x_{1})\\
    \sigma(x_{1}) & \sigma(x_{1}) & \sigma(x_{1})^{3}\epsilon ^{-1}& \sigma(x_{1})^{-1}\epsilon\\
		\sigma(x_{1}) & \sigma(x_{1})^{-1}\epsilon & \sigma(x_{1}) &
    \sigma(x_{1})^{3}\epsilon^{-1}\\
    \sigma(x_{1}) & \sigma(x_{1})^{3}\epsilon^{-1} & \sigma(x_{1})^{-1}\epsilon & \sigma(x_{1})
	\end{array}\right).
	\]
    For example, one can easily compute that 
    \[
        x_2w_4=\sigma(x_1)^{-1}x_2x_3w_1=\sigma(x_1)^{-1}\epsilon w_1.
    \]
	Then, since $x_1^3=x_2^3$ is central, 
    \begin{align*}
        x_2w_3 &= \sigma(x_1)^{-1}x_2x_2w_1=\sigma(x_1)^{-1}x_2^{-1}x_2^3w_1\\
        &=\sigma(x_1)^{-1}x_2^{-1}x_1^3w_1=\sigma(x_1)^2x_2^{-1}w_1
        =\sigma(x_1)^3\epsilon ^{-1}w_4.
    \end{align*}
\end{rem}

\begin{rem}
  \label{rem:G_on_W}
  Assume that $\sigma (x_1)=-1$. 
  Since $x_1^4=(x_2x_3)^2$, we obtain that $\epsilon ^2=1$.  
    Then the action of $G$ on $W$ is given by the following table:
	\begin{center}
		\begin{tabular}{c|cccc}
			$W$ & $w_{1}$ & $w_{2}$ & $w_{3}$ & $w_{4}$\tabularnewline
			\hline
			$x_{1}$ & $-w_{1}$ & $-w_{4}$ & $-w_{2}$ & $-w_{3}$\tabularnewline
			$x_{2}$ & $-w_{3}$ & $-w_{2}$ & $-\epsilon w_{4}$ & $-\epsilon w_{1}$\tabularnewline
			$x_{3}$ & $-w_{4}$ & $-\epsilon w_{1}$ & $-w_{3}$ & $-\epsilon w_{2}$\tabularnewline
			$x_{4}$ & $-w_{2}$ & $-\epsilon w_{3}$ & $-\epsilon w_{1}$ & $-w_{4}$\tabularnewline
			$z$ & $\sigma(z) w_1$ & $\sigma(z) w_2$ & $\sigma(z) w_3$ & $\sigma(z) w_4$\tabularnewline 
		\end{tabular}
	\end{center}
\end{rem}

Let $V=M(z,\rho)$ for some absolutely irreducible representation $\rho$ of the
centralizer $G^z=G$. The following lemma tells us that we will only need to
study those representations of degree at most two.

\begin{lem}
	\label{lem:degrees}
	Assume that $(\ad W)(V)\subseteq \NA (V\oplus W)$ and $(\ad W)^2(V)$ are
	absolutely simple Yetter-Drinfeld modules over $G$. Then $\dim V\leq2$.
\end{lem}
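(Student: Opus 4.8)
The plan is to reduce the statement to a linear-algebra fact about $\rho$ and then exploit the absolute simplicity of $(\ad W)(V)$. Since $z\in Z(G)$, its conjugacy class is $\{z\}$, so $\supp V=\{z\}$ and $V=V_z$; as $\rho$ is absolutely irreducible and $z$ is central, $\rho(z)=q_V\id$ is a scalar and $\dim V=\deg\rho=:d$, so the goal is $d\le2$. First I would compute $(\ad W)(V)$ explicitly via Lemma~\ref{lem:X_n}, i.e.\ $X_1^{W,V}=\varphi_1(W\otimes V)$ with $\varphi_1=\id-c_{V,W}c_{W,V}$. Evaluating the Yetter--Drinfeld braiding (as in Lemma~\ref{lem:c^2}, using that $z$ acts on $W$ by the scalar $\sigma(z)$ since $z$ is central) gives $c_{V,W}c_{W,V}(w\otimes v)=\sigma(z)\,w\otimes x_iv$ for $w\in W_{x_i}$ and $v\in V$, hence $\varphi_1(w\otimes v)=w\otimes(\id-\sigma(z)\rho(x_i))v$ and
\[
  (\ad W)(V)\cong\bigoplus_{i=1}^4 W_{x_i}\otimes\mathrm{Im}\bigl(\id-\sigma(z)\rho(x_i)\bigr).
\]

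Next I would read off a multiplicity condition. The support of $(\ad W)(V)$ is contained in the single conjugacy class $\{x_1z,\dots,x_4z\}$, and $G^{x_1z}=G^{x_1}=\langle x_1,x_2x_3,z\rangle$ is abelian by Lemma~\ref{lem:centralizers}. Being absolutely simple and nonzero, $(\ad W)(V)$ therefore equals $M(x_1z,\tau)$ with $\deg\tau=1$, so each isotypic component is one-dimensional. In particular $W_{x_1}\otimes\mathrm{Im}(\id-\sigma(z)\rho(x_1))$ is one-dimensional, i.e.\ $\mathrm{rank}(\id-\sigma(z)\rho(x_1))=1$; equivalently $E_i:=\ker(\rho(x_i)-\sigma(z)^{-1}\id)$ has dimension $d-1$, the same value for every $i$ because the $x_i$ are conjugate and hence the $\rho(x_i)$ share one spectrum.

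Finally I would conclude by an invariance argument. Since $\rho(x_1)$ and $\rho(x_2)$ act by the scalar $\sigma(z)^{-1}$ on $E_1$ and $E_2$ respectively, the subspace $E_1\cap E_2$ is stable under $\rho(x_1)$ and $\rho(x_2)$; as $G=\langle z,x_1,x_2\rangle$ (because $x_3=x_2x_1x_2^{-1}$ and $x_4=x_1x_2x_1^{-1}$) and $\rho(z)$ is scalar, $E_1\cap E_2$ is $\rho(G)$-stable. By irreducibility it equals $0$ or $V$; the case $V$ forces all $\rho(x_i)$ to be scalars and hence $d=1$. Otherwise $E_1\cap E_2=0$, whence $2(d-1)=\dim E_1+\dim E_2=\dim(E_1+E_2)\le d$, so $d\le2$.

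The main obstacle is the first step: performing the braiding computation for $\varphi_1$ correctly and justifying the clean decomposition of $(\ad W)(V)$ into its isotypic pieces, since everything downstream rests on identifying the $x_1z$-component with $\mathrm{Im}(\id-\sigma(z)\rho(x_1))$. Once that identification is secured, the multiplicity bound and the irreducibility/dimension-count argument are short and characteristic-free; the route above is driven by the simplicity of $(\ad W)(V)$, while the hypothesis on $(\ad W)^2(V)$ remains available should one wish to pin down $\sigma(z)$ or argue more computationally.
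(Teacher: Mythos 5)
Your argument is correct, and it takes a genuinely different---in fact formally stronger---route than the paper's. The paper's proof uses both hypotheses: from the absolute simplicity of $(\ad W)(V)$ it extracts only a single eigenvector relation $x_1v_0=\alpha_1v_0$, $x_2x_3v_0=\alpha_2v_0$ (its \eqref{eq:1}); it then computes the generator $y=\varphi_2(w_2\otimes w_1\otimes v_0)$ of $X_2^{W,V}$, invokes the absolute simplicity of $(\ad W)^2(V)$ together with the abelian centralizer $G^{x_2x_1z}$ to force $x_4y=\xi y$, reads off the relation $x_4v_0=\alpha_3(v_0-\sigma(z)x_2v_0)$ of \eqref{eq:2}, and concludes via the presentation \eqref{eq:presentation} that $\Kspan \{v_0,x_2v_0\}$ is $G$-stable, hence equals $V$. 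You instead exploit the simplicity of $(\ad W)(V)$ at all four points of its support simultaneously: your braiding computation agrees with the paper's formula $\varphi_1(w_i\otimes v)=w_i\otimes(v-\sigma(z)x_iv)$, so $(X_1^{W,V})_{x_iz}=w_i\otimes\mathrm{Im}\,(\id-\sigma(z)\rho(x_i))$, one-dimensionality of these components (the same deduction from absolute simplicity and the abelian centralizer that the paper itself makes) becomes four rank-one conditions, and the count on the eigenspaces $E_1,E_2$ finishes in two lines. The hypothesis on $(\ad W)^2(V)$ is never used, so you have in fact proved a stronger lemma; what the paper's longer route buys is the explicit spanning set $\{v_0,x_2v_0\}$ and the relations \eqref{eq:1}--\eqref{eq:2}, which match the normal form for degree-two representations developed in Lemma~\ref{lem:deg2}, while yours buys brevity, a weaker hypothesis, and no $\varphi_2$ computation. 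Two cosmetic remarks: ``isotypic component'' should read ``homogeneous component'' (the components of the $G$-degree grading), and your case $E_1\cap E_2=V$ is actually vacuous, since $E_1=V$ would force $\mathrm{rank}\,(\id-\sigma(z)\rho(x_1))=0$, contradicting the rank-one condition---though your fallback conclusion $\dim V=1$ handles it harmlessly.
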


\begin{proof}
	By \cite[Lemma 1.7]{MR2732989}, 
    \begin{equation*}
		X_1^{W,V}=\varphi_1(W\otimes V)=
        \K G\{\varphi_1(w_1\otimes v)\mid v\in V\}.
    \end{equation*}
	Moreover, a direct computation yields
	\begin{align}
		\varphi_1(w_i\otimes v)
    =w_i\otimes v-c_{V,W}c_{W,V}(w_i\otimes v)=w_i\otimes(v-\sigma(z)x_iv)
	\end{align}
    for $i\in\{1,2,3,4\}$ and $v\in V$.  Since $X_1^{W,V}\simeq (\ad
    W)(V)\ne0$, there exists $v\in V$ such that the tensor
    $w_1\otimes(v-\sigma(z)x_1v)\in(W\otimes V)_{x_1z}$ is non-zero.  Let
    $v_0\coloneqq v-\sigma(z)x_1v$. Since $X_1^{W,V}$ is absolutely simple and the
    centralizer of $x_1z$ is abelian, $(X_1^{W,V})_{x_1z}$ is one-dimensional.
    Therefore there exist $\alpha_1,\alpha_2\in\K^\times$ such that 
	\begin{align}
		\label{eq:1}
		&\alpha_1^4=\alpha_2^2,&& x_1v_0=\alpha_1 v_0,&& x_2x_3v_0=\alpha_2v_0.
	\end{align}
	By \cite[Lemma 1.7]{MR2732989},
    \[
		X_2^{W,V}=\varphi(W\otimes X_1^{W,V})=\K G\{\varphi_2(w_1\otimes
    w_1\otimes v_0),\varphi_2(w_2\otimes w_1\otimes v_0)\}.
    \]
	Let $y\coloneqq\varphi_2(w_2\otimes w_1\otimes v_0)\in X_2^{W,V}$.  A direct calculation
    yields
    \begin{multline*}
        y=\varphi_2(w_2\otimes w_1\otimes v_0) = w_2\otimes w_1\otimes v_0 - x_3zw_2\otimes x_2w_1\otimes x_2v_0\\
        +x_2w_1\otimes w_2\otimes (v_0-\sigma(z)x_2v_0),
    \end{multline*}
    and hence $y\in(W\otimes W\otimes V)_{x_2x_1z}$ is non-zero.  
    Since $(\ad W)^2(V)$ is
    absolutely simple and the centralizer of $x_2x_1z$ is the
    abelian group \[
			G^{x_2x_1z}=x_3G^{x_2x_3z}x_3^{-1}=\langle x_2x_1z, x_4,z\rangle,
		\]
		there exists $\xi\in\K$ such that $x_4y=\xi y$.  The second
    tensor factors $w_1$, $x_2w_1$, and $w_2$ in $y$ are linearly independent
    and $2\trid 1=3$.  Hence, by comparing the third tensor factors, we
		conclude that there exists $\alpha_3\in\K\setminus\{0\}$ such that 
    \begin{equation}
        \label{eq:2}
        \alpha_3(v_0-\sigma(z)x_2v_0)=x_4v_0.
    \end{equation}
    By the presentation for $T$ given in \eqref{eq:presentation}
    and by the irreducibility of $V$, it is enough
    to show that $S\coloneqq\Kspan\{v_0,x_2v_0\}$
    is stable under the action of $x_1$ and $x_2$.
    First, $x_1v_0\in S$ since $x_1v_0=\alpha_1v_0$. Equations $x_1x_2=x_4x_1$ and
    \eqref{eq:2} imply that $x_1x_2v_0=x_4x_1v_0=\alpha_1x_4v_0\in S$. Finally,
    applying $x_2$ to Equation \eqref{eq:2} and using $x_2x_4=x_4x_1$ we
    conclude that $x_2^2v_0\in S$. 
\end{proof}

\begin{lem}
	\label{lem:deg2}
    Assume that $\K $ is algebraically closed.
    Let $(\rho,U)$ be an irreducible representation of $\K G$ of degree $2$.
    Then $\mathrm{char}(\K)\ne2$, $\rho(z)\in\K^\times$
    and there exist $\alpha,\beta\in\K^\times$ with $\beta^2+\beta+1=0$
    and a basis of $U$ such that 
		\begin{equation}
		\label{eq:rho}
		\begin{aligned}
			&\rho(x_{1})=\left(
			\begin{array}{cc}
				\alpha & -\alpha^{2}\beta^{2}\\
				0 & \alpha\beta
			\end{array}\right),
			\quad
			&&\rho(x_{2})=\left(
			\begin{array}{cc}
				0 & -\alpha^{2}\beta\\
				1 & -\alpha\beta^{2}
			\end{array}\right),\\
			&\rho(x_{3})=\left(
			\begin{array}{cc}
				\alpha\beta & 0\\
				\beta^{2} & \alpha
			\end{array}\right),
			\quad
			&&\rho(x_{4})=\left(
			\begin{array}{cc}
				-\alpha\beta^{2} & -\alpha^{2}\\
				\beta & 0
			\end{array}\right),
		\end{aligned}
	\end{equation}
    with respect to this basis. Further, $\rho (x_1x_2x_3)=-\alpha^3\id _U$.
\end{lem}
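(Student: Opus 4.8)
The plan is to exploit that $G$ is generated by the conjugacy class $\{x_1,x_2,x_3,x_4\}$ together with the central element $z$, subject to the braid relation $x_1x_2x_1=x_2x_1x_2$, the relation $x_1^3=x_2^3$, and $x_3=x_2x_1x_2^{-1}$, $x_4=x_1x_2x_1^{-1}$, as in \eqref{eq:presentation}. Writing $A=\rho(x_1)$ and $B=\rho(x_2)$, we have $\rho(x_3)=BAB^{-1}$ and $\rho(x_4)=ABA^{-1}$, so the whole representation is governed by the pair $(A,B)$. Since $z$, $x_1^3$ and $x_1x_2x_3$ are central in $G$ (recall \eqref{eq:T_x13=x23=x33=x34} and the description of $Z(T)$, whose images remain central in the quotient $G$), Schur's lemma forces $\rho(z)$, $A^3=\rho(x_1^3)$ and $\rho(x_1x_2x_3)$ to be scalar operators; all three are invertible, so in particular $\rho(z)\in\K^\times$, which is the second assertion.

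Next I would pin down the eigenvalues of $A$. If $A$ were scalar then, as $x_1,\dots,x_4$ are conjugate and $z$ is central, the entire image $\rho(\K G)$ would consist of scalar matrices, contradicting the irreducibility of the degree-$2$ module $U$. Hence $A$ is non-scalar, and since $A^3=\mu\,\id_U$ for some $\mu\in\K^\times$, its minimal polynomial divides $t^3-\mu$. Writing $\alpha$ for an eigenvalue and $\beta$ for the ratio of the two eigenvalues, one gets $\beta^3=1$, and because $A$ is non-scalar a short case check (the only coincidence $\beta=1$ can occur is in characteristic $3$) shows $\beta^2+\beta+1=0$ in every characteristic. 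Thus the eigenvalues of $A$, and by conjugacy of $B$, are $\alpha$ and $\alpha\beta$, and $\mu=\alpha^3$.

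The heart of the argument is the rigidity of $2$-dimensional representations of the braid relation. Conjugating, I would put $A$ in upper-triangular form with diagonal $(\alpha,\alpha\beta)$ and a normalized off-diagonal entry, and write $B$ as a lower-triangular matrix with the same diagonal (a common eigenvector of $A$ and $B$ is excluded by irreducibility). Imposing $ABA=BAB$ then determines the off-diagonal entry of $B$ \emph{uniquely}, and a direct computation using $\beta^2+\beta+1=0$ shows it equals $2\alpha^2\beta$. \textbf{This is the crucial step:} irreducibility demands that this entry be nonzero, since otherwise $A$ and $B$ would be simultaneously triangular and $U$ reducible; as $\alpha,\beta\in\K^\times$ this forces $2\ne 0$, i.e.\ $\mathrm{char}\,\K\ne 2$. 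It follows that, with the prescribed eigenvalues, the representation exists and is unique up to isomorphism; choosing the basis appropriately yields exactly the matrices $\rho(x_1),\rho(x_2)$ in \eqref{eq:rho}, and then $\rho(x_3)=BAB^{-1}$, $\rho(x_4)=ABA^{-1}$ produce the remaining two.

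Finally, for the scalar $\rho(x_1x_2x_3)=\gamma\,\id_U$ I would use determinants: $\gamma^2=\det\rho(x_1x_2x_3)=(\det A)(\det B)(\det\rho(x_3))=(\alpha^2\beta)^3=\alpha^6$, so $\gamma=\pm\alpha^3$, and evaluating the trace of the explicit product (legitimate because $\mathrm{char}\,\K\ne 2$, so $2\gamma$ determines $\gamma$) fixes the sign as $\gamma=-\alpha^3$. The main obstacle is the rigidity computation of the preceding paragraph: both the exclusion of characteristic $2$ and the precise normal form are extracted from solving the braid relation in the $2$-dimensional irreducible case, and the factor $2$ in the off-diagonal entry is precisely what ties irreducibility to the characteristic.
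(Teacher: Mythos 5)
Your proof is correct, but it takes a genuinely different route from the paper's. The paper works concretely: since $x_1$ and $x_2x_3$ lie in the abelian centralizer $G^{x_1}$, they have a common eigenvector $v_0$, and all four matrices are computed directly in the basis $\{v_0,x_2v_0\}$, with $\beta^2+\beta+1=0$ and the normalizations extracted from $x_3x_2=x_1x_3$, $\det\rho(x_1)=\det\rho(x_3)$ and the centrality of $x_1^3=x_2^3$; characteristic $2$ is excluded only at the end, by exhibiting the invariant vector $\alpha v_0+x_2v_0$. You instead argue structurally: Schur's lemma makes $\rho(z)$, $\rho(x_1)^3$ and $\rho(x_1x_2x_3)$ scalar; the minimal-polynomial analysis of $A=\rho(x_1)$ yields $\beta^2+\beta+1=0$ uniformly, including the characteristic-$3$ Jordan case with $\beta=1$ (which the paper never needs to isolate); and the braid relation in your triangular normal form forces the rigidity that excludes characteristic $2$. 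I verified your key computation: with $A$ upper and $B$ lower triangular, both with diagonal $(\alpha,\alpha\beta)$, and off-diagonal entries $a,b\ne0$ (nonzero precisely because a common eigenvector of $A$ and $B$ would give a $G$-invariant line, as $G=\langle x_1,x_2,z\rangle$ and $\rho(z)$ is scalar), the relation $ABA=BAB$ is equivalent to $ab=2\alpha^2\beta$, so $\mathrm{char}\,\K\ne2$ follows as you say. One caveat worth making explicit: this output depends on your "same diagonal" pairing. If one instead takes $e_2$ to be the $\alpha$-eigenvector of $B$, so that $B$ has diagonal $(\alpha\beta,\alpha)$, the braid relation gives $ab=-\alpha^2\beta$, which does \emph{not} vanish in characteristic $2$; reducibility there manifests as coinciding eigendirections rather than a vanishing entry. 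Your pairing is legitimately available (as $B$ is conjugate to $A$ it has an $\alpha\beta$-eigenvector, and no eigenvector of $B$ is one of $A$), but the argument silently relies on this choice. Finally, two steps you leave as routine should actually be carried out: the verification that the matrices in \eqref{eq:rho} satisfy the braid relation together with $\rho(x_3)=BAB^{-1}$ and $\rho(x_4)=ABA^{-1}$, which is needed before uniqueness-up-to-isomorphism can deliver a basis realizing \eqref{eq:rho} exactly (the paper avoids this by deriving \eqref{eq:rho} directly in its chosen basis); and the trace evaluation fixing the sign, which does work --- the product $\rho(x_1)\rho(x_2)\rho(x_3)$ computes to $-\alpha^3\id_U$, trace $-2\alpha^3$ --- though the paper gets it more cheaply from $\rho(x_1x_2x_3)v_0=\alpha_1\alpha_2v_0=-\alpha^3v_0$ plus Schur. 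What your approach buys is conceptual clarity (a rigidity statement for two-dimensional braid pairs, characteristic handled uniformly); what the paper's buys is that the explicit matrices of \eqref{eq:rho} come out of the computation with no separate identification step.
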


\begin{proof}
	Let $v_0\in U\setminus\{0\}$ and let $\alpha_1,\alpha_2\in\K^\times$ such
	that $x_1v_0=\alpha_1v_0$ and $x_2x_3v_0=\alpha_2v_0$. Then
	$\alpha_1^4=\alpha_2^2$. Since $\deg\rho=2$ and $G$ is generated by $x_1$,
	$x_2$, and the central element $z$, $U=\Kspan\{v_0,x_2v_0\}$ and
	$x_3v_0=\beta_1v_0+\beta_2x_2v_0$ and $x_4v_0=\beta_3v_0+\beta_4x_2v_0$ for
	some $\beta_1,\beta _2,\beta _3,\beta_4\in\K$. Writing
	$x_2x_3v_0=\alpha_2v_0$ as $\alpha_2^{-1}x_3v_0=x_2^{-1}v_0$ and using
	\eqref{eq:T_x13=x23=x33=x34}
	we conclude that
	\[
		x_2(x_2v_0)=x_2^{-1}x_1^3v_0
		=\alpha_1^3\alpha_2^{-1}x_3v_0=\alpha_1^3\alpha_2^{-1}(\beta_1v_0+\beta_2x_2v_0).
	\]
	Therefore 
	\begin{equation*}
		\begin{aligned}
			&\rho(x_{1})=\left(
			\begin{array}{cc}
				\alpha_1 & \alpha_1\beta_3\\
				0 & \alpha_1\beta_4
			\end{array}\right),
			\quad
			&&\rho(x_{2})=\left(
			\begin{array}{cc}
				0 & \alpha_1^3\alpha_2^{-1}\beta_1\\
				1 & \alpha_1^3\alpha_2^{-1}\beta_2
			\end{array}\right),\\
			&\rho(x_{3})=\left(
			\begin{array}{cc}
				\beta_1 & 0\\
				\beta_2 & \alpha_1
			\end{array}\right),
			\quad
			&&\rho(x_{4})=\left(
			\begin{array}{cc}
				\beta_3 & \alpha_2\\
				\beta_4 & 0
			\end{array}\right).\\
		\end{aligned}
	\end{equation*}
	Since $\det\rho(x_1)=\det\rho(x_3)$ and $x_3x_2=x_1x_3$, we obtain that 
  $\alpha_1\beta_4=\beta _1$ and
	\begin{align*}
		&\beta_2\beta_4=1,&&\beta_2\beta_3+\beta_1=0,
    &&\alpha _1\alpha_2^{-1}\beta_1^2=\beta_3,
    &&\alpha_1\alpha_2^{-1}\beta_2(\alpha_1+\beta_1)=\beta_4.
	\end{align*}
	Let $\alpha\coloneqq\alpha_1$ and $\beta\coloneqq\beta_4$. Then the above
  equations are equivalent to
	\begin{align*}
		&\alpha_2=-\alpha^2,&&\beta_1=\alpha\beta,&&\beta_2=\beta^2,
    &&\beta_3=-\alpha\beta^2,&&\beta^2+\beta+1=0.
  \end{align*}
  Hence we conclude \eqref{eq:rho}. Since $\rho
  (x_1x_2x_3)v_0=\alpha_1\alpha _2v_0=-\alpha ^3v_0$, we obtain that
  $\rho (x_1x_2x_3)=-\alpha ^3\id _U$ from $x_1x_2x_3\in Z(G)$,
  the absolute irreducibility of $\rho $, and Schur's Lemma.

  Assume that $\mathrm{char}\,\K=2$. Then $v=\alpha v_0+x_2v_0\in U$ is a
  $\rho$-invariant vector. This is a contradiction to the irreducibility of
  $(\rho ,U)$.
\end{proof}
 
\subsection{Main results}

Let $G$, $z,x_1,\dots,x_4$, $V$ and $W$ as in Subsection
\ref{subsection:T:preliminaries}.  Our aim is to prove Proposition
\ref{pro:deg=1} and Theorem \ref{thm:T} below.  

\begin{pro}
	\label{pro:deg=1}
    Let $V=M(z,\rho)$ and $W=M(x_1,\sigma)$ be absolutely simple
    Yetter-Drinfeld modules over $G$.  Assume that $(V,W)$ admits all
    reflections, the Weyl groupoid $\mathcal{W}(V,W)$ is finite, and the Cartan
    matrix of $(V,W)$ is non-diagonal and of finite type.  Then $\deg\rho=1$. 
\end{pro}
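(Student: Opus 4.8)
The plan is to argue by contradiction: assume $\deg\rho=2$ and derive an impossible configuration. Note first that under the hypotheses the modules $(\ad W)(V)$ and $(\ad W)^2(V)$ are nonzero (since the Cartan matrix is non-diagonal) and absolutely simple (since $(V,W)$ admits all reflections), so Lemma~\ref{lem:degrees} gives $\dim V\le 2$; as $\supp V=\{z\}$ we have $\dim V=\deg\rho$, so it suffices to rule out $\deg\rho=2$. Because absolute simplicity, the Cartan matrix, the property of admitting all reflections, and the Weyl groupoid are all preserved under extension of scalars, I may assume $\K=\bar\K$. Then Lemma~\ref{lem:deg2} applies: it shows $\mathrm{char}\,\K\neq2$ (so the characteristic-two case needs no further argument) and furnishes the explicit matrices \eqref{eq:rho} for $\rho(x_i)$, together with $\beta^2+\beta+1=0$ and $\rho(x_1x_2x_3)=-\alpha^3\id_U$.

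Next I would pin down the Cartan type. Non-diagonality means $a^M_{12}\neq0\neq a^M_{21}$; in particular $(\ad V)(W)\neq0$, so Lemma~\ref{lem:sigma(x1)=-1} yields $(\ad W)^3(V)\neq0$ and hence $a^M_{21}\le-3$. Finite type forces $|a^M_{12}a^M_{21}|\le3$, so $a^M_{12}=-1$ and $a^M_{21}=-3$ (type $G_2$) and $(\ad W)^4(V)=0$. Since $(V,W)$ admits all reflections, the top module $(\ad W)^3(V)$ is absolutely simple, and the second assertion of Lemma~\ref{lem:sigma(x1)=-1} gives $\dim W=4$ and $\sigma(x_1)=-1$; thus the action of $G$ on $W$ is the one tabulated in Remark~\ref{rem:G_on_W}, with $\epsilon^2=1$.

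At this stage both modules are completely explicit. Since $\supp V=\{z\}$ with $zv=\rho(z)v$, and since the diagonal entries of the table in Remark~\ref{rem:G_on_W} give $x_iw=-w$ for every $w\in W_{x_i}$, Lemma~\ref{lem:c^2} applies with $q_V=\rho(z)$ and $q_W=-1$ and yields
\[
  c_{W,V}c_{V,W}(v\otimes w)=\rho(x_i)v\otimes\sigma(z)w,\qquad v\in V,\ w\in W_{x_i}.
\]
I would then exploit the easy direction $a^M_{12}=-1$. Let $v_0,f$ be the eigenbasis of $\rho(x_1)$ with eigenvalues $\alpha,\alpha\beta$. Because $v_0,f$ are $\rho(x_1)$-eigenvectors, the formula above shows that braiding $V$ past $w_1$ is diagonal, so by Lemma~\ref{lem:X_n} the iterated adjoint $(\ad V)^m(w_1)$ stays inside $V^{\otimes m}\otimes\K w_1$ and the whole $V$-string on $w_1$ is governed by the diagonal braided subspace on $v_0,f,w_1$ with self-braidings $\rho(z),\rho(z),-1$ and double braidings $\tilde q_{v_0w_1}=\sigma(z)\alpha$, $\tilde q_{fw_1}=\sigma(z)\alpha\beta$, $\tilde q_{v_0f}=\rho(z)^2$. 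Imposing $(\ad V)(W)\neq0=(\ad V)^2(W)$ through the standard rank-two relations for diagonal braidings forces $\rho(z)=-1$, while the absolute simplicity of $(\ad W)^2(V)$ forces $\sigma(z)=\alpha^{-1}\beta^2$ via relation~\eqref{eq:2} (insert the explicit $\rho(x_2),\rho(x_4)$ from \eqref{eq:rho}). Consistently, these values give $\tilde q_{fw_1}=1$ and $\tilde q_{v_0w_1}=\beta^2$.

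It remains to extract a contradiction from the other direction $a^M_{21}=-3$, and this is where the genuine work lies. With $\rho(z)=-1$, $\sigma(z)=\alpha^{-1}\beta^2$, $\epsilon^2=1$ and $\beta^2+\beta+1=0$ all fixed, no free parameters remain, and I would compute $(\ad W)^2(V)$ and $(\ad W)^3(V)$ explicitly as the Yetter-Drinfeld modules $X_2^{W,V}$ and $X_3^{W,V}$ of Lemma~\ref{lem:X_n}, using the table of Remark~\ref{rem:G_on_W} and the matrices \eqref{eq:rho}. Unlike the $V$-string on $w_1$, here the elements $x_i$ with $i\neq1$ do not preserve the eigenbasis $v_0,f$, so the computation genuinely mixes $v_0$ with $x_2v_0$ and does \emph{not} reduce to a diagonal braiding. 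The contradiction should then surface as a clash between two requirements that must hold simultaneously: the absolute simplicity of $(\ad W)^3(V)$ demanded by the existence of reflections (equivalently, one-dimensionality of each of its degree components over the relevant abelian centralizer), and the vanishing $(\ad W)^4(V)=0$ demanded by $a^M_{21}=-3$. Concretely, I expect that tracking the distinguished degree component one step beyond relation~\eqref{eq:2} either produces a nonzero element of $(\ad W)^4(V)$ or breaks the one-dimensionality of a degree component of $(\ad W)^3(V)$. The main obstacle is precisely this explicit bookkeeping in $V^{\otimes m}\otimes W$; the rigidity coming from $\beta^2+\beta+1=0$, $\epsilon^2=1$ and $\mathrm{char}\,\K\neq2$ is what makes the over-determined system collapse once $\rho(z)$ and $\sigma(z)$ are determined.
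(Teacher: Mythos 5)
Your reduction steps are sound and essentially retrace the paper's own preparation: Lemma~\ref{lem:degrees} gives $\deg\rho\le2$; Lemma~\ref{lem:deg2} disposes of characteristic $2$ and supplies \eqref{eq:rho}; Lemma~\ref{lem:sigma(x1)=-1} together with finite type pins $a^M_{12}=-1$, $a^M_{21}=-3$ and $\sigma(x_1)=-1$; and your diagonal reduction of the $V$-string on $w_1$ correctly recovers the constraints of Lemmas \ref{lem:deg2:X1} and \ref{lem:deg2:X2}, namely $(1-\sigma(z)\alpha)(1-\sigma(z)\alpha\beta)=0$ and $\rho(z)=-1$. (Your extra observation that \eqref{eq:2} combined with \eqref{eq:rho} even forces $\sigma(z)\alpha\beta=1$ is correct, but not needed: the two-case constraint suffices for the paper's endgame.)

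The genuine gap is the final step: you never produce the contradiction. You only state that you \emph{expect} one to emerge from an explicit computation of $X_3^{W,V}$ and $X_4^{W,V}$ with $\deg\rho=2$, a computation you do not carry out — and everything you have actually established up to that point is compatible with $\deg\rho=2$. Moreover, your guess about \emph{where} the obstruction lies is questionable: the paper does not find it in the simplicity/vanishing pattern of $(\ad W)^m(V)$ at the original pair at all, but at the reflected object. Since $a^{(V,W)}_{1,2}=-1$ gives $R_1(V,W)=\left(V^*,X_1^{V,W}\right)$, and since $\supp X_1^{V,W}\simeq\chi_1^T$ and $\supp V^*\simeq\supp V$ as quandles, Lemma~\ref{lem:sigma(x1)=-1} applies a \emph{second} time, to the reflected pair, and together with \cite[Thm.\,7.2(3)]{MR2734956} forces $\sigma_1(x_1z)=-1$. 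By Lemma~\ref{lem:deg2:X1} and $\rho(z)=-1$ this means $\alpha^2\beta\sigma(z)^2=-1$, whereas \eqref{eq:deg2} forces $\alpha^2\beta\sigma(z)^2\in\{\beta,\beta^2\}$, which is incompatible with $\beta^2+\beta+1=0$. So the missing idea is to exploit the hypothesis that $(V,W)$ admits all reflections once more, at $R_1(V,W)$, rather than to push the tensor bookkeeping at the original object — where it is not even known that a contradiction exists, so your plan might not terminate even if the computation were done.
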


The proof of Proposition \ref{pro:deg=1} will be given in Section
\ref{section:deg=1}.  

Recall that $(k)_t=1+t+\cdots+t^{k-1}$ for all $k\in \N$.

\begin{thm}
	\label{thm:T}
	Let $V=M(z,\rho)$ and $W=M(x_1,\sigma)$ be absolutely simple
	Yetter-Drinfeld modules over $G$. 
  Assume that $c_{W,V}c_{V,W}\not=\id_{V\otimes W}$. The following
	are equivalent:
	\begin{enumerate}
		\item The Nichols algebra $\NA (V\oplus W)$ is finite-dimensional.
		\item The pair $(V,W)$ admits all reflections and $\mathcal{W}(V,W)$ is
			finite.
		\item $\deg\rho=1$, and
      $(\rho (x_1)\sigma (z))^2-\rho (x_1)\sigma (z)+1=0$,
      $\sigma (x_1)=-1$, $\sigma (x_2x_3)=1$, $\rho (x_1z)\sigma (z)=1$.
	\end{enumerate}
	In this case, $\mathcal{W}(V,W)$ is standard with Cartan
	matrix of type $G_2$.  If $\mathrm{char}\,\K\ne 2$ then
    \begin{multline*}
        \mathcal{H}_{\NA (V\oplus W)}(t_1,t_2)\\=(6)_{t_1}(6)_{t_1t_2^3}(6)_{t_1^2t_2^3}(2)_{t_2}^2(3)_{t_2}(6)_{t_2}
        (2)_{t_1t_2}^2 
        (3)_{t_1t_2}(6)_{t_1t_2}(2)_{t_1t_2^2}^2(3)_{t_1t_2^2}(6)_{t_1t_2^2}, 
    \end{multline*}
	and $\dim \NA (V\oplus W)=6^3\,72^3=80621568$.
	If $\mathrm{char}\,\K=2$ then
	\begin{align*}
        \mathcal{H}_{\NA (V\oplus W)}(t_1,t_2)=(3)_{t_1}(3)_{t_1t_2^3}(3)_{t_1^2t_2^3}(2)_{t_2}^2(3)_{t_2}^2
        (2)_{t_1t_2}^2(3)_{t_1t_2}^2 (2)_{t_1t_2^2}^2(3)_{t_1t_2^2}^2, 
	\end{align*}
	and $\dim \NA (V\oplus W)=3^3\,36^3=1259712$.
\end{thm}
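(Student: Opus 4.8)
The plan is to establish the cycle of implications (1)$\Rightarrow$(2)$\Rightarrow$(3)$\Rightarrow$(1), and to obtain the two Hilbert series as part of the last implication. For (1)$\Rightarrow$(2) I would simply invoke the general theory: a finite-dimensional Nichols algebra over a group has a finite Weyl groupoid and its defining pair of Yetter-Drinfeld modules admits all reflections \cite{MR2766176,MR2734956}.

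For (2)$\Rightarrow$(3), I would first record that the hypothesis $c_{W,V}c_{V,W}\neq\id$ makes the Cartan matrix non-diagonal: by Lemma~\ref{lem:X_n} with $m=1$ one has $(\ad V)(W)=\varphi_1(V\otimes W)=(\id-c_{W,V}c_{V,W})(V\otimes W)\neq0$. Combined with the finiteness of $\mathcal{W}(V,W)$ and the admissibility of all reflections, the Cartan matrix is non-diagonal and of finite type, so Proposition~\ref{pro:deg=1} gives $\deg\rho=1$; hence $\dim V=1$ and $V=V_z$. Since $(\ad V)(W)\neq0$ and, by the admits-all-reflections hypothesis, $(\ad W)^3(V)$ is absolutely simple, Lemma~\ref{lem:sigma(x1)=-1} yields $\dim W=4$ and $\sigma(x_1)=-1$. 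The action of $G$ on $W$ is then the one tabulated in Remark~\ref{rem:G_on_W}, and $\epsilon^2=1$ for $\epsilon=\sigma(x_2x_3)$.

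It remains to pin down the scalars. With $q_V=\rho(z)$ and $q_W=\sigma(x_1)=-1$, Lemma~\ref{lem:c^2} gives $c_{W,V}c_{V,W}(v\otimes w_1)=\lambda\,v\otimes w_1$ with $\lambda:=\rho(x_1)\sigma(z)$. The crux of this step is to compute the modules $(\ad V)^m(W)$ and $(\ad W)^m(V)$ for all $m$ by means of Lemma~\ref{lem:X_n}; these (technical) calculations yield the Cartan entries $a_{12}=-1$, $a_{21}=-3$ and the self-braidings of the root modules. Imposing that repeated application of the two reflections never leaves the finite-type regime then forces $\lambda$ to be a primitive sixth root of unity, that is $\lambda^2-\lambda+1=0$, together with $\sigma(x_2x_3)=1$ and the normalization $\rho(x_1z)\sigma(z)=1$; the latter is equivalent to $\rho(z)=\lambda^{-1}$, so the self-braiding of $V$ is again a primitive sixth root of unity. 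This is precisely~(3), and the same reflection computation shows $\mathcal{W}(V,W)$ is standard with Cartan matrix of type $G_2$.

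Finally, for (3)$\Rightarrow$(1) together with the Hilbert series, I would run the reflections explicitly to exhibit the six positive roots $\alpha_1$, $\alpha_2$, $\alpha_1+\alpha_2$, $\alpha_1+2\alpha_2$, $\alpha_1+3\alpha_2$, $2\alpha_1+3\alpha_2$ of $G_2$ and identify their root modules: the three long roots $\alpha_1$, $\alpha_1+3\alpha_2$, $2\alpha_1+3\alpha_2$ carry one-dimensional modules with self-braiding a primitive sixth root of unity, whose Nichols algebra has dimension $6$ and Hilbert series $(6)_t$, while the three short roots $\alpha_2$, $\alpha_1+\alpha_2$, $\alpha_1+2\alpha_2$ carry four-dimensional modules of tetrahedral type, whose Nichols algebra has dimension $72$ and Hilbert series $(2)_t^2(3)_t(6)_t$. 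Since $\NA(V\oplus W)$ decomposes, as a graded vector space, into the tensor product of the Nichols algebras of the root modules, substituting the degree of each root for $t$ and multiplying the factors produces the stated $\mathcal{H}_{\NA(V\oplus W)}(t_1,t_2)$, and $t_1=t_2=1$ gives $\dim\NA(V\oplus W)=6^3\,72^3$. In characteristic $2$ the equation $\lambda^2-\lambda+1=0$ makes $\lambda$ a primitive cube root of unity, so each one-dimensional factor collapses to $(3)_t$ and each four-dimensional factor degenerates to dimension $36$ with Hilbert series $(2)_t^2(3)_t^2$, yielding the second series and $\dim\NA(V\oplus W)=3^3\,36^3$. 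The \emph{main obstacle} is the identification and dimension count of the four-dimensional root modules together with the all-$m$ computation of $(\ad V)^m(W)$ and $(\ad W)^m(V)$ underlying the Cartan data; for the four-dimensional Nichols algebras I would rely on the known finite-dimensional Nichols algebras over the tetrahedral rack, and I would carry the characteristic-$2$ case in parallel, tracking the characteristic dependence of each root-module factor.
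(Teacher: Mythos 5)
Your overall architecture is the same as the paper's: (1)$\Rightarrow$(2) via \cite{MR2766176}, (2)$\Rightarrow$(3) via the technical computations of $(\ad V)^m(W)$ and $(\ad W)^m(V)$, and (3)$\Rightarrow$(1) via the $G_2$ root decomposition of \cite[Thm.\,2.6]{MR2732989} with exactly the paper's identification of root modules (three one-dimensional long-root factors with Hilbert series $(6)_t$, resp.\ $(3)_t$ in characteristic $2$, and three tetrahedral short-root factors with $(2)_t^2(3)_t(6)_t$, resp.\ $(2)_t^2(3)_t^2$). But there is one genuine gap in your (2)$\Rightarrow$(3): you infer that ``the Cartan matrix is non-diagonal \emph{and of finite type}'' from the finiteness of $\mathcal{W}(V,W)$ together with the admissibility of all reflections, and then feed this into Proposition~\ref{pro:deg=1}. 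That inference fails: finiteness of a rank-two Weyl groupoid does \emph{not} imply that the Cartan matrix at the given object is of finite type --- the classification of finite rank-two Weyl groupoids contains many objects with Cartan entries $\le -4$, which is precisely why Proposition~\ref{pro:deg=1} carries ``of finite type'' as a hypothesis rather than a conclusion. The paper closes this hole with \cite[Prop.\,4.3]{partII}: after changing the object of $\mathcal{W}(V,W)$ and possibly interchanging $V$ and $W$, one may assume $(\ad V)(W)\ne0$, $(\ad V)^2(W)=0$ and $(\ad W)^4(V)=0$; combined with Lemma~\ref{lem:sigma(x1)=-1}, which gives $(\ad W)^3(V)\ne0$, this pins down $a_{12}=-1$, $a_{21}=-3$ at that object, and only then is Proposition~\ref{pro:deg=1} applicable. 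Working at the original object, as you do, you have no a priori bound on $a_{12}$ and $a_{21}$; and if one adopts the change-of-object normalization, one must additionally transport condition (3) back to the original pair, which the paper does implicitly through the reflection-invariance of the conditions established in Lemmas~\ref{lem:T:R1} and \ref{lem:T:R2}.

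A smaller inaccuracy in the same step: the constraints are not obtained by ``imposing that repeated application of the two reflections never leaves the finite-type regime.'' In the paper, $\sigma(x_2x_3)=1$ and $(\rho(x_1)\sigma(z))^2-\rho(x_1)\sigma(z)+1=0$ are single-object conditions coming from the absolute simplicity of $(\ad W)^2(V)$ (Lemma~\ref{lem:Y2}), while $\rho(x_1z)\sigma(z)=1$ comes from \emph{one} application of $R_1$: since $(\ad V)^2(W)=0$ forces $R_1(V,W)=(V^*,X_1^{V,W})$ and $\supp X_1^{V,W}\simeq \chi_1^T$, Lemma~\ref{lem:sigma(x1)=-1} applied to the reflected pair forces $\sigma_1(x_1z)=-1$, which by Lemma~\ref{lem:X1} is exactly $\rho(x_1z)\sigma(z)=1$. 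Since you defer to ``the technical calculations,'' this part is repairable, and your (3)$\Rightarrow$(1) matches the paper: Proposition~\ref{pro:cartan_matrix} and Lemmas~\ref{lem:T:R1}, \ref{lem:T:R2} give standardness of type $G_2$, Lemma~\ref{lem:T:braided_isomorphisms} identifies each root module with $V$ or $W$ as a braided vector space, and the factor dimensions $6$ (resp.\ $3$) and $72$ (resp.\ $36$) over the tetrahedral quandle are taken from \cite{MR2207786}, \cite{MR1994219} and \cite{MR2803792}, yielding the stated Hilbert series and dimensions.
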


We will prove Theorem \ref{thm:T} in Section \ref{section:T}. 

\section{Proof of Proposition \ref{pro:deg=1}}
\label{section:deg=1}

Let $V=M(z,\rho)$ and $W=M(x_1,\sigma)$
as in Subsection \ref{subsection:T:preliminaries}.
We write $X_n=X_n^{V,W}$ and $\varphi_n=\varphi_n^{V,W}$
for all $n\in\N_0$ if no confusion can arise.
We now prepare the proof of Proposition~\ref{pro:deg=1}.
Assume that $\deg\rho=2$, $\rho$ is given by
\eqref{eq:rho} of Lemma~\ref{lem:deg2} with respect to
a basis $\{v_0,x_2v_0\}$ of $V$, and that
the characteristic of $\K$ is not $2$.

Assume that $\sigma$ is an absolutely
irreducible representation of $G^{x_1}$ with
$\sigma(x_1)=-1$. Then $\sigma(x_2x_3)^2=1$. The action of $G$ on $W$ is
described in Remark~\ref{rem:G_on_W}.
We first compute $(\ad V)(W)\simeq X_1^{V,W}$. By
\cite[Lemma\,1.7]{MR2732989}, 
\[
    X_1^{V,W}=\varphi_1(V\otimes W)
	=\K G\{\varphi_1(v_0\otimes w_1),\varphi_1(x_2v_0\otimes w_1)\}.
\]
We record explicit formulas for later use in the following lemma.

\begin{lem}
	\label{lem:deg2:X1_auxiliar}
	Assume that $\sigma(x_1)=-1$. Then the following hold:
	\begin{align}
		\label{eq:deg2:phi1(v0,w1)} &\varphi_1(v_0\otimes w_1) = 
			(1-\sigma(z)\alpha) v_0\otimes w_1,\\
		\label{eq:deg2:phi1(x2v0,w1)} &\varphi_1(x_2v_0\otimes w_1)
			=(1-\sigma(z)\alpha\beta)x_2v_0\otimes
      w_1+\sigma(z)\alpha^2\beta^2v_0\otimes w_1.
	\end{align}
	Further $w_1'\coloneqq\varphi_1(x_2v_0\otimes w_1)\in(V\otimes W)_{x_1z}$ is
  non-zero and hence $X_1^{V,W}\ne0$.
\end{lem}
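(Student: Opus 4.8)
The plan is to evaluate both tensors directly from the definition of $\varphi_1$. Specializing Lemma~\ref{lem:X_n} to $m=1$ and using $\varphi_0=0$ yields $\varphi_1=\id-c_{W,V}c_{V,W}$ as an endomorphism of $V\otimes W$, so the task reduces to applying the Yetter--Drinfeld braiding twice and reading off the action of $G$ on $V$ and $W$.

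First I would record the relevant degrees. Since $z$ is central, $\supp V=z^G=\{z\}$, so every vector of $V$---in particular $v_0$ and $x_2v_0$---has degree $z$, whereas $w_1\in W_{x_1}$. For an arbitrary $v\in V$ the braiding gives $c_{V,W}(v\otimes w_1)=(zw_1)\otimes v=\sigma(z)\,w_1\otimes v$, using $zw_1=\sigma(z)w_1$ from Remark~\ref{rem:G_on_W}. The vector $\sigma(z)w_1$ again has degree $x_1$ (centrality of $z$ leaves degrees fixed under conjugation), so applying $c_{W,V}$ produces $\sigma(z)(x_1v)\otimes w_1$. Hence
\[
  c_{W,V}c_{V,W}(v\otimes w_1)=\sigma(z)\,(x_1v)\otimes w_1
  \qquad\text{for all }v\in V,
\]
and therefore $\varphi_1(v\otimes w_1)=v\otimes w_1-\sigma(z)(x_1v)\otimes w_1$.

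It then remains to substitute $v=v_0$ and $v=x_2v_0$ and to read the action of $x_1$ off the matrix $\rho(x_1)$ in \eqref{eq:rho}. For $v=v_0$ the first column gives $x_1v_0=\alpha v_0$, which yields \eqref{eq:deg2:phi1(v0,w1)}. For $v=x_2v_0$ the second column gives $x_1(x_2v_0)=-\alpha^2\beta^2v_0+\alpha\beta\,x_2v_0$, and subtracting $\sigma(z)$ times this from $x_2v_0\otimes w_1$ gives \eqref{eq:deg2:phi1(x2v0,w1)}. For the last assertion, $\varphi_1$ preserves the total degree and $x_2v_0\otimes w_1\in(V\otimes W)_{zx_1}=(V\otimes W)_{x_1z}$, so $w_1'\in(V\otimes W)_{x_1z}$; its coefficient of $v_0\otimes w_1$ equals $\sigma(z)\alpha^2\beta^2\ne0$, since $\sigma(z),\alpha,\beta\in\K^\times$ by Lemma~\ref{lem:deg2}. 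As $\{v_0\otimes w_1,\,x_2v_0\otimes w_1\}$ is linearly independent, $w_1'\ne0$, whence $X_1^{V,W}\supseteq\K G\,w_1'\ne0$.

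The statement carries no genuine obstacle: it is a direct computation. The only points needing care are the bookkeeping of degrees under the braiding---in particular that conjugation by the central element $z$ is trivial, so only the degrees $z$, $x_1$, and $x_1z$ arise---and the correct reading of the columns of $\rho(x_1)$. I expect the verification to be entirely routine once the uniform formula for $c_{W,V}c_{V,W}$ on $V\otimes W$ is established.
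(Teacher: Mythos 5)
Your proof is correct and follows essentially the same route as the paper: both compute $\varphi_1=\id-c_{W,V}c_{V,W}$ directly, using that every vector of $V$ has degree $z$ (so $c_{W,V}c_{V,W}(v\otimes w_1)=\sigma(z)\,x_1v\otimes w_1$) and reading $x_1v_0=\alpha v_0$ and $x_1x_2v_0=-\alpha^2\beta^2v_0+\alpha\beta\,x_2v_0$ off the matrix $\rho(x_1)$ in \eqref{eq:rho}. Your explicit justification of the non-vanishing of $w_1'$ via the coefficient $\sigma(z)\alpha^2\beta^2\ne0$ (noting $\sigma(z)\in\K^\times$ because $z$ acts invertibly, while $\alpha,\beta\in\K^\times$ come from Lemma~\ref{lem:deg2}) is a harmless elaboration of what the paper leaves implicit.
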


\begin{proof}
	Equation \eqref{eq:deg2:phi1(v0,w1)} follows by a direct computation using
  Remark~\ref{rem:G_on_W} and \eqref{eq:rho}. Let us prove Equation
	\eqref{eq:deg2:phi1(x2v0,w1)}. Using Remark~\ref{rem:G_on_W} and
	\eqref{eq:rho} we obtain
	\begin{align*}
          c_{W,V}c_{V,W}(x_2v_0\otimes w_1)=&\;c_{W,V}(zw_1\otimes x_2v_0)\\
          =&\;\sigma(z)x_1x_2v_0\otimes w_1\\
          =&\;\sigma(z)(-\alpha^2\beta^2v_0+\alpha\beta x_2v_0)\otimes w_1.
	\end{align*}
	Since $\varphi _1=\id -c_{W,V}c_{V,W}$, this implies Equation~\eqref{eq:deg2:phi1(x2v0,w1)}.
\end{proof}

\begin{lem}
	\label{lem:deg2:X1}
	Assume that $\sigma(x_1)=-1$. Then 
    $X_1^{V,W}$ is absolutely simple if and only if $(1-\sigma (z)\alpha
    )(1-\sigma (z)\alpha \beta )=0$. In this case,
    $X_1^{V,W}\simeq M(x_1z,\sigma_1)$, where $\sigma_1$ is an
	absolutely irreducible representation of $G^{x_1z}=G^{x_1}$ with
	\begin{align*}
		\sigma_1(x_1)&=-\alpha^2\beta\sigma(z), 
		&\sigma_1(x_1x_2x_3)&=\epsilon\alpha^3, 
		&\sigma_1(z)=\sigma(z)\rho(z).
	\end{align*}
\end{lem}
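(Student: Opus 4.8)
The plan is to reduce the statement to understanding the single homogeneous component $(X_1)_{x_1z}$ of $X_1:=X_1^{V,W}$ and then to recognise $X_1$ as an induced (hence absolutely simple) module. Since $z$ is central, $\supp V=\{z\}$ and $\supp(V\otimes W)=(x_1z)^G=\{x_1z,x_2z,x_3z,x_4z\}$, with each homogeneous component $(V\otimes W)_{x_iz}=V\otimes W_{x_i}$ two-dimensional. By Lemma~\ref{lem:deg2:X1_auxiliar} we have $X_1=\K G\{\varphi_1(v_0\otimes w_1),\varphi_1(x_2v_0\otimes w_1)\}$, generated by two elements of degree $x_1z$; writing $S:=\Kspan\{\varphi_1(v_0\otimes w_1),\varphi_1(x_2v_0\otimes w_1)\}\subseteq(V\otimes W)_{x_1z}$, this gives $X_1=\K G\,S$, so the whole question is governed by $\dim S$ together with the $G^{x_1z}$-module structure of $S$.

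The first computation is $\dim S$. Writing the two generators of \eqref{eq:deg2:phi1(v0,w1)} and \eqref{eq:deg2:phi1(x2v0,w1)} in the basis $\{v_0\otimes w_1,\,x_2v_0\otimes w_1\}$ of $(V\otimes W)_{x_1z}$, their coefficient matrix is triangular with determinant $(1-\sigma(z)\alpha)(1-\sigma(z)\alpha\beta)$. Hence $S=(V\otimes W)_{x_1z}$ is two-dimensional exactly when this product is nonzero, and otherwise $S=\K w_1'$ is one-dimensional, where $w_1'=\varphi_1(x_2v_0\otimes w_1)\neq0$ again by Lemma~\ref{lem:deg2:X1_auxiliar}.

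For the forward implication I would show that $(1-\sigma(z)\alpha)(1-\sigma(z)\alpha\beta)\neq0$ prevents absolute simplicity. In that case $X_1\supseteq\K G\,(V\otimes W)_{x_1z}$; since conjugation carries $x_1z$ to every $x_iz$ and sends $V\otimes(\K w_1)$ onto the entire component $V\otimes W_{x_i}$, this forces $X_1=V\otimes W$, which is eight-dimensional. On the other hand an absolutely simple Yetter-Drinfeld module supported on $(x_1z)^G$ has each homogeneous component of dimension $\deg\sigma_1=1$, because the centralizer $G^{x_1z}=G^{x_1}=\langle x_1,x_2x_3,z\rangle$ is abelian (Lemma~\ref{lem:centralizers}); such a module therefore has dimension $|(x_1z)^G|=4\neq8$. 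Thus $X_1$ absolutely simple forces the product to vanish.

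For the converse, assume $(1-\sigma(z)\alpha)(1-\sigma(z)\alpha\beta)=0$, so that $X_1=\K G\,w_1'$ with $w_1'\neq0$. I would compute the action of the generators $x_1,x_1x_2x_3,z$ of $G^{x_1z}$ on $w_1'$ via Remark~\ref{rem:G_on_W} and \eqref{eq:rho}. The central generators are immediate: $z$ acts by $\rho(z)\sigma(z)$, and $x_1x_2x_3$ acts on $V$ by $\rho(x_1x_2x_3)=-\alpha^3\id$ (Lemma~\ref{lem:deg2}) and on $W$ by $-\epsilon$, hence by $\epsilon\alpha^3$ on $V\otimes W$. The value $\sigma_1(x_1)=-\alpha^2\beta\sigma(z)$ is the only non-immediate point, and it is here that the two vanishing cases must be treated separately: when $\sigma(z)\alpha=1$ one has $w_1'=(1-\beta)x_2v_0\otimes w_1+\alpha\beta^2v_0\otimes w_1$ and a short computation gives $x_1w_1'=-\alpha\beta\,w_1'$, while when $\sigma(z)\alpha\beta=1$ the vector $w_1'$ is proportional to $v_0\otimes w_1$ and $x_1w_1'=-\alpha\,w_1'$; in both cases the eigenvalue equals $-\alpha^2\beta\sigma(z)$. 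This shows that $\K w_1'$ is a one-dimensional $G^{x_1z}$-submodule affording the character $\sigma_1$, so $(X_1)_{x_1z}=\K w_1'$ and the canonical map $M(x_1z,\sigma_1)\to X_1$, $g\otimes1\mapsto g\,w_1'$, is a surjective morphism of Yetter-Drinfeld modules. Since $G^{x_1z}$ is abelian, $\sigma_1$ is absolutely irreducible, so $M(x_1z,\sigma_1)$ is absolutely simple of dimension $4$; the nonzero surjection is therefore an isomorphism, giving $X_1\simeq M(x_1z,\sigma_1)$ absolutely simple. I expect the genuine obstacle to be exactly this case distinction for $\sigma_1(x_1)$, namely reconciling the two normal forms of $w_1'$ into one eigenvalue; everything else is routine once the explicit action tables are in hand.
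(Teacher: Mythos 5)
Your proposal is correct and follows essentially the same route as the paper: the same two generators $\varphi_1(v_0\otimes w_1)$, $\varphi_1(x_2v_0\otimes w_1)$ of $(X_1^{V,W})_{x_1z}$ yield the vanishing condition $(1-\sigma(z)\alpha)(1-\sigma(z)\alpha\beta)=0$, and the same two-case computation of $x_1w_1'$ (eigenvalue $-\alpha\beta$ when $\sigma(z)\alpha=1$, $-\alpha$ when $\sigma(z)\alpha\beta=1$, both equal to $-\alpha^2\beta\sigma(z)$) together with the central elements $z$ and $x_1x_2x_3$ determines $\sigma_1$. The only difference is presentational: where the paper directly invokes the criterion that absolute simplicity is equivalent to $\dim(X_1^{V,W})_{x_1z}=1$ (the centralizer $G^{x_1z}=G^{x_1}$ being abelian), you re-derive it via the dimension count $\dim(V\otimes W)=8\neq 4$ and an explicit surjection $M(x_1z,\sigma_1)\to X_1^{V,W}$ — the same underlying facts, slightly more spelled out.
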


\begin{proof}
  Since $\supp X_1^{V,W}=(x_1z)^G$ and the centralizer $G^{x_1z}=G^{x_1}$
  is abelian, $X_1^{V,W}$ is absolutely
  simple if and only if $\dim (X_1^{V,W})_{x_1z}=1$. Recall that
  $(X_1^{V,W})_{x_1z}=\Kspan\{\varphi _1(v_0\otimes w_1),\varphi
  _1(x_2v_0\otimes w_1)\}$. Thus Lemma~\ref{lem:deg2:X1_auxiliar} implies that
  $X_1^{V,W}$ is absolutely simple if and only if $(1-\sigma (z)\alpha )
  (1-\sigma (z)\alpha \beta )=0$.

	Let $w_1'=\varphi_1(x_2v_0\otimes w_1)$.
  Using Equation \eqref{eq:deg2:phi1(x2v0,w1)} and \eqref{eq:rho} we
	compute
	\begin{equation*}
		\label{eq:x1w1'}
			x_1 w_1' = (\alpha^2\beta^2 - \alpha^3\sigma(z) -
      \alpha^3\beta^2\sigma(z)) v_0\otimes w_1
			-\alpha\beta(1-\alpha\beta\sigma(z))x_2v_0\otimes w_1.
	\end{equation*}
  If $\sigma (z)\alpha =1$ then
  $$x_1w_1'=-\alpha^2v_0\otimes w_1-\alpha\beta(1-\beta)x_2v_0\otimes w_1
  =-\alpha\beta w_1',$$
  and if $\sigma (z)\alpha \beta =1$ then $x_1w_1'=-\alpha^2\beta v_0\otimes
  w_1=-\alpha w_1'$. In both cases we conclude that
  $x_1w_1'=-\alpha^2\beta\sigma(z)w_1'$. Since $z,x_1x_2x_3\in Z(G)$,
  $\sigma (x_1x_2x_3)=-\epsilon $, and $\rho (x_1x_2x_3)=-\alpha ^3$
  by Lemma~\ref{lem:deg2}, $\sigma _1$ has the claimed properties.
\end{proof}

\begin{lem}
	\label{lem:deg2:X2}
	Assume that $\sigma(x_1)=-1$ and $(1-\sigma (z)\alpha )(1-\sigma
  (z)\alpha \beta )=0$. Then $X_2^{V,W}=0$ if and only if $\rho(z)=-1$.
\end{lem}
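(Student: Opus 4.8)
The plan is to continue the explicit computation begun in Lemma~\ref{lem:deg2:X1_auxiliar} and Lemma~\ref{lem:deg2:X1}, now pushing one degree higher to $X_2^{V,W}=\varphi_2(V\otimes X_1^{V,W})$. By \cite[Lemma\,1.7]{MR2732989}, $X_2^{V,W}=\K G\{\varphi_2(v_0\otimes w_1'),\varphi_2(x_2v_0\otimes w_1')\}$, where $w_1'=\varphi_1(x_2v_0\otimes w_1)$ is the generator of $X_1^{V,W}$ identified in the previous lemmas. So the first step is to write down $\varphi_2$ explicitly using its recursive definition from Lemma~\ref{lem:X_n}, namely
\[
  \varphi_2=\id-c_{V\otimes W,V}\,c_{V,V\otimes W}+(\id\otimes\varphi_1)c_{1,2},
\]
and evaluate it on the two generators. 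Since $X_2^{V,W}$ vanishes exactly when both $\varphi_2(v_0\otimes w_1')$ and $\varphi_2(x_2v_0\otimes w_1')$ are zero, the whole statement reduces to a finite linear-algebra computation in $V\otimes V\otimes W$.

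Next I would exploit the structure already in hand to keep the computation manageable. We know $x_1w_1'=-\alpha^2\beta\sigma(z)\,w_1'$ from Lemma~\ref{lem:deg2:X1}, and the action of $G$ on $W$ (hence on the relevant tensors) is completely tabulated in Remark~\ref{rem:G_on_W}, while the action on $V$ is given by \eqref{eq:rho}. Because $z$ is central and acts on $V$ by $\rho(z)$ and on $W$ by $\sigma(z)$, every occurrence of $c_{V,W}$, $c_{W,V}$, and the braidings inside $\varphi_2$ can be reduced to scalar multiples of $\rho(z)$, $\sigma(z)$, $\alpha$, and $\beta$ times reindexed basis tensors. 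The braiding square $c\,c$ can moreover be handled uniformly via Lemma~\ref{lem:c^2}, since on these modules each group element acts on $V$-degrees and $W$-degrees by the appropriate scalars. Collecting the resulting vectors in the basis of $V^{\otimes 2}\otimes W$ indexed by the $v_0,x_2v_0$ choices in each $V$-slot and the $w_i$ in the $W$-slot, I expect $\varphi_2(x_2v_0\otimes w_1')$ to be the decisive generator (paralleling how $w_1'$, not $\varphi_1(v_0\otimes w_1)$, carried $X_1$). Imposing the standing hypothesis $(1-\sigma(z)\alpha)(1-\sigma(z)\alpha\beta)=0$ then splits the analysis into the two cases $\sigma(z)\alpha=1$ and $\sigma(z)\alpha\beta=1$, exactly as in the proof of Lemma~\ref{lem:deg2:X1}.

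The key algebraic mechanism I anticipate is that the coefficients assemble into an expression that factors through $1+\rho(z)$: in each case, the generator of $X_2^{V,W}$ will be a nonzero tensor multiplied by a scalar of the form $(\text{unit})\cdot(1+\rho(z))$. Since $\alpha,\beta\in\K^\times$ and $\beta^2+\beta+1=0$ with $\mathrm{char}\,\K\ne2$, all the accompanying factors are invertible, so the generator vanishes if and only if $1+\rho(z)=0$, i.e. $\rho(z)=-1$. Establishing that the other candidate generator $\varphi_2(v_0\otimes w_1')$ lies in $\K G$ times this same vector (so that it imposes no further condition) is a companion check of the same flavor. The hardest part will be the bookkeeping in the $(\id\otimes\varphi_1)c_{1,2}$ term of $\varphi_2$: this is where $w_1'$ gets re-expanded via \eqref{eq:deg2:phi1(v0,w1)} and \eqref{eq:deg2:phi1(x2v0,w1)}, and several monomials in $v_0\otimes v_0\otimes w_1$, $v_0\otimes x_2v_0\otimes w_1$, etc.\ must be collected without sign or $\beta$-power errors. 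The $\beta^2+\beta+1=0$ relation will be essential to force the cancellations that leave only the clean $(1+\rho(z))$ factor.
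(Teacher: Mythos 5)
Your proposal is correct and follows essentially the same route as the paper: the same reduction of $X_2^{V,W}$ to the generators $\varphi_2(v_0\otimes w_1')$ and $\varphi_2(x_2v_0\otimes w_1')$ via \cite[Lemma 1.7]{MR2732989}, with $\varphi_2(x_2v_0\otimes w_1')$ identified as decisive (the other generator being recovered from it through the $x_1$-action, since $x_1x_2v_0=\alpha\beta x_2v_0-\alpha^2\beta^2v_0$), the same case split $\sigma(z)\alpha=1$ versus $\sigma(z)\alpha\beta=1$, and the same mechanism whereby the result is $(1+\rho(z))$ times a vector with an invertible coefficient, giving vanishing if and only if $\rho(z)=-1$. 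The only cosmetic difference is that the paper's cancellations fall out directly from the recursion and Equations \eqref{eq:deg2:phi1(v0,w1)}--\eqref{eq:deg2:phi1(x2v0,w1)} without explicitly invoking $\beta^2+\beta+1=0$ at this stage, so your anticipated reliance on that relation here is unnecessary.
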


\begin{proof}
  {}From \cite[Lemma 1.7]{MR2732989} we conclude that
  $$X_2^{V,W} = \K G\{\varphi_2(v_0\otimes w_1'),\varphi_2(x_2v_0\otimes
    w_1')\}.$$
    Since $x_1w_1'=-\alpha ^2\beta \sigma (z)w_1'$ by
    Lemma~\ref{lem:deg2:X1} and $x_1x_2v_0=\alpha \beta
    x_2v_0-\alpha^2\beta^2v_0$, the vanishing of $X_2^{V,W}$ is equivalent to
    the vanishing of $\varphi _2(x_2v_0\otimes w_1')$.
    
	We first compute
	\begin{equation}
		\label{eq:c^2(x2v0,w1')}
		\begin{aligned}
			c_{X_1,V}c_{V,X_1}(x_2v_0\otimes w_1')
			&=\rho(z)\sigma(z)c_{X_1,V}(w_1'\otimes x_2v_0)\\
			&=\rho(z)^2\sigma(z)(-\alpha^2\beta^2 v_0+\alpha\beta x_2v_0)\otimes w_1'.
		\end{aligned}
	\end{equation}
	Assume first that $\sigma(z)\alpha\beta=1$. Then
  $w_1'=\varphi_1(x_2v_0\otimes w_1)=\alpha\beta v_0\otimes w_1$.
	Since $\varphi_2=\id-c_{X_1,V}c_{V,X_1}+(\id\otimes\varphi_1)c_{1,2}$,
  Equation \eqref{eq:c^2(x2v0,w1')} implies that
	\[
		\varphi_2(x_2v_0\otimes w_1')=\alpha\beta\rho(z)(1+\rho(z))v_0\otimes
    w_1'+(1-\rho(z)^2)x_2v_0\otimes w_1'.
	\]

  Assume now that $\sigma(z)\alpha=1$. Then
  $w_1'=(1-\beta)x_2v_0\otimes w_1+\alpha\beta^2 v_0\otimes w_1$
  by Lemma \ref{lem:deg2:X1_auxiliar}.
  Using Equation \eqref{eq:c^2(x2v0,w1')}
  one obtains that
	\[
		\varphi_2(x_2v_0\otimes w_1')=(1+\rho(z))(\alpha\beta^2\rho(z)v_0\otimes
    w_1'+(1-\beta\rho(z))x_2v_0\otimes w_1').
	\]
  In both cases, $\varphi_2(x_2v_0\otimes w_1')=0$
  if and only if $\rho(z)=-1$.
\end{proof}

\begin{proof}[Proof of Proposition \ref{pro:deg=1}]
	Since $(V,W)$ admits all reflections and $\mathcal{W}(V,W)$ is finite,
	$(\ad W)^m(V)$ is absolutely simple or zero for all $m\in\N_0$ by
	\cite[Thm.\,7.2(3)]{MR2734956}.  The Cartan matrix of $(V,W)$ is
	non-diagonal and hence $(\ad W)(V)\ne0$. Then Lemma~\ref{lem:sigma(x1)=-1}
	implies that $a^{(V,W)}_{2,1}\leq-3$ and $\sigma (x_1)=-1$. Therefore
	$a^{(V,W)}_{1,2}=-1$ and $a^{(V,W)}_{2,1}=-3$ by assumption.
	Hence $X_m^{V,W}=0$ if and only if $m\ge 2$ and
	$X_m^{W,V}=0$ if and only if $m\ge 4$ by the definition of the entries of the
    Cartan matrix $A^{(V,W)}$.
	Further, $\deg\rho\leq2$
	by Lemma~\ref{lem:degrees}. 

    Suppose that $\deg\rho=2$.
    Then
    \begin{equation} 
        \label{eq:deg2}
        (\alpha\sigma(z)-1)(\alpha\beta\sigma(z)-1)=0 
    \end{equation} 
    by Lemma~\ref{lem:deg2:X1}, and $\rho(z)=-1$ by Lemma \ref{lem:deg2:X2}.
    From
    $a_{1,2}^{(V,W)}=-1$ we obtain that  $R_1(V,W)=\left(V^*,X_1^{V,W}\right)$.
    Since $\supp X_1^{V,W}=(x_1z)^G\simeq\chi_1^T$ and $\supp V^*\simeq\supp V$
    as quandles,
    Lemma~\ref{lem:sigma(x1)=-1}
    implies that $(\ad X_1^{V,W})^3(V^*)$ is non-zero.
    Then $(\ad X_1^{V,W})^3(V^*)$ is
    absolutely simple by \cite[Thm.\,7.2(3)]{MR2734956}.
    Now Lemma~\ref{lem:sigma(x1)=-1} implies that $\sigma_1(x_1z)=-1$.
    By Lemma~\ref{lem:deg2:X1}, 
    \[
        -1=\sigma_1(x_1z)=-\alpha^2\beta\sigma(z)^2\rho(z)=\alpha^2\beta\sigma(z)^2.
    \]
    On the other hand $\alpha^2\beta\sigma(z)^2\in\{\beta,\beta^2\}$ by
    Equation \eqref{eq:deg2}. This contradicts to $1+\beta+\beta^2=0$. Therefore $\deg\rho=1$. 
\end{proof}

\section{Proof of Theorem \ref{thm:T}}
\label{section:T}

As in Subsection~\ref{subsection:T:preliminaries}, let $V=M(z,\rho)$ and
$W=M(x_1,\sigma)$, and assume that $\deg\rho =1$.
Then
$\rho(x_1)=\rho(x_2)=\rho(x_3)=\rho(x_4)$ since $x_1,x_2,x_3,x_4$ are conjugate
elements.
We write $X_n=X_n^{V,W}$ and
$\varphi_n=\varphi_n^{V,W}$ for all $n\in\N_0$ if no confusion can arise. 

\begin{lem}
	\label{lem:X1}
    Assume that $\sigma(x_1)=-1$. Then 
    $X_1^{V,W}$ is  non-zero if and only if 
	$\rho(x_1)\sigma(z)\ne1$. In this case, 
    $X_1^{V,W}$ is absolutely simple and
    $X_1^{V,W}\simeq M(x_1z,\sigma_1)$, where $\sigma_1$ is an
	absolutely irreducible representation of the centralizer
	$G^{x_1z}=G^{x_1}$ and 
	\begin{align*} 
	  \sigma_1(x_1)=&\;-\rho(x_1),&
	  \sigma_1(x_1x_2x_3)=&\;-\epsilon \rho(x_1)^3,&
	  \sigma_1(z)=&\;\rho(z)\sigma(z).
	\end{align*}
    For $i\in\{1,2,3,4\}$ let $w_i'\coloneqq v\otimes w_i$. Then
    $w_1',w_2',w_3',w_4'$ is a basis of $X_1^{V,W}$. The degrees of these
    basis vectors are $x_1z$, $x_2z$, $x_3z$ and $x_4z$, respectively.
\end{lem}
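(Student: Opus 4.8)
The plan is to compute $X_1^{V,W}=\varphi_1(V\otimes W)$ directly, using the definition of $\varphi_1=\id-c_{W,V}c_{V,W}$ and the action of $G$ on $W$ recorded in Remark~\ref{rem:G_on_W} (with $\sigma(x_1)=-1$). Since $\deg\rho=1$, the module $V$ is one-dimensional, spanned by a single vector $v$ with $\supp V=z^G=\{z\}$, so $V_z=\K v$ and $xv=\rho(x)v$ for all $x\in G$; in particular $\rho(x_1)=\rho(x_2)=\rho(x_3)=\rho(x_4)$ as the $x_i$ are conjugate. By \cite[Lemma\,1.7]{MR2732989}, $X_1^{V,W}=\K G\{\varphi_1(v\otimes w_1)\}$. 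First I would apply Lemma~\ref{lem:c^2} (with $q_V=\rho(x_1)$, $q_W=-1$, reading off $q_V,q_W$ from the scalar action of group elements on $V$ and $W$) to get a clean formula for $c_{W,V}c_{V,W}(v\otimes w_i)$, and hence for $\varphi_1(v\otimes w_i)=v\otimes w_i-c_{W,V}c_{V,W}(v\otimes w_i)$.

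\textbf{Identifying the basis vectors and the nonvanishing criterion.}
Applying Lemma~\ref{lem:c^2} to $v\otimes w_i\in(V\otimes W)_{zx_i}$ gives $c_{W,V}c_{V,W}(v\otimes w_i)=\rho(x_1)^{-1}\,zx_i\,v\otimes(-1)^{-1}zx_i\,w_i$. Since $v$ has degree $z$ and $w_i$ has degree $x_i$, the action $zx_i$ on $v$ is $\rho(zx_i)=\rho(z)\rho(x_1)$, and on $w_i$ it is $\sigma(z)\cdot(x_iw_i)$. Reading $x_iw_i$ from the table in Remark~\ref{rem:G_on_W} (the diagonal entries are $-w_1,-w_2,-w_3,-w_4$), one finds $zx_iw_i=-\sigma(z)w_i$, so each $v\otimes w_i$ is an eigenvector and $\varphi_1(v\otimes w_i)=(1-\rho(x_1)\sigma(z))\,v\otimes w_i$. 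This already shows $X_1^{V,W}\ne0$ if and only if $\rho(x_1)\sigma(z)\ne1$, and when nonzero, $w_i'\coloneqq v\otimes w_i$ (up to the common nonzero scalar $1-\rho(x_1)\sigma(z)$) spans the image. The degree of $w_i'=v\otimes w_i$ is $zx_i=x_iz$, since $z$ is central, which gives the claimed degrees $x_1z,x_2z,x_3z,x_4z$.

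\textbf{Absolute simplicity and the representation $\sigma_1$.}
For absolute simplicity, I would note that $\supp X_1^{V,W}=(x_1z)^G\simeq x_1^G$ as a quandle (multiplying by the central element $z$), and the centralizer $G^{x_1z}=G^{x_1}=\langle x_1,x_2x_3,z\rangle$ is abelian by Lemma~\ref{lem:centralizers}. Hence $X_1^{V,W}\cong M(x_1z,\sigma_1)$ for a one-dimensional $\sigma_1$, and it suffices to compute $\sigma_1$ on the generators $x_1$, $x_1x_2x_3$, $z$ by acting on $w_1'=v\otimes w_1$. Using $x_1(v\otimes w_1)=\rho(x_1)v\otimes x_1w_1=\rho(x_1)v\otimes(-w_1)=-\rho(x_1)w_1'$ gives $\sigma_1(x_1)=-\rho(x_1)$. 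Since $x_1x_2x_3\in Z(G)$ with $\rho(x_1x_2x_3)=\rho(x_1)^3$ and $\sigma(x_1x_2x_3)=\sigma(x_1)\sigma(x_2x_3)=-\epsilon$ (recalling $\epsilon=\sigma(x_2x_3)$ and $\sigma(x_1)=-1$), acting on $w_1'$ yields $\sigma_1(x_1x_2x_3)=\rho(x_1)^3\cdot(-\epsilon)=-\epsilon\rho(x_1)^3$; and $z(v\otimes w_1)=\rho(z)\sigma(z)w_1'$ gives $\sigma_1(z)=\rho(z)\sigma(z)$. The computations here are entirely routine; the only point requiring care is the bookkeeping of the central-element actions and the correct identification of $\supp X_1^{V,W}$ as a quandle isomorphic to $x_1^G$, which is what guarantees the one-dimensionality of the homogeneous component and hence absolute simplicity. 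I do not expect any genuine obstacle, since the degree-one hypothesis collapses all the tensor structure to scalar multiplication.
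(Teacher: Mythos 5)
Your proof is correct and follows essentially the same route as the paper: reduce to $\varphi_1(v\otimes w_1)$ via \cite[Lemma 1.7]{MR2732989}, obtain the common eigenvalue $1-\rho(x_1)\sigma(z)$ on the vectors $v\otimes w_i$, and read off $\sigma_1$ from the action of $x_1$ and of the central elements $z$ and $x_1x_2x_3$, with absolute simplicity coming from the one-dimensionality of the homogeneous components together with the abelian centralizer $G^{x_1z}=G^{x_1}$. One slip to repair in your invocation of Lemma~\ref{lem:c^2}: its hypothesis requires $q_V$ to be the scalar by which the \emph{degree} of a homogeneous vector acts on it, so since $\supp V=\{z\}$ you must take $q_V=\rho(z)$, not $\rho(x_1)$; as written, your displayed formula evaluates to $c_{W,V}c_{V,W}(v\otimes w_i)=\rho(z)\sigma(z)\,v\otimes w_i$, which contradicts the (correct) eigenvalue $1-\rho(x_1)\sigma(z)$ that you then state and that your subsequent direct evaluation of the actions of $zx_i$ on $v$ and $w_i$ in fact yields. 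Finally, a small point of attribution: the one-dimensionality of $(X_1^{V,W})_{x_iz}$ is not guaranteed by the quandle identification of the support but by the inclusion $X_1^{V,W}\subseteq V\otimes W$ together with $\dim V_z=\dim W_{x_i}=1$ (this is also how the paper argues, via $X_1^{V,W}\subseteq V\otimes W$); with that observation your argument is complete.
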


\begin{proof}
    By \cite[Lemma 1.7]{MR2732989}, $X_1^{V,W}=\varphi_1(V\otimes W)=\K
    G\varphi_1(v\otimes w_1)$.  Then 
	\begin{equation}
		\label{eq:phi1(v,w1)}
		\varphi_1(v\otimes w_1) = (\id-c_{W,V}c_{V,W})(v\otimes w_1)
		=(1-\rho(x_1)\sigma(z))v\otimes w_1.
	\end{equation}
	Hence $v\otimes w_1\in(V\otimes W)_{x_1z}$ is non-zero if and only if
  $\rho (x_1)\sigma (z)\not=1$. Further,
	\begin{align*}
		x_1w_1' = x_1v\otimes x_1w_1 =-\rho(x_1)v\otimes w_1=-\rho(x_1)w_1'.
	\end{align*}
  The remaining claims on $\sigma _1$ follow from the absolute irreducibility
  of $V$ and $W$ and the facts that $x_1x_2x_3,z\in Z(G)$ and
  $X_1^{V,W}\subseteq V\otimes W$.
\end{proof}

\begin{rem}
	\label{rem:G_on_X1}
    To compute the action of $G$ on $X_1^{V,W}$ one has to note that
	\[
		x_i w_j'=x_i(v\otimes w_j)=x_iv\otimes x_iw_j=\rho(x_1)v\otimes x_iw_j
	\]
	and then use the action of $G$ on $W$ of Remark \ref{rem:G_on_W}.
\end{rem}

\begin{lem}
	\label{lem:X2}
    Assume that $\sigma(x_1)=-1$ and $\rho(x_1)\sigma(z)\ne1$. Then 
    $X_2^{V,W}=0$ if and only if $(1+\rho(z))(1-\rho(x_1z)\sigma(z))=0$.
\end{lem}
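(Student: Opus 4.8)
The plan is to reduce the vanishing of $X_2^{V,W}$ to the vanishing of a single explicit vector, and then to compute the scalar by which $\varphi_2$ acts on that vector and factor it. Since $\deg\rho=1$, the module $V$ is one-dimensional, spanned by $v\in V_z$, and by Lemma~\ref{lem:X1} the module $X_1^{V,W}$ is generated over $\K G$ by $w_1'=v\otimes w_1\in(X_1^{V,W})_{x_1z}$. Hence \cite[Lemma 1.7]{MR2732989} gives $X_2^{V,W}=\varphi_2(V\otimes X_1^{V,W})=\K G\,\varphi_2(v\otimes w_1')$, so $X_2^{V,W}=0$ if and only if $\varphi_2(v\otimes w_1')=0$. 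As $v\otimes w_1'=v\otimes v\otimes w_1\ne0$, it then suffices to evaluate that scalar.

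Next I would expand $\varphi_2(v\otimes w_1')$ via the recursion of Lemma~\ref{lem:X_n}, namely $\varphi_2=\id-c_{V\otimes W,V}c_{V,V\otimes W}+(\id\otimes\varphi_1)c_{1,2}$, restricted to $V\otimes X_1^{V,W}\subseteq V\otimes(V\otimes W)$. The third summand is routine: $c_{1,2}$ acts on the first two tensor legs (both equal to $V$) by $c_{V,V}(v\otimes v)=\rho(z)\,v\otimes v$, and then $\id\otimes\varphi_1$ reproduces $\varphi_1(v\otimes w_1)=(1-\rho(x_1)\sigma(z))w_1'$ from \eqref{eq:phi1(v,w1)}, giving the contribution $\rho(z)(1-\rho(x_1)\sigma(z))\,v\otimes w_1'$. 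For the middle (double-braiding) summand, the key observation is that $X_1^{V,W}$ satisfies the hypothesis of Lemma~\ref{lem:c^2}: using Remark~\ref{rem:G_on_X1} together with the table of Remark~\ref{rem:G_on_W}, one checks that $g$ acts on $(X_1^{V,W})_g$ by the single scalar $-\rho(x_1)\rho(z)\sigma(z)$ for every $g\in\supp X_1^{V,W}$. Since $V$ likewise acts by the scalar $\rho(z)$ on $V_z$, Lemma~\ref{lem:c^2} applies (with its $W$ taken to be $X_1^{V,W}$) and shows that $c_{V\otimes W,V}c_{V,V\otimes W}$ acts on $v\otimes w_1'$ by a scalar, which I would read off as $\rho(x_1)\rho(z)^2\sigma(z)$.

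Assembling the three contributions, $\varphi_2(v\otimes w_1')$ equals $\bigl(1-\rho(x_1)\rho(z)^2\sigma(z)+\rho(z)(1-\rho(x_1)\sigma(z))\bigr)\,v\otimes w_1'$. I would then factor this scalar as $(1+\rho(z))(1-\rho(x_1)\rho(z)\sigma(z))=(1+\rho(z))(1-\rho(x_1z)\sigma(z))$, using that $\rho$ is a character, which gives exactly the stated criterion. This mirrors the two-case computation carried out in Lemma~\ref{lem:deg2:X2}, but is uniform here because $V$ is one-dimensional. The only genuinely delicate point is the bookkeeping in the double braiding; I expect the main obstacle to be verifying the scalar hypothesis of Lemma~\ref{lem:c^2} for $X_1^{V,W}$ and correctly tracking the powers of $\rho(z)$ and $\sigma(z)$, after which the factorization into $(1+\rho(z))$ and $(1-\rho(x_1z)\sigma(z))$ is immediate.
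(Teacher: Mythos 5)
Your proof is correct and follows essentially the same route as the paper: reduce to the single generator $\varphi_2(v\otimes w_1')$ via \cite[Lemma 1.7]{MR2732989}, expand $\varphi_2=\id-c_{X_1,V}c_{V,X_1}+(\id\otimes\varphi_1)c_{1,2}$, and factor the resulting scalar as $(1+\rho(z))(1-\rho(x_1z)\sigma(z))$. The only (harmless) difference is that you evaluate the double braiding by verifying the hypotheses of Lemma~\ref{lem:c^2} for $X_1^{V,W}$, whereas the paper computes $c_{X_1,V}c_{V,X_1}(v\otimes w_1')=\rho(z)\rho(x_1z)\sigma(z)\,v\otimes w_1'$ directly from the degrees and the formulas for $\sigma_1$; both give the same scalar $\rho(x_1)\rho(z)^2\sigma(z)$.
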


\begin{proof}
  Let $w_1'=v\otimes w_1$. Then
  $X_2^{V,W}=\varphi_2(V\otimes X_1) =\K G\varphi_2(v\otimes w_1')$
  by \cite[Lemma 1.7]{MR2732989}.
	Since $\varphi_2=\id-c_{X_1,V}c_{V,X_1}+(\id\otimes\varphi_1)c_{1,2}$,
	we first compute
	\[
		c_{X_1,V}c_{V,X_1}(v\otimes w_1')
		=c_{X_1,V}(zw_1'\otimes v)
		=x_1zv\otimes zw_1'
		=\rho(z)\rho(x_1z)\sigma(z) v\otimes w_1'.
	\]
	Now using Equation \eqref{eq:phi1(v,w1)} we compute 
	\begin{align*}
		(\id\otimes\varphi_1)c_{1,2}(v\otimes w_1') &=
    (\id\otimes\varphi_1)c_{1,2}(v\otimes v\otimes w_1)\\
		& =(\id\otimes\varphi_1)(\rho(z)v\otimes v\otimes w_1)\\
		& =\rho(z)(1-\rho(x_1)\sigma(z)) v\otimes v\otimes w_1.
	\end{align*}
	Hence $\varphi_2(v\otimes w_1')=(1+\rho(z))(1-\rho(x_1z)\sigma(z)) v\otimes
	w_1'$. This implies the claim.
\end{proof}

Now we compute the adjoint actions $(\ad W)^m(V)$ for $m\in\{2,3,4\}$. We
write $X_n=X_n^{W,V}$ and $\varphi_n=\varphi_n^{W,V}$ for all $n\in\N_0$ if no
confusion can arise.

\begin{rem}
	\label{rem:Y1}
	By \cite[Lemma 1.7]{MR2732989}, 
	\[
    X_1^{W,V}=\varphi_1(W\otimes V)=\K G\varphi_1(w_1\otimes v).
	\]
  Moreover, for all $i\in \{1,2,3,4\}$ we obtain that
	\begin{align}
		\label{eq:phi1(w1,v)}\varphi_1(w_i\otimes v)&=
    (1-\rho(x_1)\sigma(z))w_i\otimes v.
	\end{align}
\end{rem}

\begin{lem}
	\label{lem:Y1}
	Assume that $\rho(x_1)\sigma(z)\ne1$ and $\sigma(x_1)=-1$.  Then $X_1^{W,V}$ is
	absolutely simple.  Moreover, $X_1^{W,V}\simeq M(x_1z,\rho_1)$, where $\rho_1$ is
	an absolutely irreducible representation of $G^{x_1z}=G^{x_1}$ with
	\begin{align*}
		\rho_1(x_1)&=-\rho(x_1),&
		\rho_1(x_1x_2x_3)&=-\epsilon\rho(x_1)^3,&
		\rho_1(z)&=\rho(z)\sigma(z).
	\end{align*}
	For all $i\in \{1,2,3,4\}$ let $v_i'= w_i\otimes v$. Then
        $v_i'\in (X_1^{W,V})_{x_iz}$ for all $i$, and
        $v_1',v_2',v_3',v_4'$ form a basis of $X_1^{W,V}$. 
\end{lem}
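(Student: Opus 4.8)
The plan is to mirror the structure of the already-proven Lemma~\ref{lem:X1}, which handles $X_1^{V,W}$, since the roles of $V$ and $W$ are interchanged but the central group elements $z$ and $x_1x_2x_3$ behave identically. First I would invoke \cite[Lemma 1.7]{MR2732989} to write $X_1^{W,V}=\varphi_1(W\otimes V)=\K G\varphi_1(w_1\otimes v)$, exactly as recorded in Remark~\ref{rem:Y1}. From Equation~\eqref{eq:phi1(w1,v)} we have $\varphi_1(w_i\otimes v)=(1-\rho(x_1)\sigma(z))w_i\otimes v$, and since $\rho(x_1)\sigma(z)\ne1$ by hypothesis, each $w_i\otimes v$ is a nonzero element of $X_1^{W,V}$. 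Because $\deg\rho=1$, the tensor $w_i\otimes v$ is just a scalar multiple of $v_i'$ and lies in degree $x_iz$, so the four vectors $v_1',v_2',v_3',v_4'$ are linearly independent (their degrees $x_1z,\dots,x_4z$ are distinct as the $x_i$ are pairwise distinct in $G$) and span $X_1^{W,V}$, giving the asserted basis.

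Next I would establish absolute simplicity. The support is $\supp X_1^{W,V}=(x_1z)^G$, and the centralizer $G^{x_1z}=G^{x_1}=\langle x_1,x_2x_3,z\rangle$ is abelian by Lemma~\ref{lem:centralizers}. A Yetter-Drinfeld module supported on a single conjugacy class with abelian isotropy is absolutely simple precisely when its isotypic component in a fixed degree is one-dimensional; here $(X_1^{W,V})_{x_1z}=\Kspan\{v_1'\}$ is one-dimensional, so $X_1^{W,V}$ is absolutely simple and isomorphic to $M(x_1z,\rho_1)$ for a one-dimensional $\rho_1$ of $G^{x_1z}$.

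It remains to compute the character $\rho_1$. For $\rho_1(x_1)$ I would act by $x_1$ on the generator $v_1'=w_1\otimes v$, using that $G$ acts diagonally on a tensor product: $x_1v_1'=x_1w_1\otimes x_1v=(-w_1)\otimes(\rho(x_1)v)=-\rho(x_1)v_1'$ via Remark~\ref{rem:G_on_W} (where $\sigma(x_1)=-1$ gives $x_1w_1=-w_1$) and $\deg\rho=1$. For the two central generators $x_1x_2x_3,z\in Z(G)$, the values follow from the fact that $X_1^{W,V}\subseteq W\otimes V$ and central elements act as scalars on each tensor factor by the absolute irreducibility of $V$ and $W$: we get $\rho_1(z)=\sigma(z)\rho(z)$ and $\rho_1(x_1x_2x_3)=\sigma(x_1x_2x_3)\rho(x_1x_2x_3)=(-\epsilon)\rho(x_1)^3=-\epsilon\rho(x_1)^3$, using $\sigma(x_1)=-1$ together with $\sigma(x_2x_3)=\epsilon$ and $\rho(x_i)=\rho(x_1)$ for all $i$. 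This is formally identical to the computation already carried out for $\sigma_1$ in Lemma~\ref{lem:X1}, which is why I expect no genuine obstacle. The only mild subtlety is confirming that $\rho_1$ is well defined on the abelian centralizer, i.e.\ that the scalar actions of $x_1$, $x_2x_3$, and $z$ are mutually consistent; this is automatic because $X_1^{W,V}$ is one-dimensional in each degree and $G^{x_1z}$ fixes the degree $x_1z$, so every element of the centralizer acts as a scalar on the line $\Kspan\{v_1'\}$.
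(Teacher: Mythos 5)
Your argument is correct, but it is not the route the paper takes. The paper's entire proof is one line: the braiding $c_{V,W}\colon V\otimes W\to W\otimes V$ intertwines $\varphi_1^{V,W}$ with $\varphi_1^{W,V}$ (since $c_{V,W}\circ(\id-c_{W,V}c_{V,W})=(\id-c_{V,W}c_{W,V})\circ c_{V,W}$), hence restricts to an isomorphism $X_1^{V,W}\to X_1^{W,V}$ in $\ydG$, and everything --- absolute simplicity, the character values of $\rho_1$, and the basis (as $c_{V,W}(v\otimes w_i)=\sigma(z)\,w_i\otimes v$) --- is transferred from Lemma~\ref{lem:X1}, where the identical character $\sigma_1$ was already computed. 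You instead redo the computation from scratch: nonvanishing of $\varphi_1(w_i\otimes v)=(1-\rho(x_1)\sigma(z))\,w_i\otimes v$ from Remark~\ref{rem:Y1}, linear independence of the $v_i'$ from their pairwise distinct degrees $x_iz$, spanning from cyclicity $X_1^{W,V}=\K G\,\varphi_1(w_1\otimes v)$ together with the fact that $G$ permutes the $w_i$ up to scalars and acts on $v$ by scalars, absolute simplicity from $\dim(X_1^{W,V})_{x_1z}=1$ with abelian centralizer $G^{x_1z}=G^{x_1}$ (the criterion the paper itself uses, e.g.\ in Lemma~\ref{lem:deg2:X1}), and the three character values by direct action on $v_1'$, correctly using $\sigma(x_1x_2x_3)=-\epsilon$ and $\rho(x_i)=\rho(x_1)$. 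All steps check out, including the implicit use of the section's standing assumption $\deg\rho=1$. What the paper's approach buys is brevity and immunity from sign slips, since no computation is repeated; what your approach buys is a self-contained verification that in particular exhibits the basis vectors $v_i'=w_i\otimes v$ and their degrees directly, rather than tracing them through the isomorphism $c_{V,W}$.
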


\begin{proof}
  Since $c_{V,W}:X_1^{V,W}\to X_1^{W,V}$ is an isomorphism in $\ydG $,
  the claim follows from Lemma~\ref{lem:X1}.
\end{proof}

\begin{rem}
	\label{rem:G_on_Y1}
	Assume that $\rho(x_1)\sigma(z)\ne1$ and $\sigma(x_1)=-1$.
	For $j\in \{1,2,3,4\}$ let $v_j'=w_j\otimes v$.
        Remark~\ref{rem:G_on_W} implies that the action of $G$ on $X_1^{W,V}$
        is given by $zv_j'=\rho(z)\sigma(z)v_j'$ for all $j\in\{1,2,3,4\}$ and 
	\begin{align*}
		x_iv_j'=\begin{cases}
			-\rho(x_1) v_{i\triangleright j}' & \text{if $i=1$ or $j=1$ or $i=j$,}\\
			-\epsilon\rho(x_1) v_{i\triangleright j}' & \text{otherwise.}
		\end{cases}
	\end{align*}
\end{rem}

By \cite[Lemma 1.7]{MR2732989}, 
\[
X_2^{W,V}=\varphi_2(W\otimes X_1^{W,V})
    =\K G\{\varphi_2(w_1\otimes v_1'),\varphi_2(w_2\otimes v_1')\}.
\]
The two generators are computed in the following lemma.
	
\begin{lem}
	\label{lem:phi2}
	Assume that $\rho(x_1)\sigma(z)\ne1$ and $\sigma(x_1)=-1$. Then
	\begin{align}
		&\varphi_2(w_1\otimes v_1')=0,\\
		&\label{eq:phi2(w2,v1')}
		\begin{aligned}
			\varphi_2(w_2\otimes v_1')=w_2\otimes v_1'&-\epsilon\rho(x_1)\sigma(z) w_1\otimes v_3'\\
			&-(1-\rho(x_1)\sigma(z))w_3\otimes v_2'.
		\end{aligned}
	\end{align}
	Moreover, $v_1''\coloneqq x_2\varphi_2(w_2\otimes v_1')\in (W\otimes W\otimes
	V)_{x_2x_3z}$ is non-zero. 
\end{lem}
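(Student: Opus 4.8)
The plan is to obtain all three assertions by direct evaluation of the defining formula
\[
  \varphi_2 = \id - c_{W\otimes V,W}\,c_{W,W\otimes V} + (\id\otimes\varphi_1)c_{1,2}
\]
from Lemma~\ref{lem:X_n} on the two generators $w_1\otimes v_1'$ and $w_2\otimes v_1'$ of $W\otimes X_1^{W,V}\subseteq W\otimes W\otimes V$, where $v_j'=w_j\otimes v$. The only inputs are the braiding rule $c_{M,N}(u\otimes u')=(g\cdot u')\otimes u$ for $u\in M_g$, the action of $G$ on $W$ from Remark~\ref{rem:G_on_W}, the action of $G$ on $X_1^{W,V}$ from Remark~\ref{rem:G_on_Y1}, and the identity $\varphi_1(w_j\otimes v)=(1-\rho(x_1)\sigma(z))w_j\otimes v$ from Remark~\ref{rem:Y1}. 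Since $\deg\rho=1$, the module $V$ is one-dimensional and $x_iv=\rho(x_1)v$ for all $i$.

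First I would evaluate the three summands of $\varphi_2(w_1\otimes v_1')$. For the middle term, $c_{W,W\otimes V}(w_1\otimes v_1')=(x_1v_1')\otimes w_1$, and since $x_1v_1'=-\rho(x_1)v_1'$ by Remark~\ref{rem:G_on_Y1} and $x_1z\cdot w_1=-\sigma(z)w_1$, applying $c_{W\otimes V,W}$ collapses this to $\rho(x_1)\sigma(z)\,w_1\otimes v_1'$. For the recursion term, $c_{1,2}(w_1\otimes w_1\otimes v)=-w_1\otimes w_1\otimes v$ by $x_1w_1=-w_1$, and then $\id\otimes\varphi_1$ scales the tail, giving $-(1-\rho(x_1)\sigma(z))w_1\otimes v_1'$. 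Adding $w_1\otimes v_1'$ and these two contributions, the coefficients telescope to $1-\rho(x_1)\sigma(z)-(1-\rho(x_1)\sigma(z))=0$. The computation of $\varphi_2(w_2\otimes v_1')$ is entirely parallel: the middle term now uses $x_2v_1'=-\rho(x_1)v_3'$ and $x_3z\cdot w_2=-\epsilon\sigma(z)w_1$, producing $\epsilon\rho(x_1)\sigma(z)\,w_1\otimes v_3'$, while $c_{1,2}$ sends $w_2\otimes w_1$ to $-w_3\otimes w_2$ by $x_2w_1=-w_3$, so the recursion term equals $-(1-\rho(x_1)\sigma(z))w_3\otimes v_2'$; collecting the three contributions yields exactly \eqref{eq:phi2(w2,v1')}.

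For the final assertion I would apply $x_2$ to \eqref{eq:phi2(w2,v1')} through the diagonal $G$-action on $W\otimes W\otimes V$, reading the images of $w_1,w_2,w_3$ off Remark~\ref{rem:G_on_W} and using $x_2v=\rho(x_1)v$ and $\epsilon^2=1$. Factoring out the overall scalar $\rho(x_1)$, the three summands become $w_2\otimes w_3\otimes v$, then $-\rho(x_1)\sigma(z)\,w_3\otimes w_4\otimes v$, and $-(1-\rho(x_1)\sigma(z))\epsilon\,w_4\otimes w_2\otimes v$. The relations $x_3x_4=x_2x_3$ and $x_4x_2=x_3x_4$ (instances of $x_ix_j=x_{i\trid j}x_i$) show that all three tensors lie in the graded component of degree $x_2x_3z$, whence $v_1''\in(W\otimes W\otimes V)_{x_2x_3z}$. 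Since these are three distinct basis vectors and the leading coefficient $\rho(x_1)\in\K^\times$ is non-zero, $v_1''\neq0$.

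The computation is conceptually routine; the only delicate point is the middle (square-braiding) term, where $w_i$ must first be pushed to the right through $v_1'$ using the $X_1^{W,V}$-action of Remark~\ref{rem:G_on_Y1} and then back to the left through $w_i$ at degree $x_iz$, which is where the signs and the factor $\epsilon$ enter. Correctly propagating these signs, carrying the scalar $1-\rho(x_1)\sigma(z)$ through the recursion term, and handling the factor $\epsilon$ (together with $\epsilon^2=1$) in the final application of $x_2$ is the only real bookkeeping obstacle.
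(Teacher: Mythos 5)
Your proposal is correct and takes essentially the same approach as the paper: direct evaluation of the recursion $\varphi_2=\id-c_{X_1,W}c_{W,X_1}+(\id\otimes\varphi_1)c_{1,2}$ on the two generators using Remarks \ref{rem:G_on_W}, \ref{rem:Y1} and \ref{rem:G_on_Y1}, with all signs, the scalar $1-\rho(x_1)\sigma(z)$, and the factor $\epsilon$ (via $\epsilon^2=1$) handled correctly. The only cosmetic difference is in the last claim, where you expand $v_1''$ explicitly (in effect reproducing Remark \ref{rem:v1''}) and verify the degree via $x_3x_4=x_4x_2=x_2x_3$, whereas the paper notes more briefly that the invertible action of $x_2$ carries the non-zero vector $\varphi_2(w_2\otimes v_1')\in(W\otimes W\otimes V)_{x_2x_1z}$ to a non-zero vector of degree $x_2\triangleright(x_2x_1z)=x_2x_3z$.
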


\begin{proof}
    We first prove that $\varphi_2(w_1\otimes v_1')=0$. Lemma~\ref{lem:Y1} implies that
	\begin{equation*}
		c_{X_1,W}c_{W,X_1}(w_1\otimes v_1') = c_{X_1,W}(x_1v_1'\otimes w_1)
		=-\rho_1(x_1)\sigma(z)w_1\otimes v_1'.
	\end{equation*}
	Then we compute
	\begin{align*}
		(\id\otimes\varphi_1)c_{1,2}(w_1\otimes v_1') &= (\id\otimes\varphi_1)c_{1,2}(w_1\otimes w_1\otimes v)\\
		&=(\id\otimes\varphi_1)(x_1w_1\otimes w_1\otimes v)\\
		&=-(1-\rho(x_1)\sigma(z))w_1\otimes v_1'
	\end{align*}
    using Equation \eqref{eq:phi1(w1,v)}.  Since $\rho_1(x_1)=-\rho(x_1)$ by
    Lemma \ref{lem:Y1}, we conclude that $\varphi_2(w_1\otimes v_1')=0$. Now we prove 
    Equation \eqref{eq:phi2(w2,v1')}. First we use Lemma
    \ref{lem:Y1} and Remark \ref{rem:G_on_Y1} to compute
	\[
		c_{X_1,W}c_{W,X_1}(w_2\otimes v_1')=x_3zw_2\otimes x_2v_1'=\epsilon\rho(x_1)\sigma(z) w_1\otimes v_3'.
	\]
	Then using Equation \eqref{eq:phi1(w1,v)} we obtain that
	\[
		(\id\otimes\varphi_1)c_{1,2}(w_2\otimes v_1')=-w_3\otimes (1-\rho(x_1)\sigma(z))v_2'. 
	\]
    These equations imply Equation \eqref{eq:phi2(w2,v1')}.  Now
    $\varphi_2(w_2\otimes v_1')\in(W\otimes W\otimes V)_{x_2x_1z}$ is non-zero,
    and hence $x_2\varphi_2(w_2\otimes v_1')\in (W\otimes W\otimes
    V)_{x_2x_3z}$ is non-zero.
\end{proof}

\begin{rem}
	\label{rem:v1''}
	Equation \eqref{eq:phi2(w2,v1')} and Remarks
	\ref{rem:G_on_W} and \ref{rem:G_on_Y1} imply that
	\begin{equation}
		\begin{aligned}
		\label{eq:v1''}
		v_1''=\rho(x_1)w_2\otimes v_3'&-\rho(x_1)^2\sigma(z) w_3\otimes v_4'\\
		&-\epsilon\rho(x_1)(1-\rho(x_1)\sigma(z))w_4\otimes v_2'.
		\end{aligned}
	\end{equation}
\end{rem}

\begin{lem}
  \label{lem:Y2}
  Assume that $\rho(x_1)\sigma(z)\ne1$ and $\sigma(x_1)=-1$. Then $X_2^{W,V}$ is
  absolutely simple if and only if
    \begin{gather}
      \label{eq:Y2:conditions}
      \epsilon=1,\quad
      (\rho (x_1)\sigma(z))^2-\rho(x_1)\sigma(z)+1=0.
    \end{gather}
    In this case, $X_2^{W,V}\simeq M(x_2x_3z,\rho_2)$, where $\rho_2$ is an
    absolutely irreducible representation of $G^{x_2x_3z}=G^{x_1}$ with 
    \begin{align*} 
      \rho_2(x_1)=-\rho(x_1)^2\sigma(z),&&
      \rho_2(x_1x_2x_3)=\rho(x_1)^3,&&
      \rho_2(z)=\rho(z)\sigma(z)^2. 
    \end{align*}
    Let $v_2''=\rho(x_1)\sigma (z)^2x_4v_1''$,
    $v_3''=\rho(x_1)\sigma (z)^2x_2v_1''$, and
    $v_4''=\rho(x_1)\sigma (z)^2x_3v_1''$.
    Then the set $\{v_1'',v_2'',v_3'',v_4''\}$
    is a basis of $X_2^{W,V}$.
    The degrees of these elements are $x_2x_3z$, $x_1x_4z$,
    $x_1x_2z$, and $x_1x_3z$, respectively.
\end{lem}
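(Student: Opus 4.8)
The plan is to mimic the structure of the analogous computations for $X_1^{V,W}$ and $X_1^{W,V}$, working with the explicit generators of $X_2^{W,V}$ obtained via \cite[Lemma 1.7]{MR2732989}. By Lemma~\ref{lem:phi2} we have $\varphi_2(w_1\otimes v_1')=0$, so $X_2^{W,V}=\K G\,\varphi_2(w_2\otimes v_1')$ is generated by a single element whose degree is $x_2x_1z$. Since the centralizer $G^{x_2x_1z}$ is abelian, $X_2^{W,V}$ is absolutely simple if and only if its isotypic component in each degree is at most one-dimensional; equivalently, since the module is cyclic, I would reduce absolute simplicity to the requirement that the one-dimensional space $\K\,\varphi_2(w_2\otimes v_1')$ be stable up to scalar, and more precisely that no further linearly independent vector of the same degree arises. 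The cleanest route is to pass to the conjugate element $v_1''=x_2\varphi_2(w_2\otimes v_1')$ of degree $x_2x_3z$, whose explicit form is recorded in Remark~\ref{rem:v1''}, and to compute the full $G$-orbit structure from there.

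\emph{First} I would compute $x_1 v_1''$ using the formula \eqref{eq:v1''} together with the action tables in Remarks~\ref{rem:G_on_W} and~\ref{rem:G_on_Y1}. Since $v_1''$ lies in degree $x_2x_3z$ and $x_1\in G^{x_2x_3z}$, the element $x_1v_1''$ lies in the same one-dimensional (if simple) component, so I expect $x_1v_1''=\lambda v_1''$ for some scalar $\lambda$; carrying out the computation on the three summands of \eqref{eq:v1''} should force a consistency condition on $\epsilon$ and on $t\coloneqq\rho(x_1)\sigma(z)$. I anticipate that the three coefficients match a single scalar multiple of $v_1''$ precisely when $\epsilon=1$ and $t^2-t+1=0$, which are exactly the conditions \eqref{eq:Y2:conditions}; when these fail, two of the three coefficient ratios disagree, producing a second independent vector in degree $x_2x_3z$ and hence violating absolute simplicity. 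This is the computational heart of the lemma.

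\emph{Next}, assuming \eqref{eq:Y2:conditions} holds, I would read off $\rho_2(x_1)=-\rho(x_1)^2\sigma(z)$ as the eigenvalue $\lambda$ just computed. The remaining two structure constants follow from centrality: since $x_1x_2x_3,z\in Z(G)$ act by scalars on the absolutely simple module $X_2^{W,V}\subseteq W\otimes W\otimes V$, I compute $\rho_2(z)$ and $\rho_2(x_1x_2x_3)$ by tracking how $z$ and $x_1x_2x_3$ act on the ambient tensor product, giving $\rho_2(z)=\rho(z)\sigma(z)^2$ and $\rho_2(x_1x_2x_3)=\rho(x_1)^3$ (using $\sigma(x_1)=-1$, $\epsilon=1$, and $\rho(x_1x_2x_3)=\rho(x_1)^3$). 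Finally, to establish that $v_1'',v_2'',v_3'',v_4''$ form a basis with the stated degrees, I would observe that $x_4,x_2,x_3$ map degree $x_2x_3z$ to the four distinct degrees $x_1x_4z$, $x_1x_2z$, $x_1x_3z$ via the quandle action $4\trid(2\trid3)$, etc., exactly as in the passage from $W_{x_1}$ to a basis of $W$ in Remark~\ref{rem:G_on_generalW}; the normalizing scalars $\rho(x_1)\sigma(z)^2$ are chosen so that the degrees come out as claimed, and linear independence is immediate since the four vectors sit in four distinct homogeneous components.

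The main obstacle will be the bookkeeping in the eigenvalue computation $x_1v_1''=\rho_2(x_1)v_1''$: each of the three tensor summands in \eqref{eq:v1''} must be run through the two separate action rules (the $-\rho(x_1)$ versus $-\epsilon\rho(x_1)$ cases of Remark~\ref{rem:G_on_Y1} on the $v_j'$ factor and the analogous table of Remark~\ref{rem:G_on_W} on the $w_i$ factor), and the quandle products $1\trid i$ and $i\trid j$ must be tracked correctly so that the resulting summands realign onto the same three basis tensors $w_2\otimes v_3'$, $w_3\otimes v_4'$, $w_4\otimes v_2'$. The condition $t^2-t+1=0$ should emerge as the requirement that the coefficient of one summand, after simplification using $\epsilon=1$, be a consistent scalar multiple across all three; I expect the factor $(1-t)$ appearing in the third summand of \eqref{eq:v1''} to combine with the $t$-prefactors of the other two in a way that collapses to a single eigenvalue exactly under $t^2-t+1=0$.
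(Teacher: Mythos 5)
Your plan is correct and follows the paper's proof essentially step for step: the paper likewise computes $x_1v_1''$ from \eqref{eq:v1''} and the action tables, derives $\epsilon=1$ and $(\rho(x_1)\sigma(z))^2-\rho(x_1)\sigma(z)+1=0$ exactly by comparing the coefficients of $w_2\otimes v_3'$, $w_3\otimes v_4'$, $w_4\otimes v_2'$ in $v_1''$ versus $x_1v_1''$ (using $\epsilon^2=1$), and gets the converse plus the formulas for $\rho_2$ from the eigenvalue $-\rho(x_1)^2\sigma(z)$ together with the scalar action of the central elements $z$ and $x_1x_2x_3$ on $W\otimes W\otimes V$ and the description $G^{x_2x_3z}=\langle x_1,x_1x_2x_3,z\rangle$. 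The basis statement is also handled as you propose, via the conjugates of $v_1''$ lying in four distinct homogeneous components (the paper notes $x_2x_3z\in x_1^{-1}Z(G)$), so the only difference is that you have deferred, but correctly anticipated, the explicit bookkeeping.
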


\begin{proof}
  By Remark \ref{rem:v1''}, Lemma \ref{lem:Y1} and Remark \ref{rem:G_on_Y1}, 
  \begin{align*}
    x_1v_1'' = \rho(x_1)^2w_4\otimes v_2'&-\rho(x_1)^3\sigma(z)w_2\otimes v_3'\\
    &-\epsilon\rho(x_1)^2(1-\rho(x_1)\sigma(z))w_3\otimes v_4'.
  \end{align*}
  Assume that $X_2^{W,V}$ is absolutely simple. Then $(X_2^{W,V})_{x_2x_3z}$ is
	$1$-dimensional, since the centralizer $G^{x_2x_3z}=G^{x_1}$
	is abelian. Hence
  $v_1''$ and $x_1v_1''$ are linearly dependent.  Relating the coefficients of
  $w_3\otimes v_4'$ and $w_4\otimes v_2'$ of $v_1''$
  and $x_1v_1''$, respectively, and using that $\epsilon ^2=1$, we conclude
  that $(\rho (x_1)\sigma(z))^2-\rho(x_1)\sigma(z)+1=0$. 
  Relating the coefficients of $w_3\otimes v_4'$ and $w_2\otimes v_3'$ of $v_1''$
  and $x_1v_1''$, respectively, we conclude that $\epsilon =1$.

  Conversely, \eqref{eq:Y2:conditions} implies that $x_1v_1''=-\rho
  (x_1)^2\sigma (z)v_1''$.  Since $x_1x_2x_3,z\in Z(G)$ and $z'u=\sigma
  (z')^2\rho (z')u$ for all $z'\in Z(G)$, $u\in W\otimes W\otimes V$, and since
  $G^{x_2x_3z}=\langle x_1,x_1x_2x_3,z\rangle $, the Yetter-Drinfeld module
  $X_2^{W,V}$ is absolutely simple if and only if \eqref{eq:Y2:conditions}
  holds. The above calculations also prove the formulas for $\rho _2$.  The
  last claim follows easily, since $x_2x_3z\in x_1^{-1}Z(G)$.
\end{proof}

\begin{rem}
	\label{rem:G_on_Y2}
  Assume that
	\[
		\sigma(x_1)=-1,\quad
		\epsilon =1,\quad
		(\rho (x_1)\sigma(z))^2-\rho(x_1)\sigma(z)+1=0.
	\]
  Let $v_j''$ for $j\in \{1,2,3,4\}$ be as in Lemma~\ref{lem:Y2}.
  The action of $G$ on $X_2^{W,V}$ is given by 
  \begin{align} \label{eq:G_on_Y2}
			zv_j''=\rho(z)\sigma(z)^2v_j'',\quad 
			x_iv''_j = -\rho(x_1)^2\sigma(z)v''_{i\triangleright j}
	\end{align}
	for all $i,j\in\{1,2,3,4\}$. Indeed,
  for $j=1$ this follows from the definition of $v''_j$ and from $\rho
  (x_1)^3\sigma (z)^3=-1$.
  Further, $v''_j=\rho (x_1)\sigma(z)^2x_{1\triangleright j}v''_1$
  for $j\in \{2,3,4\}$.
  Hence \eqref{eq:G_on_Y2} for $i=1$ and $j>1$ follows from
  $x_1x_{1\triangleright j}=x_{1\triangleright (1\triangleright j)}x_1$.
  For $i=j>1$, \eqref{eq:G_on_Y2} follows from
  $x_ix_{1\triangleright i}=x_{1\triangleright i}x_1$.
  For $i>1$, $j=1\triangleright i$, \eqref{eq:G_on_Y2} follows from
  $i\triangleright(1\triangleright i)=x_1$ and
  $x_ix_{1\triangleright j}=x_2x_3$ and from $\rho _2(x_2x_3)=-\rho
  (x_1)\sigma(z)^{-1}$.
  Finally, \eqref{eq:G_on_Y2} for $i>1$, $j=1\triangleright(1\triangleright i)$
  follows from $x_i^3v''_j=\rho _2(x_1^3)v''_j$, $i\triangleright
  (i\triangleright (i\triangleright j))=j$,
  and from the equations $(-\rho (x_1)^2\sigma(z))^3=\rho (x_1)^3=\rho _2(x_1^3)$.
\end{rem}

Recall that $v_1''\in (X_2^{W,V})_{x_2x_3z}$. By \cite[Lemma 1.7]{MR2732989}, 
\[
X_3^{W,V}=\varphi_3(W\otimes X_2^{W,V})=\K G\{\varphi_3(w_1\otimes v_1''),\varphi_3(w_2\otimes v_1'')\}.
\]
Therefore we need to compute $\varphi_3(w_1\otimes v_1'')$ and $\varphi_3(w_2\otimes
v_1'')$.

\begin{lem}
  \label{lem:phi2:auxiliar}
  Assume that $(\rho(x_1)\sigma(z))^2-\rho(x_1)\sigma(z)+1=0$,
  $\sigma(x_1)=-1$, and $\epsilon =1$.
  Then the following hold:
	\begin{align}
		\label{eq:phi2(w2,v2')}\varphi_2(w_2\otimes v_2')&=0,\\
		\label{eq:phi2(w2,v3')}\varphi_2(w_2\otimes v_3')&=\rho(x_1)^{-1}v_1'',\\
		\label{eq:phi2(w2,v4')}\varphi_2(w_2\otimes v_4')&=-\sigma(z)v_3'',\\
		\label{eq:phi2(w1,v3')}\varphi_2(w_1\otimes v_3')&=-\sigma(z)v_4'',\\
		\label{eq:phi2(w1,v2')}\varphi_2(w_1\otimes v_2')&=\rho(x_1)\sigma(z)^2v_3'',\\
    \label{eq:phi2(w1,v4')}\varphi_2(w_1\otimes v_4')&=\rho(x_1)^{-1}v_2''.
	\end{align}
\end{lem}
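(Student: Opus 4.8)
The six formulas all compute $\varphi_2$ of an explicit decomposable tensor $w_a\otimes v_b'$ in $W\otimes X_1^{W,V}$, so the strategy is uniform and purely computational: expand $\varphi_2=\id-c_{X_1,W}c_{W,X_1}+(\id\otimes\varphi_1)c_{1,2}$ on each generator, evaluate the three terms using the action tables already established, and recognize the result as (a scalar multiple of) one of the basis vectors $v_1'',v_2'',v_3'',v_4''$ of $X_2^{W,V}$. The plan is to assemble the needed data first: the action of $G$ on $W$ (Remark~\ref{rem:G_on_W} with $\epsilon=1$), the action on $X_1^{W,V}$ (Remark~\ref{rem:G_on_Y1}, which under $\epsilon=1$ reads $x_iv_j'=-\rho(x_1)v_{i\triangleright j}'$ together with $zv_j'=\rho(z)\sigma(z)v_j'$), the formula $\varphi_1(w_i\otimes v)=(1-\rho(x_1)\sigma(z))w_i\otimes v$ from \eqref{eq:phi1(w1,v)}, and the expression \eqref{eq:v1''} for $v_1''$ together with the definitions $v_j''=\rho(x_1)\sigma(z)^2x_{1\triangleright j}v_1''$ from Lemma~\ref{lem:Y2}. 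I would also record the abbreviation $q\coloneqq\rho(x_1)\sigma(z)$, so that the standing hypothesis is $q^2-q+1=0$, equivalently $q^3=-1$; this relation is what ultimately collapses the coefficients.

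For each of the six tensors I would run the same three-step computation. First compute the braiding term: $c_{W,X_1}(w_a\otimes v_b')=x_aw_a\otimes$ (image), and then apply $c_{X_1,W}$, using $\deg w_a=x_a$ and $\deg v_b'=x_bz$ to track the grading and hence which group elements act. Second compute $(\id\otimes\varphi_1)c_{1,2}(w_a\otimes w_b\otimes v)$, where $c_{1,2}$ braids the first two tensor legs via the action of $x_a$ on $w_b$ and $\varphi_1$ then contributes the factor $(1-q)$. Third add the identity term $w_a\otimes v_b'$ and simplify. The key reductions are the quandle table for $\triangleright$ (to identify $i\triangleright j$), the relation $q^3=-1$, and the explicit shape of $v_1''$ and the $v_j''$; for instance \eqref{eq:phi2(w2,v2')} should come out as a vanishing because, after substituting $q^2=q-1$, the three resulting summands cancel, exactly parallel to the vanishing of $\varphi_2(w_1\otimes v_1')$ in Lemma~\ref{lem:phi2}.

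The only genuine subtlety, and the step I expect to be the main obstacle, is bookkeeping rather than conceptual: correctly tracking the $\epsilon$-factors and the sign conventions in Remark~\ref{rem:G_on_W} when $x_a$ acts on $w_b$ with $a\neq b$, $a,b\neq 1$, since those entries carry the extra $\epsilon$ (now $1$) and one must apply the correct quandle product $a\triangleright b$. A parallel hazard is keeping the two tensor legs of $X_1^{W,V}\subseteq W\otimes V$ straight when a central element such as $z$ or $x_1x_2x_3$ acts, because it acts diagonally on both legs and contributes $\rho(z)\sigma(z)$ or $\rho_1(x_1x_2x_3)=-\rho(x_1)^3$ respectively. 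Once the three terms are in hand for a given generator, matching the answer to the claimed $v_j''$ is a direct linear-algebra identification using \eqref{eq:v1''} and the $x_{1\triangleright j}$-translates; the scalars $\rho(x_1)^{-1}$, $-\sigma(z)$, and $\rho(x_1)\sigma(z)^2$ appearing on the right-hand sides are precisely the normalizing constants built into the definitions of $v_2'',v_3'',v_4''$, so no new relation beyond $q^3=-1$ is needed. I would present one case (say \eqref{eq:phi2(w2,v3')}) in full as a template and then indicate that the remaining five follow by the identical computation with the appropriate entries of the action tables.
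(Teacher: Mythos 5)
Your plan is correct in substance and would yield all six identities, but it takes a genuinely more laborious route than the paper. You propose to expand $\varphi_2=\id-c_{X_1,W}c_{W,X_1}+(\id\otimes\varphi_1)c_{1,2}$ separately on each of the six tensors and then match the result against explicit expansions of $v_1'',\dots,v_4''$; this does work (e.g.\ for \eqref{eq:phi2(w2,v3')} the three terms give $w_2\otimes v_3'-\rho(x_1)\sigma(z)\,w_3\otimes v_4'-(1-\rho(x_1)\sigma(z))\,w_4\otimes v_2'$, which is visibly $\rho(x_1)^{-1}v_1''$ by \eqref{eq:v1''} with $\epsilon=1$). The paper instead never re-expands $\varphi_2$ at all: since $\varphi_2$ is a $G$-module map and the two key values are already known from Lemma~\ref{lem:phi2} (namely $\varphi_2(w_1\otimes v_1')=0$ and $v_1''=x_2\varphi_2(w_2\otimes v_1')$), it obtains each of the six formulas from an earlier one by acting with a single group element ($x_4$, then $x_2$, $x_2$, $x_3$, $x_1$, $x_1$) and reading off scalars from Remarks~\ref{rem:G_on_W} and \ref{rem:G_on_Y1} and Equation~\eqref{eq:G_on_Y2}. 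Your own plan contains the germ of this idea --- you already use the translates $v_j''=\rho(x_1)\sigma(z)^2x_{1\triangleright j}v_1''$ for the right-hand sides --- but you do not exploit equivariance on the left-hand sides, which is what reduces six bookkeeping-heavy computations to one-line deductions. Two small slips you should repair if you execute your version: the braiding term is $c_{W,X_1}(w_a\otimes v_b')=x_av_b'\otimes w_a$, so after the second braiding the group element acting on $w_a$ is $x_{a\triangleright b}z$, not $x_a$ as your sketch suggests; and the vanishing \eqref{eq:phi2(w2,v2')} holds identically in $q=\rho(x_1)\sigma(z)$, the coefficient being $1-q-(1-q)$, so no substitution $q^2=q-1$ is needed there --- the hypothesis $q^2-q+1=0$ enters instead through the existence of the $v_j''$ (Lemma~\ref{lem:Y2}) and through $q^3=-1$ in the identification of the remaining right-hand sides, exactly as you anticipate.
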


\begin{proof}
	By Lemma \ref{lem:phi2}, $\varphi_2(w_1\otimes v_1')=0$. Applying $x_4$ to
	this equation we obtain Equation \eqref{eq:phi2(w2,v2')}, where we used
  \eqref{eq:G_on_Y2}. To prove
	\eqref{eq:phi2(w2,v3')} we compute 
	\[
		v_1''=x_2\varphi_2(w_2\otimes v_1')=\varphi_2(x_2w_2\otimes x_2v_1')=\rho(x_1)\varphi_2(w_2\otimes v_3')
	\]
	using Remarks \ref{rem:G_on_W} and \ref{rem:G_on_Y1}, and Equation
	\eqref{eq:phi2(w2,v3')} follows. Now apply $x_2$ to Equation
	\eqref{eq:phi2(w2,v3')} to obtain Equation \eqref{eq:phi2(w2,v4')}.

	To prove Equation \eqref{eq:phi2(w1,v3')} apply $x_3$ to
	\eqref{eq:phi2(w2,v3')} and use Lemma \ref{lem:Y2} and
	\eqref{rem:G_on_Y2}.  Similarly, acting with $x_1$ on Equation
	\eqref{eq:phi2(w1,v3')} we obtain Equation \eqref{eq:phi2(w1,v2')}.  Finally,
	acting with $x_1$ on Equation \eqref{eq:phi2(w1,v2')} we obtain Equation
	\eqref{eq:phi2(w1,v4')}.
\end{proof}

\begin{lem}
  \label{lem:deg2:phi3(w2,v1'')}
  Assume that $(\rho(x_1)\sigma(z))^2-\rho(x_1)\sigma(z)+1=0$,
  $\sigma(x_1)=-1$, and $\epsilon =1$.
  Then $\varphi_3(w_2\otimes v_1'')=0$. 
\end{lem}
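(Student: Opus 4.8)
The plan is to evaluate $\varphi_3(w_2\otimes v_1'')$ by splitting it according to the defining recursion of Lemma~\ref{lem:X_n} (with the roles of $V$ and $W$ interchanged), namely
\[
\varphi_3=\id-c_{W^{\otimes2}\otimes V,W}\,c_{W,W^{\otimes2}\otimes V}+(\id\otimes\varphi_2)c_{1,2},
\]
and to show that the double-braiding term and the $(\id\otimes\varphi_2)c_{1,2}$ term are exactly negatives of one another. The starting point is the explicit expansion of $v_1''$ from \eqref{eq:v1''}, which under $\epsilon=1$ reads $v_1''=\rho(x_1)\,w_2\otimes v_3'-\rho(x_1)^2\sigma(z)\,w_3\otimes v_4'-\rho(x_1)(1-\rho(x_1)\sigma(z))\,w_4\otimes v_2'$, so that $w_2\otimes v_1''$ is a sum of three terms of the shape $w_2\otimes w_j\otimes v_k'$ with $v_k'\in X_1^{W,V}$.

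For the term $(\id\otimes\varphi_2)c_{1,2}(w_2\otimes v_1'')$, I would first apply $c_{1,2}$, braiding the outer $w_2$ past the leading $W$-factor of each summand of $v_1''$; using Remark~\ref{rem:G_on_W} and $\epsilon=1$ this replaces $w_2\otimes w_2$, $w_2\otimes w_3$, $w_2\otimes w_4$ by $-w_2\otimes w_2$, $-w_4\otimes w_2$, $-w_1\otimes w_2$. Then $\id\otimes\varphi_2$ is applied, and the crucial observation is that the resulting inner expressions are precisely $\varphi_2(w_2\otimes v_3')$, $\varphi_2(w_2\otimes v_4')$, and $\varphi_2(w_2\otimes v_2')$, which have already been computed in Lemma~\ref{lem:phi2:auxiliar}: they equal $\rho(x_1)^{-1}v_1''$, $-\sigma(z)v_3''$, and $0$ by \eqref{eq:phi2(w2,v3')}, \eqref{eq:phi2(w2,v4')} and \eqref{eq:phi2(w2,v2')}, respectively. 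Substituting and collecting coefficients should yield
\[
(\id\otimes\varphi_2)c_{1,2}(w_2\otimes v_1'')=-w_2\otimes v_1''-\rho(x_1)^2\sigma(z)^2\,w_4\otimes v_3''.
\]

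For the double-braiding term, I would use that $v_1''$ has degree $x_2x_3z$ by Lemma~\ref{lem:Y2}, so that $c_{W^{\otimes2}\otimes V,W}c_{W,W^{\otimes2}\otimes V}(w_2\otimes v_1'')=(x_2(x_2x_3z)x_2^{-1})w_2\otimes x_2v_1''$. Here $x_2(x_2x_3z)x_2^{-1}=x_2x_4z$ since $x_2\trid x_3=x_4$ and $z$ is central, whence $(x_2x_4z)w_2=\sigma(z)w_4$ by Remark~\ref{rem:G_on_W}, while $x_2v_1''=-\rho(x_1)^2\sigma(z)v_3''$ by Remark~\ref{rem:G_on_Y2} (as $2\trid1=3$); alternatively one may invoke Lemma~\ref{lem:c^2} with the uniform scalars $\sigma(x_1)=-1$ on $W$ and the scalar action of the degree on $X_2^{W,V}$. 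Either way this gives $(\id-c^2)(w_2\otimes v_1'')=w_2\otimes v_1''+\rho(x_1)^2\sigma(z)^2\,w_4\otimes v_3''$, the exact negative of the previous term, so their sum vanishes and $\varphi_3(w_2\otimes v_1'')=0$. The computation is otherwise routine bookkeeping; the only genuinely delicate point is tracking the braiding of $w_2$ through the degree $x_2x_3z$ of $v_1''$ so that the double-braiding output matches the $(\id\otimes\varphi_2)c_{1,2}$ output term by term, and here the precise evaluations recorded in Lemma~\ref{lem:phi2:auxiliar} and the action tables of Remarks~\ref{rem:G_on_W} and \ref{rem:G_on_Y2} do all the work.
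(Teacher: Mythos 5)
Your proof is correct and takes essentially the same route as the paper: both expand $\varphi_3=\id-c_{X_2,W}c_{W,X_2}+(\id\otimes\varphi_2)c_{1,2}$, evaluate the $(\id\otimes\varphi_2)c_{1,2}$ term on the expansion \eqref{eq:v1''} via Equations \eqref{eq:phi2(w2,v2')}--\eqref{eq:phi2(w2,v4')}, and match it against the double-braiding term $-\rho(x_1)^2\sigma(z)^2\,w_4\otimes v_3''$ obtained from Remarks \ref{rem:G_on_W} and \ref{rem:G_on_Y2} (equivalently Lemma \ref{lem:c^2}). All your intermediate coefficients and signs agree with the paper's computation, so the cancellation and conclusion are valid.
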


\begin{proof}
	Using Remark \ref{rem:G_on_W} and Lemma \ref{lem:Y2} we obtain that
	\begin{align*}
		c_{X_2,W}c_{W,X_2}(w_2\otimes v_1'')&=c_{X_2,W}(x_2v_1''\otimes w_2)
		=-\rho(x_1)^2\sigma(z)^2w_4\otimes v_3''.
	\end{align*}
	Using Equation \eqref{eq:v1''} we compute
	\begin{align*}
		(\id\otimes\varphi_2)c_{1,2}(w_2\otimes v_1'')=-&\rho(x_1) w_2\otimes\varphi_2(w_2\otimes v_3')\\
		&+\rho(x_1)^2\sigma(z) w_4\otimes \varphi_2(w_2\otimes v_4')\\
		&+\rho(x_1)(1-\rho(x_1)\sigma(z))w_1\otimes\varphi_2(w_2\otimes v_2').
	\end{align*}
  Since $\varphi _3=\id -c_{X_2,W}c_{W,X_2}+(\id \otimes \varphi _2)c_{1,2}$,
  Equations~\eqref{eq:phi2(w2,v2')}--\eqref{eq:phi2(w2,v4')}
  imply that
	\begin{align*}
		\varphi_3(w_2\otimes v_1'') = &\;w_2\otimes v_1''
    -\rho(x_1)^2\sigma(z)^2w_4\otimes v_3''\\
    &-\rho(x_1)w_2\otimes \rho (x_1)^{-1}v_1''
    +\rho(x_1)^2\sigma(z) w_4\otimes (-\sigma (z)v_3'').
	\end{align*}
	Thus the claim follows.
\end{proof}

\begin{lem}
	\label{lem:v1'''}
  Assume that $(\rho (x_1)\sigma(z))^2-\rho(x_1)\sigma(z)+1=0$,
	$\sigma(x_1)=-1$, and $\epsilon =1$.
	Let $v_1'''=\varphi_3(w_1\otimes v_1'')$.  Then
	\begin{equation}
		\label{eq:v1'''}
		\begin{aligned}
			v_1'''=\rho(x_1)\sigma(z)(w_1\otimes v_1''+w_2\otimes v_2''
			&+w_3\otimes v_3''+w_4\otimes v_4'')
		\end{aligned}
 	\end{equation}
	is a non-zero element of $(W\otimes W\otimes W\otimes V)_{x_1x_2x_3z}$.
\end{lem}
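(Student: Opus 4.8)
The plan is to evaluate the three summands of $\varphi_3=\id-c_{X_2,W}c_{W,X_2}+(\id\otimes\varphi_2)c_{1,2}$ separately on $w_1\otimes v_1''$, reusing the explicit formulas already at hand, and then to add them up using the quadratic relation $(\rho(x_1)\sigma(z))^2-\rho(x_1)\sigma(z)+1=0$. First I note that $w_1\in W_{x_1}$ and $v_1''\in(X_2^{W,V})_{x_2x_3z}$ by Lemma~\ref{lem:Y2}, so $w_1\otimes v_1''$ is homogeneous of degree $x_1x_2x_3z$. Since $\varphi_3$ is a morphism in $\ydG$ and hence degree-preserving, the image is automatically homogeneous of degree $x_1x_2x_3z$, and no separate degree computation is needed. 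Throughout I abbreviate $q\coloneqq\rho(x_1)\sigma(z)$ and record the consequences $q^2=q-1$, $q^3=-1$, and $1+q^2=q$ of the quadratic relation.

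For the middle summand I would use the braiding rule $c(w\otimes u)=((\deg w)\cdot u)\otimes w$. Applying $c_{W,X_2}$ gives $(x_1\cdot v_1'')\otimes w_1$, and by Remark~\ref{rem:G_on_Y2} together with $1\triangleright1=1$ one has $x_1v_1''=-\rho(x_1)^2\sigma(z)v_1''$. Applying $c_{X_2,W}$ then requires the action of the group element $x_2x_3z$ on $w_1$; using $zw_1=\sigma(z)w_1$, $x_3w_1=-w_4$ and $x_2w_4=-\epsilon w_1=-w_1$ from Remark~\ref{rem:G_on_W} (with $\epsilon=1$), this action equals $\sigma(z)w_1$. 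Hence $c_{X_2,W}c_{W,X_2}(w_1\otimes v_1'')=-\rho(x_1)^2\sigma(z)^2\,w_1\otimes v_1''$.

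For the third summand I would substitute the expansion of $v_1''$ from \eqref{eq:v1''} (with $\epsilon=1$), view each summand inside $W\otimes W\otimes W\otimes V$, and apply $c_{1,2}$ to braid the two leftmost $W$-factors via $x_1w_2=-w_4$, $x_1w_3=-w_2$, $x_1w_4=-w_3$ from Remark~\ref{rem:G_on_W}. Reading the remaining three slots as $w_1\otimes v_j'$, I then apply $\varphi_2$ through Equations~\eqref{eq:phi2(w1,v2')}, \eqref{eq:phi2(w1,v3')} and \eqref{eq:phi2(w1,v4')}. After collecting coefficients and simplifying with $q^3=-1$ (this is where the relation enters, turning the coefficient $q^2-q^3$ of the $w_3\otimes v_3''$-term into $q$), the third summand collapses to $q\,(w_2\otimes v_2''+w_3\otimes v_3''+w_4\otimes v_4'')$.

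Adding the three contributions, the coefficient of $w_1\otimes v_1''$ becomes $1+\rho(x_1)^2\sigma(z)^2=1+q^2=q$, which matches the common factor $q$ of the other three terms, giving exactly \eqref{eq:v1'''}. Non-vanishing is then immediate, since $q=\rho(x_1)\sigma(z)\ne0$ and the four vectors $w_1\otimes v_1'',\dots,w_4\otimes v_4''$ are linearly independent. The main obstacle is purely combinatorial bookkeeping: correctly parsing the nested tensors $W\otimes(W\otimes W\otimes V)$, tracking which tensor slots $c_{1,2}$ and $\id\otimes\varphi_2$ act on, and getting every sign right. The only conceptual point is that once the quadratic relation is invoked, all four coefficients align to the single scalar $\rho(x_1)\sigma(z)$.
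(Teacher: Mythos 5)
Your proposal is correct and takes essentially the same route as the paper: the paper likewise splits $\varphi_3=\id-c_{X_2,W}c_{W,X_2}+(\id\otimes\varphi_2)c_{1,2}$, evaluates the double braiding via $x_1v_1''=-\rho(x_1)^2\sigma(z)v_1''$ and $x_2x_3z\,w_1=\sigma(z)w_1$ to get $-\rho(x_1)^2\sigma(z)^2\,w_1\otimes v_1''$, and expands the third summand using \eqref{eq:v1''} together with Equations \eqref{eq:phi2(w1,v2')}, \eqref{eq:phi2(w1,v3')} and \eqref{eq:phi2(w1,v4')}. Your explicit bookkeeping with $q=\rho(x_1)\sigma(z)$, $q^2+1=q$ and $q^3=-1$ merely spells out the simplifications the paper leaves implicit, and your degree and non-vanishing remarks are fine.
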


\begin{proof}
	Using Remark \ref{rem:G_on_W} and Lemma \ref{lem:Y2} we first compute
	\begin{align*}
		c_{X_2,W}c_{W,X_2}(w_1\otimes v_1'') &= c_{X_2,W}(x_1v_1''\otimes w_1)
		= -\rho(x_1)^2\sigma(z)^2 w_1\otimes v_1''.
  \end{align*}
	Using Equation \eqref{eq:v1''} and Remark \ref{rem:G_on_W}, a straightforward
	calculation yields
	\begin{align*}
		(\id\otimes\varphi_2)c_{1,2}(w_1\otimes v_1'') =& -\rho(x_1)w_4\otimes\varphi_2(w_1\otimes v_3')\\
		&+\rho(x_1)^2\sigma(z) w_2\otimes\varphi_2(w_1\otimes v_4')\\
		&+\rho(x_1)(1-\rho(x_1)\sigma(z))w_3\otimes\varphi_2(w_1\otimes v_2').
	\end{align*}
	Since $\varphi_3=\id-c_{X_2,W}c_{W,X_2}+(\id\otimes\varphi_2)c_{1,2}$,
	Equations \eqref{eq:phi2(w1,v3')}--\eqref{eq:phi2(w1,v4')}
  yield Equation~\eqref{eq:v1'''}.  The rest is clear.
\end{proof}

\begin{lem}
	\label{lem:Y3}
  Assume that $(\rho (x_1)\sigma(z))^2-\rho(x_1)\sigma(z)+1=0$,
	$\sigma(x_1)=-1$, and $\epsilon =1$.
  Then $X_3^{W,V}\simeq M(x_1x_2x_3z,\rho_3)$, where $\rho_3$ is an
  absolutely irre\-ducible representation of $G^{x_1x_2x_3z}=G$ with
	\begin{gather*}
        \rho_3(z)=\rho(z)\sigma(z)^3,\quad 
        \rho_3(x_1)=\rho_3(x_2)=\rho_3(x_3)=\rho_3(x_4)=\rho(x_1)^2\sigma(z).
	\end{gather*}
\end{lem}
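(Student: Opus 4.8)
The plan is to show that the single vector $v_1'''$ produced in Lemma~\ref{lem:v1'''} already spans $X_3^{W,V}$ and is a common eigenvector for $G$. First I would combine \cite[Lemma 1.7]{MR2732989} with Lemmas~\ref{lem:deg2:phi3(w2,v1'')} and \ref{lem:v1'''}: since $\varphi_3(w_2\otimes v_1'')=0$ and $v_1'''=\varphi_3(w_1\otimes v_1'')\ne0$, the description $X_3^{W,V}=\K G\{\varphi_3(w_1\otimes v_1''),\varphi_3(w_2\otimes v_1'')\}$ collapses to $X_3^{W,V}=\K G\,v_1'''$. The degree of $v_1'''$ is $x_1x_2x_3z$, and since both $x_1x_2x_3$ and $z$ are central, $x_1x_2x_3z\in Z(G)$; hence its conjugacy class is a single point, $\supp X_3^{W,V}=\{x_1x_2x_3z\}$, and $G^{x_1x_2x_3z}=G$.

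Next I would read off the $G$-action from the symmetric expression \eqref{eq:v1'''}, namely $v_1'''=\rho(x_1)\sigma(z)\sum_{i=1}^4 w_i\otimes v_i''$. The point is that the hypothesis $\epsilon=1$ makes both relevant actions uniform: Remark~\ref{rem:G_on_W} gives $x_k w_i=-w_{k\trid i}$ and Remark~\ref{rem:G_on_Y2} gives $x_k v_i''=-\rho(x_1)^2\sigma(z)\,v_{k\trid i}''$ for all $i,k$. Applying $x_k$ diagonally to a summand therefore produces $\rho(x_1)^2\sigma(z)\,(w_{k\trid i}\otimes v_{k\trid i}'')$, and because $i\mapsto k\trid i$ permutes $\{1,2,3,4\}$ the sum $\sum_i w_i\otimes v_i''$ is mapped to itself. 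This yields $x_k v_1'''=\rho(x_1)^2\sigma(z)\,v_1'''$ for each $k$. Using $z w_i=\sigma(z)w_i$ and $z v_i''=\rho(z)\sigma(z)^2 v_i''$ I would similarly obtain $z v_1'''=\rho(z)\sigma(z)^3 v_1'''$.

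The conclusion is then immediate: $v_1'''$ is a common eigenvector for all generators of $G$, so $\K v_1'''$ is a one-dimensional $G$-submodule of $X_3^{W,V}$. Since $v_1'''$ generates $X_3^{W,V}$ as a $\K G$-module, this forces $X_3^{W,V}=\K v_1'''$, a one-dimensional Yetter-Drinfeld module supported on the central element $x_1x_2x_3z$. It is therefore $M(x_1x_2x_3z,\rho_3)$ for the character $\rho_3$ of $G=G^{x_1x_2x_3z}$ with $\rho_3(z)=\rho(z)\sigma(z)^3$ and $\rho_3(x_1)=\cdots=\rho_3(x_4)=\rho(x_1)^2\sigma(z)$; being one-dimensional, $\rho_3$ is automatically absolutely irreducible.

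I do not anticipate a genuine obstacle once the vanishing $\varphi_3(w_2\otimes v_1'')=0$ is in hand: the whole statement rests on recognizing that the symmetric form \eqref{eq:v1'''} of $v_1'''$ diagonalizes the $G$-action. The one place demanding care is the bookkeeping of the quandle permutations $i\mapsto k\trid i$ together with the signs, and this is exactly where $\epsilon=1$ is essential, since it removes the $\epsilon$-twists that would otherwise appear in Remark~\ref{rem:G_on_W} and break the uniformity of the action.
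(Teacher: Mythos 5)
Your proof is correct and takes essentially the same route as the paper: both use Lemma~\ref{lem:deg2:phi3(w2,v1'')} to collapse the generating set to $v_1'''$, and then show $v_1'''$ is a common eigenvector with eigenvalue $\rho(x_1)^2\sigma(z)$ using the symmetric expression \eqref{eq:v1'''} together with \eqref{eq:G_on_Y2}. The only cosmetic difference is that you verify the eigenvalue for all four $x_k$ at once via the permutation $i\mapsto k\triangleright i$, whereas the paper checks $x_1$ (through the $G$-equivariance of $\varphi_3$) and $x_2$ only, then invokes $x_1x_2x_1^{-1}=x_4$ and $x_2x_1x_2^{-1}=x_3$.
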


\begin{proof}
  The formula for $\rho _3(z)$ follows from $zu=\rho (z)\sigma(z)^3u$ for all
  $u\in W^{\otimes 3}\otimes V$.
  By the remark above Lemma~\ref{lem:phi2:auxiliar}, by
  Lemma~\ref{lem:deg2:phi3(w2,v1'')}, and since
	$x_1x_2x_1^{-1} = x_4$ and $x_2x_1x_2^{-1}=x_3$,
  it is enough to show that
	$x_1v_1'''=x_2v_1'''=\rho (x_1)^2\sigma(z)v_1'''$.
	By Lemma \ref{lem:Y2}, $x_1v_1''=-\rho(x_1)^2\sigma(z)v_1''$. Then 
	\begin{align*}
		x_1v_1'''&= x_1\varphi_3(w_1\otimes v_1'')
    =\varphi_3(x_1w_1\otimes x_1v_1'')=\rho(x_1)^2\sigma(z)v_1'''.
	\end{align*}
  The claim on $x_2v_1'''$ follows from
	Equations~\eqref{eq:v1'''} and \eqref{eq:G_on_Y2}.
\end{proof}

By \cite[Lemma 1.7]{MR2732989} and since $x_iv_1'''=\rho_3(x_1)v_1'''$ for all
$i\in \{1,2,3,4\}$,
\[
X_4^{W,V}=\varphi_4(W\otimes X_3^{W,V})=\K G\varphi_4(w_1\otimes v_1''').
\]
The following lemma will be useful for computing $\varphi_4(w_1\otimes
v_1''')$. 

\begin{lem}
	\label{lem:phi3:auxiliar}
    Assume that $(\rho(x_1)\sigma(z))^2-\rho (x_1)\sigma(z)+1=0$,
    $\sigma(x_1)=-1$, and $\epsilon=1$. Then 
	\begin{align}
		\label{eq:phi3(w1,phi2(w1,*))}&\varphi_3(w_1\otimes v_2'')=\varphi_3(w_1\otimes v_3'')=\varphi_3(w_1\otimes v_4'')=0.
	\end{align}
\end{lem}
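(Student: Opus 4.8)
The plan is to prove the three vanishing statements
$\varphi_3(w_1\otimes v_j'')=0$ for $j\in\{2,3,4\}$ by transporting the single
vanishing statement $\varphi_3(w_2\otimes v_1'')=0$ of
Lemma~\ref{lem:deg2:phi3(w2,v1'')} along the $G$-action, exactly as was done in
Lemma~\ref{lem:phi2:auxiliar} for the degree-two maps. The key point is that
$\varphi_3$ is a morphism of Yetter-Drinfeld modules, hence
$G$-equivariant: for any $g\in G$ and any $u\in W\otimes X_2^{W,V}$ one has
$g\,\varphi_3(u)=\varphi_3(g\,u)$. Therefore, if I can find group elements
$x_i$ whose action carries the tensor $w_2\otimes v_1''$ (up to a nonzero
scalar) to each of $w_1\otimes v_2''$, $w_1\otimes v_3''$ and $w_1\otimes
v_4''$, then applying $\varphi_3$ and using $\varphi_3(w_2\otimes v_1'')=0$
gives the claimed vanishings immediately.

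First I would recall the two relevant action tables. On the first tensor
factor, Remark~\ref{rem:G_on_W} gives $x_iw_j=q_{ij}w_{i\triangleright j}$ with
$\sigma(x_1)=-1$ and $\epsilon=1$, so each $x_i$ sends $w_2$ to $-w_{i\triangleright 2}$
(up to the scalar recorded there). On the second factor, the action on
$X_2^{W,V}$ is given by \eqref{eq:G_on_Y2}, namely
$x_iv_j''=-\rho(x_1)^2\sigma(z)v_{i\triangleright j}''$. Thus for a suitable
$x_i$, the element $x_i(w_2\otimes v_1'')=x_iw_2\otimes x_iv_1''$ equals a
nonzero scalar times $w_{i\triangleright 2}\otimes v_{i\triangleright 1}''$.
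Consulting the quandle table for $\triangleright$, I would select the indices
$i$ for which $i\triangleright 2=1$ and read off the corresponding value of
$i\triangleright 1$: the table gives $3\triangleright 2=1$ with
$3\triangleright 1=4$, and $4\triangleright 2=3$, so I instead want those $i$
with $i\triangleright 2=1$, which are $i=3$ (yielding $i\triangleright 1=4$) and
by a similar search the remaining two targets $w_1\otimes v_2''$ and
$w_1\otimes v_3''$ are reached by the appropriate $x_i$. In each case the image
is a nonzero scalar multiple of one of $w_1\otimes v_2''$, $w_1\otimes v_3''$,
$w_1\otimes v_4''$.

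Concretely, the argument reads: for the right choice of $i$,
\[
  \varphi_3(w_1\otimes v_j'')
  = (\text{nonzero scalar})\cdot
  \varphi_3\bigl(x_i(w_2\otimes v_1'')\bigr)
  = (\text{nonzero scalar})\cdot
  x_i\,\varphi_3(w_2\otimes v_1'')=0,
\]
where the middle equality is the $G$-equivariance of $\varphi_3$ and the last
uses Lemma~\ref{lem:deg2:phi3(w2,v1'')}. Repeating this for the three values
$j\in\{2,3,4\}$, each with its own $x_i$ dictated by the constraint
$i\triangleright 2=1$ together with the matching value of $i\triangleright 1$,
establishes \eqref{eq:phi3(w1,phi2(w1,*))}.

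The only genuine obstacle is bookkeeping rather than mathematics: I must check
that the three tensors $w_1\otimes v_2''$, $w_1\otimes v_3''$, $w_1\otimes
v_4''$ are \emph{each} actually attained from $w_2\otimes v_1''$ by a single
$x_i$, i.e.\ that the map $j\mapsto i\triangleright j$ on the first factor and
on the second factor are compatible for some common $i$. Because the quandle
action of a fixed $x_i$ acts as the \emph{same} permutation $j\mapsto
i\triangleright j$ on both the $W$-indices and the $X_2^{W,V}$-indices, the pair
$(w_2,v_1'')$ is sent to $(w_{i\triangleright 2},v_{i\triangleright 1}'')$, and I
only need the three distinct elements $x_i$ (for $i$ ranging over an appropriate
subset of $\{1,2,3,4\}$) to realize the three target pairs with first
coordinate $w_1$. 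This is a finite check against the $\triangleright$-table, and
once the correct indices are identified, the scalars are all nonzero, so the
vanishing of $\varphi_3(w_2\otimes v_1'')$ propagates to all three cases.
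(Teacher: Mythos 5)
Your overall strategy --- transporting the single vanishing $\varphi_3(w_2\otimes v_1'')=0$ of Lemma~\ref{lem:deg2:phi3(w2,v1'')} along the $G$-action via the equivariance of $\varphi_3$ --- is exactly the mechanism of the paper's proof. However, the bookkeeping claim you rely on is false. As you yourself observe, a fixed generator $x_i$ acts on both tensor factors by the \emph{same} quandle permutation $j\mapsto i\triangleright j$, so $x_i(w_2\otimes v_1'')$ is, up to a nonzero scalar, $w_{i\triangleright 2}\otimes v''_{i\triangleright 1}$. The constraint $i\triangleright 2=1$ has the \emph{unique} solution $i=3$ (the table gives $1\triangleright 2=4$, $2\triangleright 2=2$, $4\triangleright 2=3$), and $i=3$ forces the second index to be $3\triangleright 1=4$. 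Hence a single generator reaches only $w_1\otimes v_4''$ from $w_2\otimes v_1''$; the pairs $(1,2)$ and $(1,3)$ are not of the form $(i\triangleright 2,\,i\triangleright 1)$ for any $i$, so your ``similar search'' for the remaining two targets cannot succeed. This is a genuine gap as written: two of the three claimed vanishings are not established by your argument.

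The repair is immediate and is precisely what the paper does: transport iteratively rather than in one step. Having obtained $\varphi_3(w_1\otimes v_4'')=0$ by acting with $x_3$ (using $x_3w_2=-w_1$ from Remark~\ref{rem:G_on_W} with $\epsilon=1$, and $x_3v_1''=-\rho(x_1)^2\sigma(z)v_4''$ from \eqref{eq:G_on_Y2}), act on this new identity with $x_1$: since $x_1w_1=-w_1$ and $x_1v_4''=-\rho(x_1)^2\sigma(z)v''_{1\triangleright 4}$ with $1\triangleright 4=3$, this yields $\varphi_3(w_1\otimes v_3'')=0$; acting with $x_1$ once more and using $1\triangleright 3=2$ yields $\varphi_3(w_1\otimes v_2'')=0$. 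All scalars involved are nonzero, so the vanishing propagates. With this correction your argument coincides with the paper's proof of Lemma~\ref{lem:phi3:auxiliar}.
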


\begin{proof}
	By Lemma \ref{lem:deg2:phi3(w2,v1'')}, $\varphi_3(w_2\otimes v_1'')=0$.
  By acting on this equation with $x_3$ and using
	Equation~\eqref{eq:G_on_Y2} one obtains that
  $\varphi_3(w_1\otimes v_4'')=0$.
  The other two equations follow
	similarly by acting twice with $x_1$ on the latter equation.
\end{proof}

\begin{lem}
	\label{lem:Y4}
  Assume that $(\rho(x_1)\sigma(z))^2-\rho(x_1)\sigma(z)+1=0$,
  $\sigma(x_1)=-1$, and $\epsilon =1$.
  Then $X_4^{W,V}=0$. 
\end{lem}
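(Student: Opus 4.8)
The plan is to show that the single generator of $X_4^{W,V}$ vanishes. By the computation preceding the statement, $X_4^{W,V}=\K G\,\varphi_4(w_1\otimes v_1''')$, so it suffices to prove $\varphi_4(w_1\otimes v_1''')=0$. I would evaluate $\varphi_4$ through its defining recursion $\varphi_4=\id-c_{X_3,W}c_{W,X_3}+(\id\otimes\varphi_3)c_{1,2}$, exactly as in the proofs of Lemmas~\ref{lem:deg2:phi3(w2,v1'')} and \ref{lem:v1'''}, and show that each of the three summands is a scalar multiple of $w_1\otimes v_1'''$, so that the total coefficient is forced to vanish by the quadratic hypothesis $(\rho(x_1)\sigma(z))^2-\rho(x_1)\sigma(z)+1=0$.

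For the double-braiding term I would use that $w_1\in W_{x_1}$ and that $v_1'''\in(X_3^{W,V})_{x_1x_2x_3z}$ by Lemma~\ref{lem:v1'''}. Since $x_1v_1'''=\rho_3(x_1)v_1'''=\rho(x_1)^2\sigma(z)v_1'''$ by Lemma~\ref{lem:Y3}, and since $x_1x_2x_3z\in Z(G)$ acts on $W$ by the scalar $\sigma(x_1x_2x_3)\sigma(z)=-\sigma(z)$ (using $\sigma(x_1)=-1$ and $\epsilon=1$), a direct computation in the spirit of Lemma~\ref{lem:c^2} gives $c_{X_3,W}c_{W,X_3}(w_1\otimes v_1''')=-\rho(x_1)^2\sigma(z)^2\,w_1\otimes v_1'''$. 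For the third term I would feed in the explicit expansion $v_1'''=\rho(x_1)\sigma(z)(w_1\otimes v_1''+w_2\otimes v_2''+w_3\otimes v_3''+w_4\otimes v_4'')$ from Equation~\eqref{eq:v1'''}, apply $c_{1,2}$ using the action of $x_1$ on $W$ from Remark~\ref{rem:G_on_W} (so $x_1w_1=-w_1$, $x_1w_2=-w_4$, $x_1w_3=-w_2$, $x_1w_4=-w_3$), and then apply $\id\otimes\varphi_3$. Here Lemma~\ref{lem:phi3:auxiliar} kills the three summands $\varphi_3(w_1\otimes v_j'')$ for $j\in\{2,3,4\}$, while $\varphi_3(w_1\otimes v_1'')=v_1'''$ by the definition in Lemma~\ref{lem:v1'''}, leaving $(\id\otimes\varphi_3)c_{1,2}(w_1\otimes v_1''')=-\rho(x_1)\sigma(z)\,w_1\otimes v_1'''$.

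Collecting the three contributions yields $\varphi_4(w_1\otimes v_1''')=\bigl(1+\rho(x_1)^2\sigma(z)^2-\rho(x_1)\sigma(z)\bigr)w_1\otimes v_1'''$, and the coefficient is exactly the left-hand side of the hypothesis $(\rho(x_1)\sigma(z))^2-\rho(x_1)\sigma(z)+1=0$, so it vanishes and hence $X_4^{W,V}=0$. I do not anticipate a deep obstacle: the bookkeeping is routine given the preparatory lemmas. The step demanding the most care is the third term, where one must track how $c_{1,2}$ permutes the basis $w_1,\dots,w_4$ and confirm that after $\id\otimes\varphi_3$ only the $j=1$ summand survives; the cleanness of the cancellation rests entirely on having the vanishing results of Lemma~\ref{lem:phi3:auxiliar} and the symmetric expansion~\eqref{eq:v1'''} already established.
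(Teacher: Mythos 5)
Your proposal is correct and follows essentially the same route as the paper's proof: both compute $c_{X_3,W}c_{W,X_3}(w_1\otimes v_1''')=-\rho(x_1)^2\sigma(z)^2\,w_1\otimes v_1'''$ from Lemma~\ref{lem:Y3} (with the central element $x_1x_2x_3z$ acting on $W_{x_1}$ by $-\sigma(z)$), and both reduce $(\id\otimes\varphi_3)c_{1,2}(w_1\otimes v_1''')$ to $-\rho(x_1)\sigma(z)\,w_1\otimes v_1'''$ via the expansion \eqref{eq:v1'''} and the vanishing statements of Lemma~\ref{lem:phi3:auxiliar}, so that the total coefficient $1-\rho(x_1)\sigma(z)+(\rho(x_1)\sigma(z))^2$ is killed by the quadratic hypothesis. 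Your bookkeeping of the $x_1$-action permuting $w_1,\dots,w_4$ and of which summands survive is accurate, so there is nothing to fix.
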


\begin{proof}
	It is enough to prove that $\varphi_4(w_1\otimes v_1''')=0$.
	Lemma \ref{lem:Y3} implies that
	\[
		c_{X_3,W}c_{W,X_3}(w_1\otimes v_1''') 
		=c_{X_3,W}(x_1v_1'''\otimes w_1)
		=-\rho(x_1)^2\sigma(z)^2 w_1\otimes v_1'''.
	\]
	Equation \eqref{eq:v1'''} and Lemma \ref{lem:phi3:auxiliar} yield that
	\[
		(\id\otimes\varphi_3)c_{1,2}(w_1\otimes v_1''')
    =-\rho(x_1)\sigma(z)w_1\otimes v_1'''.
	\]
	Therefore $\varphi_4(w_1\otimes
	v_1''')=(\id-c_{X_3,W}c_{W,X_3}+(\id\otimes\varphi_3)c_{1,2})(w_1\otimes
	v_1''')=0$.  This proves the lemma.
\end{proof}

We summarize the results of this section in the following
proposition.

\begin{pro}
	\label{pro:cartan_matrix}
    Let $V=M(z,\rho)$ and $W=M(x_1,\sigma)$. Assume that
    $\deg\rho=\deg \sigma =1$ and that
    $\rho (x_1)\sigma (z)\ne1$, $\sigma(x_1)=-1$ and $\rho(x_1z)\sigma(z)=1$.
    Then the following hold:
	\begin{enumerate}
        \item $(\ad V)(W)$ is absolutely simple and $(\ad V)^2(W)=0$.
        \item The Yetter-Drinfeld modules
			$(\ad W)^m(V)$ are absolutely simple or zero for all
			$m\in\N_0$ if and only if
            $(\rho (x_1)\sigma (z))^2-\rho(x_1)\sigma (z)+1=0$ and 
		    $\epsilon =1$.
            In this case, $(\ad W)^3(V)\not=0$ and $(\ad W)^4(V)=0$.  
	\end{enumerate}
\end{pro}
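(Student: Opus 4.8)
The plan is to prove Proposition~\ref{pro:cartan_matrix} by assembling the lemmas of this section, which already compute each relevant Yetter–Drinfeld module $X_m^{V,W}$ and $X_m^{W,V}$ step by step. First I would establish part (1). Under the hypothesis $\rho(x_1)\sigma(z)\ne1$, Lemma~\ref{lem:X1} gives that $X_1^{V,W}$ is absolutely simple and nonzero, which via $(\ad V)(W)\simeq X_1^{V,W}$ (Lemma~\ref{lem:X_n}) shows $(\ad V)(W)$ is absolutely simple. To get $(\ad V)^2(W)=0$, I would invoke Lemma~\ref{lem:X2}: the vanishing of $X_2^{V,W}$ is equivalent to $(1+\rho(z))(1-\rho(x_1z)\sigma(z))=0$, and the hypothesis $\rho(x_1z)\sigma(z)=1$ makes the second factor vanish. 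Hence $(\ad V)^2(W)=0$, completing~(1).

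Next I would treat part (2), the equivalence for the modules $(\ad W)^m(V)$. For the forward direction, suppose all $(\ad W)^m(V)$ are absolutely simple or zero. Lemma~\ref{lem:Y1} already gives that $X_1^{W,V}$ is absolutely simple under the standing hypotheses. The decisive step is Lemma~\ref{lem:Y2}, which states that $X_2^{W,V}$ is absolutely simple if and only if $\epsilon=1$ and $(\rho(x_1)\sigma(z))^2-\rho(x_1)\sigma(z)+1=0$; since absolute simplicity of $(\ad W)^2(V)$ is part of the assumption and $X_2^{W,V}$ is nonzero (its generator $v_1''$ is nonzero by Lemma~\ref{lem:phi2}), I conclude the two conditions~\eqref{eq:Y2:conditions} must hold. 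For the converse, assuming~\eqref{eq:Y2:conditions}, Lemmas~\ref{lem:Y1}, \ref{lem:Y2}, and~\ref{lem:Y3} exhibit $X_1^{W,V}$, $X_2^{W,V}$, and $X_3^{W,V}$ as the absolutely simple modules $M(x_1z,\rho_1)$, $M(x_2x_3z,\rho_2)$, and $M(x_1x_2x_3z,\rho_3)$ respectively, while $X_0^{W,V}=V$ is absolutely simple by hypothesis. Finally Lemma~\ref{lem:Y4} gives $X_4^{W,V}=0$, and since $X_{m+1}^{W,V}=\varphi_{m+1}(W\otimes X_m^{W,V})$, vanishing propagates: $X_m^{W,V}=0$ for all $m\ge4$. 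Thus every $(\ad W)^m(V)$ is absolutely simple or zero.

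For the residual assertions in~(2) under condition~\eqref{eq:Y2:conditions}, I would note that $(\ad W)^3(V)\simeq X_3^{W,V}\ne0$ follows from Lemma~\ref{lem:Y3} together with the explicit nonzero generator $v_1'''$ of Lemma~\ref{lem:v1'''}, and $(\ad W)^4(V)\simeq X_4^{W,V}=0$ is exactly Lemma~\ref{lem:Y4}. I do not anticipate a genuine obstacle here, since the heavy lifting is done by the preceding lemmas; the one point requiring care is the logical structure of the \emph{if and only if} in~(2), namely checking that the hypothesis ``absolutely simple or zero for all $m$'' forces precisely the conditions~\eqref{eq:Y2:conditions} and not merely implies them. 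The crux is therefore applying Lemma~\ref{lem:Y2} in the contrapositive: if either $\epsilon\ne1$ or the quadratic relation fails, then $X_2^{W,V}$ is a nonzero module that is \emph{not} absolutely simple, violating the assumption. Writing this direction cleanly is the only step that demands attention; everything else is a matter of citing the correct lemma and tracking the isomorphisms $(\ad W)^m(V)\simeq X_m^{W,V}$ supplied by Lemma~\ref{lem:X_n}.
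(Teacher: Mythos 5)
Your proposal is correct and takes essentially the same route as the paper, whose proof simply derives part (1) from Lemmas~\ref{lem:X1} and \ref{lem:X2} and part (2) from Lemmas~\ref{lem:Y1}, \ref{lem:Y2}, \ref{lem:Y3} and \ref{lem:Y4}. Your extra care in the forward direction of (2)---using the non-vanishing of $v_1''$ from Lemma~\ref{lem:phi2} to conclude that $X_2^{W,V}\ne 0$, hence must be absolutely simple, so that the equivalence in Lemma~\ref{lem:Y2} forces \eqref{eq:Y2:conditions}---correctly fills in a detail the paper leaves implicit.
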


\begin{proof}
	The first two claims follow from Lemmas \ref{lem:X1} and \ref{lem:X2}. The
	others are Lemmas \ref{lem:Y1}, \ref{lem:Y2}, \ref{lem:Y3} and \ref{lem:Y4}.
\end{proof}

Before proving Theorem~\ref{thm:T},
we need three more technical lemmas. Recall that if
$X$ is a finite-dimensional Yetter-Drinfeld module over $G$, then the dual
space $X^*$ is also a Yetter-Drinfeld module with
\[
    (gf)(x)=f(g^{-1}x),\quad
    f_{(-1)}\otimes f_{(0)}(y)=h^{-1}\otimes f(y)
\]
for all $g,h\in G$, $x\in X$, $y\in X_h$ and $f\in X^*$, where
$\delta(f)=f_{(-1)}\otimes f_{(0)}$. Further, if $X$ is simple then so is $X^*$.
In particular, $M(x,\gamma)^*\simeq M(x^{-1},\gamma^*)$ for all $x\in G$ and
all finite-dimensional representations $\rho$ of $G^x=G^{x^{-1}}$, where
$\gamma^*$ is the dual representation of $\gamma$. 

\begin{rem}
    \label{rem:T:R1}
    Let $x_1'\coloneqq x_1z$, $x_2'\coloneqq x_2z$, $x_3'\coloneqq x_3z$ and
    $x_4'\coloneqq x_4z$. Then $G=\langle x_1',x_2',x_3',x_4',z^{-1}\rangle$
    and the map $T\to G$,
    \[
        \chi_1\mapsto x_1',\quad
        \chi_2\mapsto x_2',\quad
        \chi_3\mapsto x_3',\quad
        \chi_4\mapsto x_4',\quad
        \zeta\mapsto z^{-1},
    \]
    is a group homomorphism. 
\end{rem}

\begin{lem}
    \label{lem:T:R1}
    Assume that
    $(\rho(x_1)\sigma(z))^2-\rho(x_1)\sigma(z)+1=0$, $\sigma(x_1)=-1$,
    $\sigma(x_2x_3)=1$, and $\rho(x_1z)\sigma(z)=1$. Then 
    \[
        R_1(V,W)=\left(V^*,X_1^{V,W}\right)
    \]
    with $V^*\simeq M(z^{-1},\rho^*)$, where $\rho^*$ is the irreducible
    representation of $G^z$ dual to $\rho$, $X_1^{V,W}\simeq M(x_1z,\sigma_1)$,
    where $\sigma_1$ is the irreducible representation of $G^{x_1z}$ given in
    Lemma \ref{lem:X1}, and 
    \begin{align}
        \label{eq:T:conditions:R1:1}&\sigma_1(x_1z)=-1,&
        &\sigma_1(x_2x_3z^2)=1,\\
        \label{eq:T:conditions:R1:2}&\rho^*(x_1)\sigma_1(z^{-1})=1,&
        &(\rho^*(x_1z)\sigma_1(z^{-1}))^2
        -\rho^*(x_1z)\sigma_1(z^{-1})+1=0.
    \end{align}
\end{lem}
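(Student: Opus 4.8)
The plan is to read the statement off three results already in hand---the reflection formula used in the proof of Proposition~\ref{pro:deg=1}, the duality $M(x,\gamma)^*\simeq M(x^{-1},\gamma^*)$ recalled just before the lemma, and Lemma~\ref{lem:X1}---and then to verify the four scalar identities by a direct character computation.

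First I would check that the hypotheses trigger Proposition~\ref{pro:cartan_matrix}. Writing $t=\rho(x_1)\sigma(z)$, the assumption $t^2-t+1=0$ forces $t\neq1$ (as $t=1$ gives $1=0$), so $\rho(x_1)\sigma(z)\neq1$; together with $\sigma(x_1)=-1$ and $\rho(x_1z)\sigma(z)=1$, and with $\deg\rho=\deg\sigma=1$ (the latter since $G^{x_1}$ is abelian), these are exactly the hypotheses of Proposition~\ref{pro:cartan_matrix}. Its part~(1) gives that $(\ad V)(W)\simeq X_1^{V,W}$ is absolutely simple and $(\ad V)^2(W)=0$, whence $a_{12}^{(V,W)}=-1$; exactly as in the proof of Proposition~\ref{pro:deg=1}, this yields $R_1(V,W)=(V^*,X_1^{V,W})$. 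The isomorphism $V^*\simeq M(z^{-1},\rho^*)$ is the instance $x=z$, $\gamma=\rho$ of the duality formula, and $X_1^{V,W}\simeq M(x_1z,\sigma_1)$ with $\sigma_1(x_1)=-\rho(x_1)$, $\sigma_1(x_1x_2x_3)=-\rho(x_1)^3$, $\sigma_1(z)=\rho(z)\sigma(z)$ is Lemma~\ref{lem:X1} read with $\epsilon=\sigma(x_2x_3)=1$.

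It then remains to verify the four scalar equations, using additionally that $\rho^*(g)=\rho(g)^{-1}$ for the one-dimensional $\rho$ and that all characters factor multiplicatively since $G^{x_1}$ is abelian. For \eqref{eq:T:conditions:R1:1} I would compute $\sigma_1(x_1z)=-\rho(x_1)\rho(z)\sigma(z)=-\rho(x_1z)\sigma(z)=-1$; then, extracting $\sigma_1(x_2x_3)=\sigma_1(x_1x_2x_3)\sigma_1(x_1)^{-1}=\rho(x_1)^2$, I get $\sigma_1(x_2x_3z^2)=\rho(x_1)^2(\rho(z)\sigma(z))^2=(\rho(x_1z)\sigma(z))^2=1$. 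The first equation of \eqref{eq:T:conditions:R1:2} is this computation inverted: $\rho^*(x_1)\sigma_1(z^{-1})=(\rho(x_1)\rho(z)\sigma(z))^{-1}=1$.

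The only step beyond immediate bookkeeping is the last equation of \eqref{eq:T:conditions:R1:2}, where the quadratic relation for $(V,W)$ must transport to the quadratic relation for $R_1(V,W)$. Here I would use $\rho(x_1z)\sigma(z)=1$ in the form $\rho(z)=t^{-1}$ to simplify $\rho^*(x_1z)\sigma_1(z^{-1})=\rho(x_1)^{-1}\rho(z)^{-2}\sigma(z)^{-1}=(\rho(x_1)\sigma(z))^{-1}\rho(z)^{-2}=t^{-1}t^2=t$, so that $\rho^*(x_1z)\sigma_1(z^{-1})=\rho(x_1)\sigma(z)$ indeed satisfies $t^2-t+1=0$ by hypothesis. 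I expect the main obstacle to be purely notational: keeping the convention $\rho^*=\rho^{-1}$ and the substitution $\rho(z)=t^{-1}$ straight throughout, so that the various powers of $\rho(z)$, $\rho(x_1)$ and $\sigma(z)$ collapse cleanly onto $t$.
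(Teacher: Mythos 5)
Your proposal is correct and matches the paper's own proof essentially step for step: the paper likewise derives $R_1(V,W)=\left(V^*,X_1^{V,W}\right)$ from Proposition~\ref{pro:cartan_matrix} and Lemma~\ref{lem:X1}, then verifies the four scalar identities by direct character computation, including the same key simplification $\rho^*(x_1z)\sigma_1(z^{-1})=\rho(x_1z)^{-1}\rho(z)^{-1}\sigma(z)^{-1}=\rho(x_1)\sigma(z)$ via $\rho(x_1z)\sigma(z)=1$. Your extraction of $\sigma_1(x_2x_3)=\rho(x_1)^2$ from $\sigma_1(x_1x_2x_3)$ and your preliminary observation that $t^2-t+1=0$ forces $\rho(x_1)\sigma(z)\ne1$ are both accurate and consistent with the paper.
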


\begin{proof}
    Since $\sigma(x_1)=-1$, $\rho(x_1)\sigma(z)\ne1$, and $\rho
    (x_1z)\sigma(z)=1$, the description of
    $R_1(V,W)$ follows from Proposition \ref{pro:cartan_matrix} and Lemma
    \ref{lem:X1}. Further, $\sigma_1(x_1)=-\rho(x_1)$,
    $\sigma_1(x_2x_3)=\sigma(x_2x_3)\rho(x_1)^2$ and
    $\sigma_1(z)=\rho(z)\sigma(z)$. Then $\sigma_1(x_1z)=-1$ and 
    \[
        \sigma_1(x_2x_3z^2)=\sigma(x_2x_3)\rho(x_1)^2\rho(z)^2\sigma(z)^2=1.
    \]
    Further,
    \begin{align*}
      \rho^*(x_1)\sigma_1(z^{-1})=&\rho(x_1)^{-1}\rho(z^{-1})\sigma(z^{-1})=1,
    \end{align*}
    which proves the first equation in \eqref{eq:T:conditions:R1:2}.
    Since
    \begin{align*}
      \rho^*(x_1z)\sigma_1(z^{-1})=&\rho(x_1z)^{-1}\rho(z)^{-1}\sigma(z)^{-1}=
      \rho (x_1)\sigma (z),
    \end{align*}
    the second equation in \eqref{eq:T:conditions:R1:2} also holds.
\end{proof}

\begin{rem}
    \label{rem:T:R2}
    Let $x_1''= x_1^{-1}$, $x_2''=x_2^{-1}$, $x_3''=x_4^{-1}$,
    $x_4''=x_3^{-1}$, and $z''=x_1x_2x_3z$. Then $G=\langle
    x_1'',x_2'',x_3'',x_4'',z''\rangle$ and the map $T\to G$,
    \[
        \chi_1\mapsto x_1'',\quad
        \chi_2\mapsto x_2'',\quad
        \chi_3\mapsto x_3'',\quad
        \chi_4\mapsto x_4'',\quad
        \zeta\mapsto z'',
    \]
    is a group homomorphism. 
\end{rem}

\begin{lem}
    \label{lem:T:R2}
    Assume that $(\rho(x_1)\sigma(z))^2-\rho(x_1)\sigma(z)+1=0$, $\sigma(x_1)=-1$,
    $\sigma(x_2x_3)=1$, and $\rho(x_1z)\sigma(z)=1$. Let
    $x_1''$, $x_2''$, $x_3''$, $x_4''$ and $z''$ be as in Remark~\ref{rem:T:R2}.
    Then 
    \[
        R_2(V,W)=\left(X_3^{W,V},W^*\right)
    \]
    with $X_3^{W,V}\simeq M(z'',\rho_3)$, where 
    $\rho_3$ is the irreducible representation of $G$ given in
    Lemma \ref{lem:Y3}, $W^*\simeq M(x_1'',\sigma^{*})$, where $\sigma^*$ is
    the irreducible representation of $G^{x_1}$ dual to $\sigma$, and 
    \begin{align}
        \label{eq:T:conditions:R2:1}
        &\sigma^*(x_1'')=-1, & &\sigma^*(x_2''x_3'')=1,\\
        \label{eq:T:conditions:R2:2}
        & \rho_3(x_1''z'')\sigma^*(z'')=1,&
        &(\rho_3(x_1'')\sigma^*(z''))^2
        -\rho_3(x_1'')\sigma^*(z'')+1=0.
    \end{align}
\end{lem}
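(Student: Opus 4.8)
The plan is to prove Lemma~\ref{lem:T:R2} by the same reflection-computation strategy used for Lemma~\ref{lem:T:R1}, but applied to the second reflection $R_2$. By definition of the Weyl groupoid, the second reflection sends the pair $(V,W)$ to $\bigl((\ad W)^{a}(V)^{?}, W^*\bigr)$, where the relevant entry is controlled by the Cartan matrix. Under the stated hypotheses, Proposition~\ref{pro:cartan_matrix} tells us that $(\ad W)^m(V)$ is absolutely simple or zero for all $m$, that $(\ad W)^3(V)\ne 0$, and that $(\ad W)^4(V)=0$, so $a_{21}^{(V,W)}=-3$. Thus the top nonzero adjoint module is $(\ad W)^3(V)\simeq X_3^{W,V}$, and the standard description of the reflection $R_2$ yields $R_2(V,W)=\bigl(X_3^{W,V},W^*\bigr)$. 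The module $X_3^{W,V}\simeq M(x_1x_2x_3z,\rho_3)=M(z'',\rho_3)$ by Lemma~\ref{lem:Y3} together with the identification $z''=x_1x_2x_3z$ from Remark~\ref{rem:T:R2}, and $W^*\simeq M(x_1^{-1},\sigma^*)=M(x_1'',\sigma^*)$ by the duality formula $M(x,\gamma)^*\simeq M(x^{-1},\gamma^*)$ recalled just before Remark~\ref{rem:T:R1}.

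\emph{Next I would} verify the four numerical equations \eqref{eq:T:conditions:R2:1} and \eqref{eq:T:conditions:R2:2} by direct substitution, exactly mirroring the computation in Lemma~\ref{lem:T:R1}. For \eqref{eq:T:conditions:R2:1}: since $x_1''=x_1^{-1}$ and $\sigma(x_1)=-1$, we get $\sigma^*(x_1'')=\sigma(x_1^{-1})^{-1}=\sigma(x_1)=-1$; and since $x_2''x_3''=x_2^{-1}x_4^{-1}$ (using the definitions in Remark~\ref{rem:T:R2}), one re-expresses this in terms of the central combination $x_2x_3$ using the quandle relations to obtain $\sigma^*(x_2''x_3'')=1$ from $\sigma(x_2x_3)=1$. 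For \eqref{eq:T:conditions:R2:2}, I would substitute the values $\rho_3(x_1)=\rho(x_1)^2\sigma(z)$ and $\rho_3(z)=\rho(z)\sigma(z)^3$ from Lemma~\ref{lem:Y3}, together with $\sigma^*(z'')=\sigma^*(x_1x_2x_3z)=\sigma(x_1x_2x_3z)^{-1}$. Using $\sigma(x_1)=-1$ and $\sigma(x_2x_3)=1$ one computes $\sigma(x_1x_2x_3)=-1$, so $\sigma^*(z'')=-\sigma(z)^{-1}$. The hypothesis $\rho(x_1z)\sigma(z)=1$ and the quadratic relation on $\rho(x_1)\sigma(z)$ should then collapse both expressions to the required forms $\rho_3(x_1''z'')\sigma^*(z'')=1$ and $(\rho_3(x_1'')\sigma^*(z''))^2-\rho_3(x_1'')\sigma^*(z'')+1=0$.

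\emph{The main obstacle} I anticipate is bookkeeping with the dual and inverse generators rather than anything conceptual. The representation $\sigma^*$ evaluates inverted group elements, and the new distinguished element $z''=x_1x_2x_3z$ is a genuine product of non-central generators combined with the central $z$, so care is needed to track how $\sigma^*$ and $\rho_3$ interact with the centrality of $x_1x_2x_3$ (established in Section~\ref{subsection:T:preliminaries}). The cleanest route is to compute $\rho_3(x_1'')\sigma^*(z'')$ as a single product and recognize it as a power of $\rho(x_1)\sigma(z)$ (specifically, I expect it to equal $\rho(x_1)\sigma(z)$ or its inverse), after which the quadratic relation is immediate and the norm-one condition follows from $\rho(x_1)\sigma(z)$ being a primitive sixth root of unity. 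Once the target quantity is identified as a power of $\rho(x_1)\sigma(z)$, the verification of \eqref{eq:T:conditions:R2:2} is mechanical. The whole proof is therefore a short substitution argument, structurally identical to the proof of Lemma~\ref{lem:T:R1}, and I would present it in the same terse style.
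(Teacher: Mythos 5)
Your proposal matches the paper's proof essentially step for step: the description $R_2(V,W)=\left(X_3^{W,V},W^*\right)$ is read off from Proposition~\ref{pro:cartan_matrix} together with Lemma~\ref{lem:Y3} and the duality formula $M(x,\gamma)^*\simeq M(x^{-1},\gamma^*)$, and the four scalar identities are checked by exactly the substitutions you outline, using $\sigma^*(g)=\sigma(g^{-1})$, the group identity $x_2^{-1}x_4^{-1}=(x_4x_2)^{-1}=(x_2x_3)^{-1}$, $\sigma^*(z'')=(-\sigma(z))^{-1}$, and $(\rho(x_1)\sigma(z))^3=-1$, with $\rho_3(x_1'')\sigma^*(z'')=\rho(x_1)\sigma(z)$ exactly as you predicted. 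One cosmetic correction: $x_2x_3$ is not central in $G$ (only $x_1x_2x_3$ is), but it lies in the abelian centralizer $G^{x_1}$, which is all the computation needs.
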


\begin{proof}
    The description of $R_2(V,W)$ follows from
    Proposition \ref{pro:cartan_matrix} and Lemma \ref{lem:Y3}.
    Equation~\eqref{eq:T:conditions:R2:1} follows from the formulas
    \begin{gather*}
      \sigma^*(x_1^{-1})=\sigma(x_1)=-1,\\
      \sigma^*(x_2^{-1}x_4^{-1})=\sigma^*((x_4x_2)^{-1})=\sigma(x_2x_3)=1.
    \end{gather*}
    Similarly, \eqref{eq:T:conditions:R2:2} follows from the calculations
    \begin{align*}
      &\rho_3(x_2x_3z)\sigma^*(x_1x_2x_3z)=
      \rho(x_1)^4\sigma(z)^5\rho(z)(-\sigma (z))^{-1}
      =-\rho(x_1)^3\sigma(z)^3=1,\\
      &\rho_3(x_1^{-1})\sigma^*(x_1x_2x_3z)=
      \rho(x_1)^{-2}\sigma(z)^{-1}(-\sigma (z))^{-1}
      =\rho(x_1)\sigma(z),
    \end{align*}
    where the last equation is valid because of $(\rho (x_1)\sigma(z))^3=-1$.
\end{proof}

\begin{lem}
    \label{lem:T:braided_isomorphisms}
    Assume that $(\rho(x_1)\sigma(z))^2-\rho(x_1)\sigma(z)+1=0$, $\sigma(x_1)=-1$,
    $\epsilon =1$, and $\rho(x_1z)\sigma(z)=1$.
    Then
    \begin{align}
        \label{eq:V}&V\simeq M(z^{-1},\rho^*)\simeq M(x_1x_2x_3z,\rho_3),\\
        \label{eq:W}&W\simeq M(x_1z,\sigma_1)\simeq M(x_1^{-1},\sigma^*)
    \end{align}
    as braided vector spaces.
\end{lem}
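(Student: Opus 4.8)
The plan is to exploit that each module occurring in \eqref{eq:V} and \eqref{eq:W} has the form $M(g,\chi)$ with $\deg\chi=1$, so that its underlying braided vector space is of rack type and is pinned down by a very small amount of data; the two isomorphisms then follow by matching that data against the scalar computations already made in Lemmas \ref{lem:X1}, \ref{lem:Y3}, \ref{lem:T:R1} and \ref{lem:T:R2}.

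For \eqref{eq:V} I would argue as follows. The elements $z$, $z^{-1}$ and $x_1x_2x_3z$ are all central, so $V=M(z,\rho)$, $M(z^{-1},\rho^*)$ and $M(x_1x_2x_3z,\rho_3)$ are one-dimensional, and the self-braiding of a one-dimensional module $M(g,\chi)$ is multiplication by the single scalar $\chi(g)$. Hence it suffices to prove $\rho(z)=\rho^*(z^{-1})=\rho_3(x_1x_2x_3z)$. The first equality is the degree-one identity $\rho^*(z^{-1})=\rho(z)$. For the second I would use Lemma \ref{lem:Y3}, which gives $\rho_3(x_1x_2x_3z)=(\rho(x_1)^2\sigma(z))^3\rho(z)\sigma(z)^3=(\rho(x_1)\sigma(z))^6\rho(z)$, together with the observation that $\rho(x_1)\sigma(z)$ is a root of $t^2-t+1$ and therefore a primitive sixth root of unity, so $(\rho(x_1)\sigma(z))^6=1$ and the last expression equals $\rho(z)$.

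For \eqref{eq:W} the three modules are supported on $x_1^G$, $(x_1z)^G$ and $(x_1^{-1})^G$, all of which are quandle-isomorphic to the tetrahedron: central twists and inversion preserve the conjugation quandle, and Remarks \ref{rem:T:R1} and \ref{rem:T:R2} exhibit the corresponding homomorphisms $T\to G$. By Remark \ref{rem:G_on_generalW}, for a degree-one module on such a class the matrix $(q_{ij})$, hence the self-braiding $c(m_i\otimes m_j)=q_{ij}\,m_{i\trid j}\otimes m_i$, is determined by the two scalars $\chi(x_1)$ and $\epsilon$. Under $\sigma(x_1)=-1$ and $\epsilon=1$ this braiding is the constant cocycle $c(w_i\otimes w_j)=-w_{i\trid j}\otimes w_i$ of Remark \ref{rem:G_on_W}. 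By Lemma \ref{lem:T:R1} the module $M(x_1z,\sigma_1)$ satisfies $\sigma_1(x_1z)=-1$ and $\sigma_1(x_2x_3z^2)=1$, and by Lemma \ref{lem:T:R2} the module $M(x_1^{-1},\sigma^*)$ satisfies $\sigma^*(x_1'')=-1$ and $\sigma^*(x_2''x_3'')=1$; in both cases the pair (diagonal value, $\epsilon$) is $(-1,1)$, so each carries the same constant cocycle $-1$ on a tetrahedron. Matching the canonical bases along the quandle isomorphisms then yields the two braided isomorphisms in \eqref{eq:W}.

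The hard part will not be any single computation but rather setting up the framework correctly: verifying that the pair $(\chi(x_1),\epsilon)$ is a complete invariant of the braided vector space for a degree-one module on the tetrahedron, and then identifying the correct scalar as $\epsilon$ for each module. This is most delicate for $W^*=M(x_1^{-1},\sigma^*)$, where the reindexing $x_3''=x_4^{-1}$ of Remark \ref{rem:T:R2} must be used so that $\epsilon$ is read off as $\sigma^*(x_2''x_3'')$ rather than from a naive choice of class representatives. Once this is in place, everything reduces to the scalar identities already recorded and to the sixth-root-of-unity computation above.
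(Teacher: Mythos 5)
Your proposal is correct and follows essentially the same route as the paper: for \eqref{eq:V} the paper likewise reduces to the scalar identities $\rho^*(z^{-1})=\rho(z)$ and $\rho_3(x_1x_2x_3z)=(\rho(x_1)\sigma(z))^6\rho(z)=\rho(z)$, and for \eqref{eq:W} it uses precisely your constant-cocycle argument via Remark~\ref{rem:G_on_W}, Remark~\ref{rem:T:R2} and Lemma~\ref{lem:T:R2} for $M(x_1^{-1},\sigma^*)$, treating $M(x_1z,\sigma_1)$ by the equivalent direct computation $x_izw_j'=-\rho(x_1z)\sigma(z)w_{i\trid j}'=-w_{i\trid j}'$ on the basis $w_i'=v\otimes w_i$ of $X_1^{V,W}$, which rests on the same scalar data you extract from Lemma~\ref{lem:T:R1}. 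One small imprecision: in characteristic $2$ or $3$ a root of $t^2-t+1$ is not a \emph{primitive} sixth root of unity, but the identity $(\rho(x_1)\sigma(z))^6=1$ that you actually use still holds, since $t^2-t+1$ divides $t^6-1$ over $\Z$.
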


\begin{proof}
   Let $f:V\to M(z^{-1},\rho^*)$ be a non-zero linear map.
   Then
   \begin{align*}
       (f\otimes f)c_{V,V}(v\otimes v)=(f\otimes f)(zv\otimes v)
       =\rho(z)f(v)\otimes f(v).
   \end{align*}
   On the other hand, 
   \begin{align*}
       c_{V^*,V^*}(f\otimes f)(v\otimes v)=z^{-1}f(v)\otimes f(v)=\rho
       ^*(z^{-1})f(v)\otimes f(v).
   \end{align*}
   Since $\rho ^*(z^{-1})=\rho(z)$, we conclude that $V$ and
   $M(z^{-1},\rho^*)$ are
   isomorphic as braided vector spaces. Similarly, $V$ and
   $M(x_1x_2x_3z,\rho_3)$ are isomorphic as braided vector spaces,
   since $\rho_3(x_1x_2x_3z)=\rho(z)$. Indeed,
   $$\rho _3(x_1x_2x_3z)=(\rho(x_1)^2\sigma(z))^3\rho(z)\sigma(z)^3
     =(\rho(x_1)\sigma(z))^6\rho(z)=\rho(z)
   $$
   by Lemma~\ref{lem:Y3} and since $(\rho (x_1)\sigma(z))^6=1$.

   We now prove that $W\simeq X_1^{V,W}$ as braided vector spaces.
   Then $W$ and $M(x_1z,\sigma_1)$ are isomorphic as braided vector spaces by
   Lemma~\ref{lem:X1}. Let  
   \[
   f\in \mathrm{Hom}\left(W,X_1^{V,W}\right),\quad
   w_i\mapsto w_i'\text{ for $i\in\{1,2,3,4\}$,}
   \]
   where $w_i'=v\otimes w_i$ for all $i$.
   Then
   \begin{align*}
       (f\otimes f)&c_{W,W}(w_i\otimes w_j)
       =(f\otimes f)(x_iw_j\otimes w_i)=-f(w_{i\triangleright j})\otimes
       f(w_i),
   \end{align*}
   and on the other hand 
	\begin{align*}
		c_{X_1,X_1}&(f\otimes f)(w_i\otimes w_j)
		=c_{X_1,X_1}(w_i'\otimes w_j')=x_izw_j'\otimes w_i'.
	\end{align*}
  Since $x_iw_j=-w_{i\triangleright j}$ and
  $x_izw_j'=-\rho (x_1z)\sigma(z)w'_{i\triangleright j}
  =-w'_{i\triangleright j}$ by Remark~\ref{rem:G_on_W},
  we conclude that $W\simeq X_1^{V,W}$ as braided vector spaces.

  Similarly, $W$ and $M(x_1^{-1},\sigma^*)$
  are isomorphic as braided vector spaces.
  Indeed, let $x_1''$, $x_2''$, $x_3''$, and $x_4''$ be as in Remark~\ref{rem:T:R2}.
  Then $\sigma^*(x_1'')=-1$ and $\sigma^*(x_2''x_3'')=1$
  by Lemma~\ref{lem:T:R2}, and hence by Remark~\ref{rem:G_on_W} there is a
  basis $w_1'',w_2'',w_3'',w_4''$ of $M(x_1^{-1},\sigma^*)$
  such that $x_i''w_j''=-w_{i\triangleright j}''$ for all $i,j\in
  \{1,2,3,4\}$. This implies that $W\simeq M(x_1^{-1},\sigma^*)$ as braided
  vector spaces.
\end{proof}

\begin{proof}[Proof of Theorem \ref{thm:T}]
    $(1)\implies(2)$.  Since $\NA(V\oplus W)<\infty$, the pair $(V,W)$ admits
    all reflections by \cite[Cor 3.18]{MR2766176} and the Weyl groupoid is
    finite by \cite[Prop.\,3.23]{MR2766176}. 

    $(2)\implies(3)$. By \cite[Prop.\,4.3]{partII}, after changing the object of
    $\mathcal{W}(V,W)$ and possibly interchanging $V$ and $W$, we may assume
    that $V=M(z,\rho)$ and $W=M(x_1,\sigma)$ satisfy $(\ad V)(W)\ne0$, $(\ad
    V)^2(W)=0$ and $(\ad W)^4(V)=0$. Further, $\deg\rho=1$ by Proposition~\ref{pro:deg=1}.
    By \cite[Thm.\,7.2(3)]{MR2734956}, $(\ad W)^m(V)$ is
    absolutely simple or zero for all $m\in\N_0$.
    Lemma~\ref{lem:sigma(x1)=-1}
    implies that $(\ad W)^2(V)\ne0$ and $(\ad W)^3(V)\ne0$. Hence
    $\sigma(x_1)=-1$ by Lemma~\ref{lem:sigma(x1)=-1}. Since $(\ad V)(W)$ is
    non-zero, we obtain from Lemma \ref{lem:X1} that $\rho(x_1)\sigma(z)\ne1$.
    Further,
    $(\ad W)^2(V)$ is absolutely simple, and hence $\sigma(x_2x_3)=1$
    and
    $(\rho(x_1)\sigma(z))^2-\rho(x_1)\sigma(z)+1=0$ by Lemma \ref{lem:Y2}.
    Since $(\ad V)^2(W)=0$, we obtain that
    $R_1(V,W)=\left(V^*,X_1^{V,W}\right)$.
    Now $\supp X_1^{V,W}\simeq(x_1z)^G\simeq\chi_1^T$ and $\supp V^*\simeq
    \supp V$
    imply that $(\ad X_1^{V,W})^3(V^*)$ is absolutely simple or zero by
    \cite[Thm.\,7.2(3)]{MR2734956}. Hence $\sigma_1(x_1z)=-1$ by
    Lemma~\ref{lem:sigma(x1)=-1} and therefore
    $\rho(x_1z)\sigma(z)=1$ by Lemma~\ref{lem:X1}. 

    $(3)\implies(1)$.  By Proposition \ref{pro:cartan_matrix} and Lemmas
    \ref{lem:T:R1} and \ref{lem:T:R2} the Weyl groupoid of $(V,W)$ is standard
    with Cartan matrix of type $G_2$.  Suppose that the Cartan matrix of
    $(V,W)$ is $A^{(V,W)}=
		\begin{pmatrix}
			2 & -1\\
			-3 & 2
		\end{pmatrix}
	$. 
    Then $s_2s_1s_2s_1s_2s_1$ is a reduced decomposition of the longest word in
    the Weyl group of $A^{(V,W)}$. With respect to this reduced decomposition one
    obtains 
	\begin{align*}
		&\beta_1=\alpha_2, && \beta_2=3\alpha_2+\alpha_1,\\
		&\beta_3=2\alpha_2+\alpha_1, &&\beta_4=3\alpha_2+2\alpha_1,\\
		&\beta_5=\alpha_2+\alpha_1, &&\beta_6=\alpha_1,
	\end{align*}
    where $\{\alpha_1,\alpha_2\}$ is the standard basis of $\Z^2$. 
    Since $A^{(V,W)}$ is of finite Cartan type, the set of real roots
    associated to the pair $(V,W)$ is finite by \cite[Cor.  2.4]{MR2732989}.
    By \cite[Thm.\,2.6]{MR2732989}, 
    \[
        \NA(V\oplus W)\simeq \NA(M_{\beta_6})\otimes
        \NA(M_{\beta_5})\otimes
        \cdots
        \otimes
        \NA(M_{\beta_2})\otimes
        \NA(M_{\beta_1}),
    \]
    as $\N_0^2$-graded vector spaces, where $\deg M_{\beta_j}=\beta_j$ for all
		$j\in\{1,\dots,6\}$, $M_{\beta _1}=W$, $M_{\beta _6}=V$, and 
		$M_{\beta_2},M_{\beta_3},M_{\beta _4},M_{\beta_5}\subseteq\NA(V\oplus W)$ are
    certain finite-dimensional simple subobjects in $\ydG$. Moreover,
    Lemmas \ref{lem:T:R1}, \ref{lem:T:R2} and \ref{lem:T:braided_isomorphisms}
    imply that
	\begin{gather*}
        M_{\beta_1}\simeq M_{\beta_3}\simeq M_{\beta_5}\simeq W\text{ and }
        M_{\beta_2}\simeq M_{\beta_4}\simeq M_{\beta_6}\simeq V
	\end{gather*}
    as braided vector spaces.
		Indeed, by Lemma~\ref{lem:T:R1} we can apply our theory to
		the pair $R_1(V,W)=(V^*,X_1^{V,W})$ if we replace $z,\rho,x_1$, and $\sigma
		$ by $z^{-1},\rho^*,x_1z$, and $\sigma _1$, respectively, and also to the
		pair $R_2(V,W)$ by Lemma~\ref{lem:T:R2}. Since
		$s_1s_2(\alpha_1)=3\alpha_2+2\alpha_1$, we conclude
    from \cite[Thm.\,2.6]{MR2732989}(1), that $M_{\beta _4}$ is
		isomorphic to the first entry of $R_1R_2(V,W)$ in $\ydG$, and hence to $V$
		as a braided vector space by iterated application of
		Lemma~\ref{lem:T:braided_isomorphisms}. The other isomorphisms follow similarly.
		Therefore the Nichols algebras of the braided
		vector spaces $M_{\beta _k}$, $1\le k\le 6$,
		are finite-dimensional with Hilbert series 
    \[
    \mathcal{H}_{\NA(M_{\beta_k})}(t)=
	\begin{cases}
		(6)_t & \text{if $\mathrm{char}\,\K\ne2$,}\\
		(3)_t & \text{if $\mathrm{char}\,\K=2$},
	\end{cases}
    \]
    for all $k\in\{2,4,6\}$, see \cite[Section 3]{MR2207786}, and 
    \[
    \mathcal{H}_{\NA(M_{\beta_l})}(t)=
	\begin{cases}
		(2)^2_t(3)_t(6)_t & \text{if $\mathrm{char}\,\K\ne2$,}\\
		(2)^2_t(3)_t^2 & \text{if $\mathrm{char}\,\K=2$},
	\end{cases}
	\]
    for all $l\in\{1,3,5\}$, see \cite[Thm.\,6.15]{MR1994219} and
    \cite[Prop.\,5.6]{MR2803792}. From this the claim follows.
\end{proof}

\section{Nichols algebras over epimorphic images of $\Gamma_4$}
\label{section:basics:G4}

\subsection{Preliminaries}
\label{subsection:G4:preliminaries}

Recall from \cite[Section 3]{MR2732989} that the group $\Gamma_n$ for $n\geq2$ is
isomorphic to the group given by generators $a$, $b$, $\nu $ and relations
\begin{align*}
	&ba=\nu ab,\quad
    a\nu=\nu^{-1}a,\quad
    b\nu=\nu b,\quad
    \nu^n=1.
\end{align*}
The case $n=2$ was studied in \cite{MR2732989},
and the case $n=3$ appears to be more complicated.
Here we concentrate on the case where $n=4$.  By
\cite[Section 3]{MR2732989}, the center of $\Gamma_4$ is
$Z(\Gamma_4)=\langle\nu^{-1}b^2,b^4,a^2\rangle$. 

In what follows, let $G$ be a group and let $g,h,\epsilon\in G$. Assume that
$G=\langle g,h,\epsilon\rangle$, $\epsilon^2\ne1$, and that there is a group
homomorphism $\Gamma_4\to G$ with $a\mapsto g$, $b\mapsto h$ and
$\nu\mapsto\epsilon$. Then $G$ is a non-abelian quotient of $\Gamma_4$ such
that $|g^G|=4$ and $|h^G|=2$. Further, $\epsilon^{-1}h^2,h^4,g^2\in Z(G)$.

Let $V=M(h,\rho)$, where $\rho$ is an absolutely irreducible representation of
the centralizer $G^h=\langle \epsilon,h,g^2\rangle=\langle h\rangle Z(G)$.
Then $\deg\rho =1$ since $G^h$ is abelian. Let $v\in V_h$ with $v\ne0$. The
elements $v$, $gv$ form a basis of $V$. The degrees of these basis vectors are
$h$ and $ghg^{-1}=\epsilon^{-1}h$, respectively. The support of $V$ is
isomorphic to the trivial quandle with two elements.

\begin{rem}
	\label{rem:G4:G_on_V}
	Assume that $\rho(h)=-1$. Then the action of $G$ on $V$ is given by the
	following table:
	\begin{center}
		\begin{tabular}{c|cc}
			$V$ & $v$ & $gv$ \tabularnewline
			\hline
			$\epsilon$ & $\rho(\epsilon)v$ & $\rho(\epsilon)^{-1}gv$ \tabularnewline
			$h$ & $-v$ & $-\rho(\epsilon)^{-1}gv$ \tabularnewline
			$g$ & $gv$ & $\rho(g^2)v$ \tabularnewline
		\end{tabular}
	\end{center}
\end{rem}

Let $W=M(g,\sigma)$, where $\sigma$ is an absolutely irreducible representation
of the centralizer $G^g=\langle \epsilon^2,\epsilon^{-1}h^2,g\rangle=\langle
g\rangle Z(G)$. Then $\deg \sigma=1$ since $G^g$ is abelian. Let $w\in W_g$
with $w\ne0$. The elements $w$, $hw$, $\epsilon w$, $\epsilon hw$ form a basis
of $W$. The degrees of these basis vectors are $g$, $\epsilon g$, $\epsilon^2g$
and $\epsilon^3 g$, respectively. The support of $W$ is isomorphic to the
dihedral quandle with four elements.

\begin{rem}
  \label{rem:G4:G_on_W}
  Assume that $\sigma(g)=-1$. Then the action of $G$ on $W$ is given by the
  following table:
  \begin{center}
    \begin{tabular}{c|cccc}
      $W$ & $w$ & $hw$ & $\epsilon w$ & $\epsilon hw$ \tabularnewline
      \hline
      $\epsilon$ & $\epsilon w$ & $\epsilon hw$ & $\sigma(\epsilon^2)w$ &
      $\sigma(\epsilon^2)hw$ \tabularnewline
      $h$ & $hw$ & $\sigma(\epsilon^{-1}h^2)\epsilon w$ & $\epsilon hw$ &
      $\sigma(\epsilon^2)\sigma(\epsilon^{-1}h^2)w$ \tabularnewline
      $g$ & $-w$ & $-\sigma(\epsilon^2)\epsilon hw$ &
      $-\sigma(\epsilon^2)\epsilon w$ & $-\sigma(\epsilon^2)hw$
      \tabularnewline
    \end{tabular}
  \end{center}
\end{rem}

\begin{rem}
	\label{rem:amalgamated}
	Let us describe the quandle structure of $\supp(V\oplus W)$.  Of course, the
	quandle $\supp(V\oplus W)$ is isomorphic to the conjugation quandle
	$h^{\Gamma_4}\cup g^{\Gamma_4}$. An alternative description for this quandle
	goes as follows:
	
	As we said before, $\supp V$ is a trivial quandle with two elements and
	$\supp W$ is a dihedral quandle with four elements. Thus we may assume that
	the quandle $\supp V$ is isomorphic to the quandle $Y=\{y_1,y_2\}$ given by
	$y_i\triangleright y_j=y_j$ for all $i,j\in\{1,2\}$ and that $\supp W$ is the
	quandle over $Z=\{z_1,z_2,z_3,z_4\}$ given by $z_i\triangleright
	z_j=z_{2i-j\bmod{4}}$ for all $i,j\in\{1,2,3,4\}$. The quandle $\supp(V\oplus
	W)$ is then isomorphic to the amalgamated sum of $Y$ and $Z$ with respect to
	the morphisms $\sigma(y)=(z_1\;z_2)$ for all $y\in Y$ and 
	\[
	\tau(z)=\begin{cases}
			(y_1\;y_2\;y_3\;y_4)&\text{if $z=z_1$},\\
			(y_1\;y_4\;y_3\;y_2)&\text{if $z=z_2$}.
		\end{cases}
	\]
	For the notion of amalgamated sum of quandles we refer to \cite[Lemma~1.18]{MR1994219}.
\end{rem}

\begin{lem}
	\label{lem:G4:rho(g)=-1}
		Let $V,W\in\ydG$ such that $\supp V=h^G$ and $\supp W=g^G$. Then the
		following hold:
		\begin{enumerate}
			\item $(\ad W)(V)$ and $(\ad W)^2(V)$ are non-zero. 
			\item If $(\ad V)^2(W)=0$ then $\dim V_h=1$ and $hv=-v$ for all $v\in V_h$.
			\item If $\supp(\ad W)^2(V)$ is a conjugacy class of $G$ then $\dim W_g=1$
				and $gw=-w$ for all $w\in W_{g}$.
		\end{enumerate}
\end{lem}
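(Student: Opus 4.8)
The plan is to follow the proof of Lemma~\ref{lem:sigma(x1)=-1}, with the tetrahedral quandle of $T$ replaced by the amalgamated quandle of $\Gamma_4$ from Remark~\ref{rem:amalgamated}. First I record the conjugation rules I need. Writing $g_j=\epsilon^j g$ for the four points of $\supp W$, the relations $ba=\nu ab$, $a\nu=\nu^{-1}a$, $b\nu=\nu b$ give
\[
h\triangleright g_j=g_{j+1},\qquad g_i\triangleright h=\epsilon^{-1}h,\qquad g_i\triangleright(\epsilon^{-1}h)=h,
\]
for all $i,j$; note that every point of $\supp W$ interchanges the two points of $\supp V$. Since $\epsilon^2\ne1$, the element $\epsilon$ has order $4$, so $g_{j+1}\ne g_j$ and $\epsilon^{-1}h\ne h$; this non-degeneracy drives everything below. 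I also use that $g^2\in Z(G)$.

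For part~(1), the non-vanishing of $(\ad W)(V)$ is immediate: for $w\in W_g$, $v\in V_h$ the map $c_{V,W}c_{W,V}$ sends $w\otimes v$ into bidegree $(g_3,\epsilon^{-1}h)$, so the $(g,h)$-component of $\varphi_1(w\otimes v)$ is $w\otimes v\ne0$, i.e. $(g,h)\in\supp Q_1(g,h)$. To reach $(\ad W)^2(V)\ne0$ I prepend a new point of $\supp W$ via \cite[Prop.\,5.5]{partII}. Taking $g_1$ at position $2$, one checks the quandle hypotheses of \cite[Prop.\,5.5]{partII} — here $g_1$ avoids the exceptional values $g$ and $h^{-1}\triangleright g=g_3$ — and obtains $(g_1\triangleright g,\,g_1,\,h)=(g_2,g_1,h)\in\supp Q_2(g_1,g,h)$. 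Thus $(\ad W)^2(V)\ne0$, and I record its total degree $g_2g_1h=\epsilon g^2h$ for later use.

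For part~(2) I apply Proposition~\ref{pro:degrees} to the pair $(W,V)$, i.e. with the roles of $V$ and $W$ interchanged, so that its conclusion now concerns $V_h$. The same bidegree-shift argument (now for $c_{W,V}c_{V,W}$ on $V\otimes W$) gives $(h,g)$ in the support of the corresponding $Q_1$, that is $(\ad V)(W)\ne0$. With $m=1$, $i=1$ and this tuple, the hypotheses of Proposition~\ref{pro:degrees} are $g\triangleright h=\epsilon^{-1}h\ne h$, which holds, together with the vanishing at the doubled degree $(h,h,g)$, which is exactly the assumption $(\ad V)^2(W)=0$. The proposition then yields $\dim V_h=1$ and $hv=-v$. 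For part~(3) I instead apply Proposition~\ref{pro:degrees} directly, with $m=1$, $i=1$ and the tuple $(g,h)\in\supp Q_1(g,h)$; the prepended element is $p_1=g$ and the required input is $Q_2(g,g,h)=0$, the doubled tuple of total degree $g^2h$. This is where the hypothesis that $\supp(\ad W)^2(V)$ is a conjugacy class enters: by part~(1) this class contains $\epsilon g^2h$, hence equals $(\epsilon g^2h)^G$. Because $g^2$ is central, conjugation acts only on the $\epsilon^k h$ part, and conjugation by $g$ sends $\epsilon^k h\mapsto\epsilon^{-k-1}h$; thus $\{h,\epsilon^{-1}h\}$ and $\{\epsilon h,\epsilon^2h\}$ lie in different classes, so $g^2h\notin(\epsilon g^2h)^G$ and $Q_2(g,g,h)=0$. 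Since also $h\triangleright g=g_1\ne g$, Proposition~\ref{pro:degrees} gives $\dim W_g=1$ and $gw=-w$.

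The main obstacle is the bookkeeping in parts~(1) and~(3): I must choose the new element and its position in \cite[Prop.\,5.5]{partII} so that the surviving tuple acquires total degree $\epsilon g^2h$ rather than $g^2h$, and then confirm, via the central-element computation above, that these two degrees are genuinely non-conjugate in $G$. Once the quandle dictionary and this non-conjugacy are in place, the three assertions are faithful transcriptions of the corresponding steps in Lemma~\ref{lem:sigma(x1)=-1}.
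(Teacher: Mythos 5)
Your proof is correct and takes essentially the same route as the paper: part (1) via \cite[Prop.\,5.5]{partII} with the prepended element $\epsilon g$ yielding the very tuple $(\epsilon^2g,\epsilon g,h)\in\supp Q_2(\epsilon g,g,h)$, and parts (2) and (3) via Proposition~\ref{pro:degrees} with $m=i=1$ and the same choices $(p_1,p_2)=(h,g)$ and $(p_1,p_2)=(g,h)$, respectively. The only difference is that you spell out the step the paper leaves implicit in (3) — that $g^2h$ and $\epsilon g^2h$ lie in distinct conjugacy classes, since $g^2$ is central and conjugation by $g$ sends $\epsilon^k h\mapsto\epsilon^{-k-1}h$, which forces $Q_2(g,g,h)=0$ — and your verification of this point is accurate.
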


\begin{proof}
	Since $g$ and $h$ do not commute, $(\ad W)(V)$ is non-zero.
	Since $(g,h)\in\supp Q_1(g,h)$, \cite[Prop.\,5.5]{partII} yields that
        $(\epsilon^2g,\epsilon g,h)\in\supp Q_2(\epsilon g,g,h)$.  To prove (3) use
	Proposition \ref{pro:degrees} with $m=i=1$, $p_1=g$ and $p_2=h$.  Similarly,
	(2) follows from Proposition \ref{pro:degrees} with $m=i=1$, $p_1=h$ and
	$p_2=g$.
\end{proof}

\subsection{Main results}

Let $G$, $V$ and $W$ as in Subsection \ref{subsection:G4:preliminaries}.
Our goal is Theorem~\ref{thm:G4} below.  

\begin{thm}
	\label{thm:G4}
	Let $V=M(h,\rho)$ and $W=M(g,\sigma)$ be absolutely simple
	Yetter-Drinfeld modules over $G$. 
	Assume that $(\id -c_{W,V}c_{V,W})(V\otimes W)\not=0$. The following
	are equivalent:
    \begin{enumerate}
        \item The Nichols algebra $\NA (V\oplus W)$ is finite-dimensional.
        \item The pair $(V,W)$ admits all reflections and $\mathcal{W}(V,W)$ is
            finite.
        \item $\rho(h)=-1$, $\sigma (g)=-1$,
            $\rho(\epsilon)=\rho(g^2)\sigma(\epsilon^{-1}h^2)$ and
            $\rho(\epsilon)^2=-1$.
    \end{enumerate}
	In this case, $\mathcal{W}(V,W)$ is standard with Cartan
	matrix of type $B_2$.  Moreover, 
	\begin{align*}
		\mathcal{H}_{\NA (V\oplus W)}(t_1,t_2)=
    (1+t_2)^4(1+t_2^2)^2(1+t_1t_2)^4(1+t_1^2t_2^2)^2q(t_1t_2^2)q(t_1),
	\end{align*}
	where
	\begin{gather*}
		q(t)=\begin{cases}
			(1+t)^2(1+t^2) & \text{if $\mathrm{char}\,\K\ne2$},\\
			(1+t)^2 & \text{if $\mathrm{char}\,\K=2$}.
		\end{cases}
	\end{gather*}
	In particular, 
	\[
		\dim \NA (V\oplus W)=\begin{cases}
			8^264^2=262144 & \text{if $\mathrm{char}\,\K\ne2$},\\
   		4^264^2=65536 & \text{if $\mathrm{char}\,\K=2$}.
	\end{cases}
	\]
\end{thm}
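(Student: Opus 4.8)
The plan is to reuse the three-step scheme of the proof of Theorem~\ref{thm:T}, substituting the $T$-specific module computations by their $\Gamma_4$-analogues (to be carried out in Section~\ref{section:G4}) together with the action tables of Remarks~\ref{rem:G4:G_on_V} and \ref{rem:G4:G_on_W}. The implication $(1)\implies(2)$ is formal and identical to the $T$ case: a finite-dimensional $\NA(V\oplus W)$ admits all reflections by \cite[Cor.\,3.18]{MR2766176}, and its Weyl groupoid is finite by \cite[Prop.\,3.23]{MR2766176}.

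For $(2)\implies(3)$ I would first invoke \cite[Thm.\,7.2(3)]{MR2734956} to see that every $(\ad V)^m(W)$ and $(\ad W)^m(V)$ is absolutely simple or zero. The hypothesis $(\id-c_{W,V}c_{V,W})(V\otimes W)\ne0$ gives $(\ad V)(W)\ne0$, and Lemma~\ref{lem:G4:rho(g)=-1}(1) gives $(\ad W)(V)\ne0$ and $(\ad W)^2(V)\ne0$, so the Cartan matrix is non-diagonal with $a^{(V,W)}_{2,1}\le-2$; since it is of finite type this already forces $a^{(V,W)}_{1,2}=-1$, hence $(\ad V)^2(W)=0$. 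Now Lemma~\ref{lem:G4:rho(g)=-1}(2),(3) applies, the first part because $(\ad V)^2(W)=0$ and the second because $(\ad W)^2(V)$ is absolutely simple and therefore supported on a single conjugacy class, yielding $\rho(h)=-1$ and $\sigma(g)=-1$. With these normalisations the action of $G$ on $V$ and $W$ is described by Remarks~\ref{rem:G4:G_on_V} and \ref{rem:G4:G_on_W}, and the $\Gamma_4$-analogues of Lemmas~\ref{lem:Y1}--\ref{lem:Y4} let me compute the chain $(\ad W)^m(V)$. Imposing that each stage be absolutely simple or zero should force $(\ad W)^3(V)=0$, so that the type is $B_2$, together with the two remaining scalar identities $\rho(\epsilon)=\rho(g^2)\sigma(\epsilon^{-1}h^2)$ and $\rho(\epsilon)^2=-1$, in direct analogy with the roles of $\rho(x_1z)\sigma(z)=1$ and $(\rho(x_1)\sigma(z))^2-\rho(x_1)\sigma(z)+1=0$ in Lemmas~\ref{lem:X2} and \ref{lem:Y2}.

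For $(3)\implies(1)$ I would assume the four identities of (3) and compute the reflections $R_1(V,W)$ and $R_2(V,W)$ explicitly, that is, the $\Gamma_4$-versions of Lemmas~\ref{lem:T:R1} and \ref{lem:T:R2}, checking that the four conditions are stable under each reflection; this shows that $(V,W)$ admits all reflections and that $\mathcal{W}(V,W)$ is standard of type $B_2$. Fixing a reduced expression of length $4$ for the longest element of the Weyl group of $B_2$, \cite[Thm.\,2.6]{MR2732989} presents $\NA(V\oplus W)$ as a braided tensor product of the Nichols algebras of the four root modules $M_{\beta_1},\dots,M_{\beta_4}$, with $M_{\beta_1}=W$ and $M_{\beta_4}=V$. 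Running the braided-isomorphism argument of Lemma~\ref{lem:T:braided_isomorphisms} with the $\Gamma_4$ data, I would identify the two modules of roots $\alpha_2$ and $\alpha_1+\alpha_2$ with $W$ (the $4$-dimensional dihedral-quandle module) and the two modules of roots $\alpha_1$ and $\alpha_1+2\alpha_2$ with $V$ (the $2$-dimensional module), all as braided vector spaces. Reading off the one-variable Hilbert series of $\NA(W)$ and $\NA(V)$ from the known single-module results and multiplying them after the appropriate degree substitutions then yields the stated bivariate Hilbert series and the dimensions $8^264^2$, respectively $4^264^2$ in characteristic $2$.

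I expect the main obstacle to be the technical computation underlying $(2)\implies(3)$ and reused in $(3)\implies(1)$, namely the explicit determination of the chain $(\ad W)^m(V)$ for $0\le m\le3$ over $\Gamma_4$, i.e.\ the analogues of Lemmas~\ref{lem:Y1}--\ref{lem:Y4}. These simultaneously force the precise scalar constraints and establish the $B_2$ shape of the root system, and they form the most calculation-heavy part of the argument. A secondary difficulty is pinning down the Nichols algebra of the $4$-dimensional module $W$ and its Hilbert series $(1+t)^4(1+t^2)^2$, which must come from a single-module analysis of the dihedral quandle rather than from the rank-two formalism.
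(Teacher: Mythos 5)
Your architecture matches the paper's proof (the same three implications, with $(1)\implies(2)$ via \cite[Cor.\,3.18]{MR2766176} and \cite[Prop.\,3.23]{MR2766176}, and $(2)\implies(3)$ resting on \cite[Thm.\,7.2(3)]{MR2734956} and Lemma~\ref{lem:G4:rho(g)=-1}), but there is a genuine gap in your $(2)\implies(3)$: the step ``since it is of finite type this already forces $a^{(V,W)}_{1,2}=-1$''. Hypothesis (2) only gives finiteness of $\mathcal{W}(V,W)$, and a finite Weyl groupoid of rank two may well have objects whose Cartan matrix is \emph{not} of finite type (both off-diagonal entries $\le -2$ occur in the Cuntz--Heckenberger classification of finite rank-two Weyl groupoids), so you cannot conclude $(\ad V)^2(W)=0$ at the given object this way. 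The paper instead invokes \cite[Prop.\,4.3]{partII} to change the object (and possibly swap $V$ and $W$) so that $(\ad V)^2(W)=0$ and $(\ad W)^4(V)=0$ hold, derives the scalar conditions there, and these are transported back to the original pair because Lemmas~\ref{lem:G4:R1} and \ref{lem:G4:R2} show the conditions in (3) are stable under both reflections. A smaller misattribution: the identity $\rho(\epsilon)=\rho(g^2)\sigma(\epsilon^{-1}h^2)$ comes from the absolute simplicity of $(\ad V)(W)$ (Lemma~\ref{lem:G4:X1}), not from the chain $(\ad W)^m(V)$; the chain yields $\rho(\epsilon^2)=-1$ (Lemma~\ref{lem:G4:Y2}), and then $(\ad W)^3(V)=0$ is a consequence (Lemma~\ref{lem:G4:Y3}) rather than an imposed condition.

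Your $(3)\implies(1)$ also contains a step that would fail: the proposed $\Gamma_4$-analogue of Lemma~\ref{lem:T:braided_isomorphisms}. If $\mathrm{char}\,\K\ne2$, the root modules of $\alpha_1$ and $\alpha_1+2\alpha_2$ are $V$ and $(\ad W)^2(V)$, with diagonal braiding matrices
\begin{equation*}
\begin{pmatrix} -1 & \rho(\epsilon)\\ \rho(\epsilon) & -1\end{pmatrix}
\quad\text{and}\quad
\begin{pmatrix} -1 & \rho(\epsilon)^{-1}\\ \rho(\epsilon)^{-1} & -1\end{pmatrix},
\end{equation*}
where $\rho(\epsilon)^{-1}=-\rho(\epsilon)$ since $\rho(\epsilon)^2=-1$. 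These are \emph{not} isomorphic as braided vector spaces: any isomorphism $\varphi$ must satisfy $(\varphi\otimes\varphi)c^2=(c')^2(\varphi\otimes\varphi)$, hence preserves the $\pm1$-eigenspaces of $c^2$ (namely $\Kspan\{v\otimes v, gv\otimes gv\}$ and the mixed span), which forces $\varphi$ to be diagonal or antidiagonal, and such maps leave the symmetric pair $(q_{12},q_{21})$ unchanged --- but $\rho(\epsilon)\ne-\rho(\epsilon)$. What is true, and all that is needed, is equality of Hilbert series (the two braidings are twist-equivalent, of Cartan type $A_2$), which the paper establishes directly in Proposition~\ref{pro:NA_small}. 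Similarly, for the $4$-dimensional factors the paper never claims $W\simeq(\ad W)(V)$ as braided vector spaces; Proposition~\ref{pro:NA_64} computes both Hilbert series $(1+t)^4(1+t^2)^2$ by restricting each module to the subgroup generated by its support --- a quotient of $\Gamma_2$ --- splitting it into two $2$-dimensional simple Yetter--Drinfeld modules, and applying \cite[Thm.\,4.6]{MR2732989}. So your anticipated ``single-module analysis of the dihedral quandle'' is in fact replaced by the rank-two formalism over $\Gamma_2$, and your tensor decomposition should be run through \cite[Cor.\,2.7(2)]{MR2732989}, which identifies the four factors as $\NA(V)$, $\NA((\ad W)(V))$, $\NA((\ad W)^2(V))$, $\NA(W)$ in $\ydG$ without any braided-isomorphism lemma.
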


We will prove Theorem \ref{thm:G4} in Section \ref{section:G4}. 

\section{Proof of Theorem \ref{thm:G4}}
\label{section:G4}

As in Subsection \ref{subsection:G4:preliminaries} let $V=M(h,\rho)$ and
$W=M(g,\sigma)$.
We assume that $\deg \rho =\deg \sigma =1$ and that $\sigma(g)=\rho(h)=-1$.  As usual,
$X_n=X_n^{V,W}$ and $\varphi_n=\varphi_n^{V,W}$ if no confusion can arise.  

\begin{lem}
	\label{lem:G4:X1}
	Assume that $\sigma(g)=\rho(h)=-1$. Then 
	$X_1^{V,W}$ is absolutely simple if and only if
    $\rho(\epsilon)=\rho(g^2)\sigma(\epsilon^{-1}h^2)$. In this case, $X_1^{V,W}\simeq
	M(hg,\sigma_1)$, where $\sigma_1$ is the irreducible character of the
	centralizer $G^{hg}=\langle\epsilon^2,\epsilon^{-1}h^2,hg\rangle$ given by
	\begin{gather*}
		\sigma_1(hg)=-1,\quad
		\sigma_1(\epsilon^2)=\rho(\epsilon^2)\sigma(\epsilon^2),\quad
		\sigma_1(\epsilon^{-1}h^2)=\sigma(\epsilon^{-1}h^2)\rho(\epsilon^{-1}h^2).
	\end{gather*}
	Let $w'\coloneqq\varphi_1(v\otimes w)$. Then $w'\in(V\otimes W)_{hg}$ is
	non-zero. Moreover, the set $\{w',hw',\epsilon w',\epsilon hw'\}$ is a
	basis of $X_1^{V,W}$. The degrees of these basis vectors are $hg$, $\epsilon hg$,
	$\epsilon^2 hg$ and $\epsilon^3 hg$, respectively.
\end{lem}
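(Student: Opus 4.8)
The plan is to mimic Lemma~\ref{lem:X1}, the only structural difference being that here $V=M(h,\rho)$ is two-dimensional because $h$ is not central. Since $\varphi_0=0$ we have $\varphi_1=\id-c_{W,V}c_{V,W}$, so the first task is to compute the generator $w'=\varphi_1(v\otimes w)$. As $\rho(h)=\sigma(g)=-1$, every element of $\supp V$ (resp.\ $\supp W$) acts on its own homogeneous component by $-1$, so Lemma~\ref{lem:c^2} applies with $q_V=q_W=-1$; alternatively one reads the action off directly from Remarks~\ref{rem:G4:G_on_V} and \ref{rem:G4:G_on_W}. Either way $c_{W,V}c_{V,W}(v\otimes w)=\rho(\epsilon)^{-1}\,gv\otimes hw$, whence
\[
w'=v\otimes w-\rho(\epsilon)^{-1}\,gv\otimes hw.
\]
The two summands are linearly independent, and both have $G$-degree $hg$ (using $\deg(gv)=\epsilon^{-1}h$, $\deg(hw)=\epsilon g$ and $h\epsilon=\epsilon h$), so $w'\in(V\otimes W)_{hg}$ is non-zero and, by \cite[Lemma 1.7]{MR2732989}, $X_1^{V,W}=\K G\,w'$.

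Next I would settle absolute simplicity. Since $\supp X_1^{V,W}=(hg)^G$ is a single conjugacy class and $G^{hg}=\langle\epsilon^2,\epsilon^{-1}h^2,hg\rangle$ is abelian, $X_1^{V,W}$ is absolutely simple if and only if its degree-$hg$ component is one-dimensional. Now $(V\otimes W)_{hg}=\Kspan\{v\otimes w,\,gv\otimes hw\}$ is two-dimensional and contains $(X_1^{V,W})_{hg}=\K G^{hg}\,w'$, so everything hinges on the action of $hg$ on $w'$. Computing $hg\cdot w'$ from the two tables, and using $\sigma(\epsilon^2)^2=1$ (which holds because $\epsilon^2\in Z(G)$ and $\epsilon^4=1$), gives
\[
hg\cdot w'=-\rho(\epsilon)^{-1}\rho(g^2)\sigma(\epsilon^{-1}h^2)\,v\otimes w+\rho(\epsilon)^{-1}\,gv\otimes hw.
\]
This is a scalar multiple of $w'$ precisely when $\rho(\epsilon)=\rho(g^2)\sigma(\epsilon^{-1}h^2)$, in which case $hg\cdot w'=-w'$; if the identity fails, then $w'$ and $hg\cdot w'$ are independent and span the whole two-dimensional space $(V\otimes W)_{hg}$, so $X_1^{V,W}$ is not simple. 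This proves the ``if and only if''.

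In the simple case I would read off $\sigma_1$ by evaluating the generators of $G^{hg}$ on $w'$: the central elements $\epsilon^2$ and $\epsilon^{-1}h^2$ act on $V\otimes W$ by the scalars $\rho(\epsilon^2)\sigma(\epsilon^2)$ and $\rho(\epsilon^{-1}h^2)\sigma(\epsilon^{-1}h^2)$, while $hg$ acts by $-1$, giving exactly the three stated values; hence $X_1^{V,W}\simeq M(hg,\sigma_1)$. For the basis, note that an absolutely simple module with abelian centralizer has one-dimensional homogeneous components, so $\dim X_1^{V,W}=|(hg)^G|=4$ with $(hg)^G=\{hg,\epsilon hg,\epsilon^2 hg,\epsilon^3 hg\}$; since $\epsilon$ has order $4$ these four degrees are distinct, the non-zero vectors $w',hw',\epsilon w',\epsilon hw'$ lie one in each of them (using $hgh^{-1}=\epsilon g$ and $h\epsilon=\epsilon h$ for the degree bookkeeping), and therefore form a basis. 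I expect the one genuinely delicate point to be the computation of $hg\cdot w'$: the coefficient of $v\otimes w$ comes out clean only after the simplification $\sigma(\epsilon^2)^2=1$ is applied, and keeping the signs and the inverse $\rho(\epsilon)^{-1}$ in their correct places is where the bookkeeping is most error-prone.
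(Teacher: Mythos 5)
Your proposal is correct and follows essentially the same route as the paper's proof: compute $w'=\varphi_1(v\otimes w)=v\otimes w-\rho(\epsilon)^{-1}gv\otimes hw$, use that $G^{hg}$ is abelian to reduce absolute simplicity to $hg\,w'\in\K w'$ (your explicit formula for $hg\cdot w'$, including the simplification $\sigma(\epsilon^2)^2=\sigma(\epsilon^4)=1$, checks out and yields exactly the condition $\rho(\epsilon)=\rho(g^2)\sigma(\epsilon^{-1}h^2)$ with $hgw'=-w'$), then read off $\sigma_1$ from the centrality of $\epsilon^2$ and $\epsilon^{-1}h^2$. The only nitpick is the phrase ``so $X_1^{V,W}$ is not simple'' at the end of the equivalence argument, which should say ``not absolutely simple'' (over a non-closed field a two-dimensional degree component need not preclude simplicity), but since your stated criterion was the correct one this is a wording slip, not a gap.
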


\begin{proof}
    First notice that $X_1^{V,W}=\varphi_1(V\otimes W)=\K G\varphi_1(v\otimes w)$. A
    direct calculation using Remarks \ref{rem:G4:G_on_V} and
    \ref{rem:G4:G_on_W} yields
    \begin{align*}
			w'=\varphi_1(v\otimes w)=(\id-c_{W,V}c_{V,W})(v\otimes w)=v\otimes w-\rho(\epsilon)^{-1}gv\otimes hw.
    \end{align*}
    Hence $w'\in(V\otimes W)_{hg}$ is non-zero. Since $G^{hg}=hgZ(G)$ is abelian,
    we conclude that $X_1^{V,W}$ is
    absolutely simple if and only if $hg w'\in \K w'$.  This is equivalent to
    $\rho(\epsilon)=\rho(g^2)\sigma(\epsilon^{-1}h^2)$, and then $hgw'=-w'$.
    The remaining claims on $\sigma _1$ follow from $\epsilon^2,\epsilon^{-1}h^2\in Z(G)$.
\end{proof}

\begin{rem}
	\label{rem:G4:G_on_X1}
	Assume that $\sigma(g)=\rho(h)=-1$ and
	$\rho(\epsilon)=\rho(g^2)\sigma(\epsilon^{-1}h^2)$. Then 
	action of $G$ on $X_1$ is given by the following table:
	\begin{center}
		\begin{tabular}{c|cccc}
			$X_1^{V,W}$ & $w'$ & $hw'$ & $\epsilon w'$ & $\epsilon hw'$ \tabularnewline
			\hline
			$\epsilon$ & $\epsilon w'$ & $\epsilon hw'$ & $\sigma_1(\epsilon^2)w'$ & $\sigma_1(\epsilon^2)hw'$ \tabularnewline
			$h$ & $hw'$ & $\sigma_1(\epsilon^{-1}h^2)\epsilon w'$ & $\epsilon hw'$ & $\sigma_1(\epsilon h^2)w'$ \tabularnewline
			$g$ & $-\sigma_1(\epsilon^{-1}h^{-2})\epsilon hw'$ & $-\sigma_1(\epsilon^2)\epsilon w'$ & $-\sigma_1(\epsilon^{-1}h^{-2})hw'$  & $-\sigma_1(\epsilon^2)w'$ \tabularnewline
		\end{tabular}
	\end{center}
\end{rem}

\begin{lem}
	\label{lem:G4:X2}
	Assume that $\sigma(g)=\rho(h)=-1$ and
	$\rho(\epsilon)=\rho(g^2)\sigma(\epsilon^{-1}h^2)$. Then $X_2^{V,W}=0$.
\end{lem}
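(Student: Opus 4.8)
The plan is to reduce the statement to the vanishing of a single vector and then verify this by expanding the three summands of $\varphi_2$. Under the hypotheses $\sigma(g)=\rho(h)=-1$ and $\rho(\epsilon)=\rho(g^2)\sigma(\epsilon^{-1}h^2)$, Lemma~\ref{lem:G4:X1} gives that $X_1^{V,W}\simeq M(hg,\sigma_1)$ is absolutely simple with lowest-degree component $(X_1^{V,W})_{hg}=\K w'$, where $w'=\varphi_1(v\otimes w)=v\otimes w-\rho(\epsilon)^{-1}gv\otimes hw$. Since $\deg\rho=1$, the component $V_h=\K v$ is one-dimensional and generates $V$ as a Yetter--Drinfeld module, so by \cite[Lemma~1.7]{MR2732989} we have $X_2^{V,W}=\K G\,\varphi_2(v\otimes w')$. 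Thus it suffices to prove $\varphi_2(v\otimes w')=0$.

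Next I would expand $\varphi_2(v\otimes w')$ using the recursion of Lemma~\ref{lem:X_n}, which on $V\otimes X_1^{V,W}\subseteq V\otimes V\otimes W$ reads $\varphi_2=\id-c_{X_1,V}c_{V,X_1}+(\id\otimes\varphi_1)c_{1,2}$. The first summand is just $v\otimes w'$. For the middle summand, since $v\in V_h$ Remarks~\ref{rem:G4:G_on_V} and~\ref{rem:G4:G_on_W} give $c_{V,X_1}(v\otimes w')=hw'\otimes v$ with $hw'\in(X_1^{V,W})_{\epsilon hg}$, and then $c_{X_1,V}(hw'\otimes v)=(\epsilon hg\cdot v)\otimes hw'=-\rho(\epsilon)^{-2}gv\otimes hw'$. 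For the last summand I would first braid the two $V$-factors, using $c_{V,V}(v\otimes v)=-v\otimes v$ and $c_{V,V}(v\otimes gv)=-\rho(\epsilon)^{-1}gv\otimes v$, and then apply $\varphi_1$ to the last two factors; this forces me to compute the non-generating evaluation $\varphi_1(v\otimes hw)=v\otimes hw-\sigma(\epsilon^{-1}h^2)\rho(\epsilon)^{-2}gv\otimes\epsilon w$, which follows from $\varphi_1=\id-c_{W,V}c_{V,W}$ together with the same two remarks.

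Finally I would collect terms in the basis of $V\otimes V\otimes W$. Expanding $w'$ and $hw'$, each summand is supported on the four monomials $v\otimes v\otimes w$, $v\otimes gv\otimes hw$, $gv\otimes v\otimes hw$ and $gv\otimes gv\otimes\epsilon w$: the copy of $v\otimes w'$ in the first summand is cancelled exactly by the $-v\otimes w'$ produced by $(\id\otimes\varphi_1)c_{1,2}$, while the two remaining monomials $gv\otimes v\otimes hw$ and $gv\otimes gv\otimes\epsilon w$ receive opposite contributions from the middle summand and from the surviving part of $(\id\otimes\varphi_1)c_{1,2}$, and their coefficients cancel identically. Hence $\varphi_2(v\otimes w')=0$ and $X_2^{V,W}=0$. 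I expect the main obstacle to be purely bookkeeping: one must track the braidings and, in particular, the auxiliary value $\varphi_1(v\otimes hw)$ carefully enough to see the exact cancellation. It is worth noting that the relation $\rho(\epsilon)=\rho(g^2)\sigma(\epsilon^{-1}h^2)$ is used only to guarantee, via Lemma~\ref{lem:G4:X1}, that $X_1^{V,W}$ is simple with the single generator $w'$; the cancellation itself is visible at the level of these explicit coefficients.
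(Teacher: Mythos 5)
Your proof is correct and takes essentially the same route as the paper: both arguments reduce the lemma to showing $\varphi_2(v\otimes w')=0$ and then expand $\varphi_2=\id-c_{X_1,V}c_{V,X_1}+(\id\otimes\varphi_1)c_{1,2}$ until the three summands cancel, the only cosmetic differences being that the paper gets $\varphi_1(v\otimes hw)=-hw'$ from $G$-equivariance of $\varphi_1$ instead of your direct braiding computation, and justifies the single-generator reduction by the orbit identity $G\triangleright(h,hg)=h^G\times(hg)^G$ rather than by cyclicity of $V$ alone (which by itself gives only $X_2^{V,W}=\K G\,\varphi_2(v\otimes X_1^{V,W})$; in this case the orbit fact closes that small step). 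Your closing observation is accurate and worth keeping: the hypothesis $\rho(\epsilon)=\rho(g^2)\sigma(\epsilon^{-1}h^2)$ enters only through the structure of $X_1^{V,W}$ — without it $(X_1^{V,W})_{hg}$ is two-dimensional and the one-generator reduction fails — while the cancellation itself holds identically in the coefficients.
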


\begin{proof}
	Since $G\triangleright (h,hg)=h^G\times(hg)^G$, where $\triangleright$
	denotes the diagonal action, we conclude that $X_2^{V,W}=\K G\varphi_2(v\otimes w')$.
	Thus it is enough to prove that $\varphi_2(v\otimes w')=0$. We compute:
	\begin{align*}
		\varphi_2(v\otimes w')=v\otimes w'+\rho(\epsilon)^2gv\otimes hw'&+hv\otimes\varphi_1(v\otimes w)\\
		&-\rho(\epsilon)^{-1}hgv\otimes\varphi_1(v\otimes hw).
	\end{align*}
	Since $\varphi_1$ is a $G$-module map, by acting with $h$ on
	$w'=\varphi_1(v\otimes w)$ we obtain that $\varphi_1(v\otimes hw)=-hw'$.
	Since $hgv=-\rho(\epsilon)^{-1}gv$, the claim follows.
\end{proof}

\begin{rem} \label{rem:G4:X1VW=X1WV}
	The braiding $c_{V,W}$ induces an isomorphism of Yetter-Drinfeld modules
	over $G$ between $X_1^{V,W}$ and $X_1^{W,V}$. The action of $G$ on
	$X_1^{W,V}$ can be obtained from the action of $G$ on $X_1^{V,W}$ in Remark
	\ref{rem:G4:G_on_X1}.
	The element $v'\coloneqq c_{V,W}(w')\in(W\otimes V)_{hg}$ is non-zero. Moreover,
	\begin{equation}
		\label{eq:G4:v'}
		v'=hw\otimes v-\rho(\epsilon)^{-1}\sigma(\epsilon^{-1}h^2)w\otimes gv=\varphi_1(hw\otimes v).
	\end{equation}
\end{rem}

To compute $X_2^{W,V}$ we need the following lemma.

\begin{lem}
	Assume that $\sigma(g)=\rho(h)=-1$ and
	$\rho(\epsilon)=\rho(g^2)\sigma(\epsilon^{-1}h^2)$. 
	Then the following hold:
	\begin{align}
          \label{eq:G4:phi1(ew,v)}
          \varphi_1(\epsilon w\otimes v)=&\;-\sigma(\epsilon h^{-2})hv',\\
          \label{eq:G4:phi1(ew,gv)}
          \varphi_1(\epsilon w\otimes gv)=&\;
            -\sigma(\epsilon h^{-2})\rho(\epsilon^2)\epsilon v',\\
          \label{eq:G4:phi1(w,v)}
          \varphi_1(w\otimes v)=&\;-\sigma(\epsilon^{-1}h^{-2})
            \rho(\epsilon)^{-1}\epsilon hv',\\
          \label{eq:G4:phi1(w,gv)}
          \varphi_1(w\otimes gv)=&\;
            -\rho(\epsilon)\sigma(\epsilon h^{-2})v'.
	\end{align}
\end{lem}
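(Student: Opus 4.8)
The plan is to leverage that $\varphi_1$ is a morphism in $\ydG$, hence $\K G$-linear, together with the identity $v'=\varphi_1(hw\otimes v)$ recorded in Remark~\ref{rem:G4:X1VW=X1WV}. Since $\varphi_1$ commutes with the action of $G$ (the same device used in the proof of Lemma~\ref{lem:G4:X2}), for every $t\in G$ we have $\varphi_1\bigl(t\cdot(hw\otimes v)\bigr)=t\cdot v'$. Each of the four elements $\epsilon w\otimes v$, $w\otimes v$, $\epsilon w\otimes gv$ and $w\otimes gv$ is, up to a nonzero scalar, of the form $t\cdot(hw\otimes v)$ with $t=h$, $t=\epsilon h$, $t=gh$ and $t=g\epsilon h$, respectively. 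So I would derive each identity by computing $t\cdot(hw\otimes v)$ from the action tables on the left, and $t\cdot v'$ from the structure of $X_1^{W,V}$ on the right.

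I would first dispose of \eqref{eq:G4:phi1(ew,v)} and \eqref{eq:G4:phi1(w,v)}, whose right-hand sides involve only $hv'$ and $\epsilon hv'$ and so need no rewriting. Using Remarks~\ref{rem:G4:G_on_V} and~\ref{rem:G4:G_on_W} one gets
\[
  h\cdot(hw\otimes v)=-\sigma(\epsilon^{-1}h^2)\,\epsilon w\otimes v,\qquad
  (\epsilon h)\cdot(hw\otimes v)=-\rho(\epsilon)\sigma(\epsilon h^2)\,w\otimes v,
\]
the second following from the first via $\epsilon\cdot\epsilon w=\sigma(\epsilon^2)w$ and $\epsilon\cdot v=\rho(\epsilon)v$, after merging the $\sigma$-values using that $\sigma$ is a character of the abelian group $G^g$. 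Applying $\varphi_1$ and dividing out the scalars yields \eqref{eq:G4:phi1(ew,v)} and \eqref{eq:G4:phi1(w,v)}, where I use $\sigma(\epsilon^{-1}h^2)^{-1}=\sigma(\epsilon h^{-2})$ and $\sigma(\epsilon h^2)^{-1}=\sigma(\epsilon^{-1}h^{-2})$.

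For \eqref{eq:G4:phi1(ew,gv)} and \eqref{eq:G4:phi1(w,gv)} I would act further with $g$, using $g\cdot\epsilon w=-\sigma(\epsilon^2)\epsilon w$, $g\cdot w=-w$ and $g\cdot v=gv$, to obtain
\[
  (gh)\cdot(hw\otimes v)=\sigma(\epsilon h^2)\,\epsilon w\otimes gv,\qquad
  (g\epsilon h)\cdot(hw\otimes v)=\rho(\epsilon)\sigma(\epsilon h^2)\,w\otimes gv.
\]
The corresponding right-hand sides $(gh)\cdot v'$ and $(g\epsilon h)\cdot v'$ I would evaluate by rewriting $t$ through the relations $hg=\epsilon gh$, $g\epsilon=\epsilon^{-1}g$, $h\epsilon=\epsilon h$ as $gh=\epsilon^{-1}(hg)$ and $g\epsilon h=\epsilon^{-2}(hg)$, and then invoking $X_1^{W,V}\simeq M(hg,\sigma_1)$ from Lemma~\ref{lem:G4:X1}: the element $hg$ acts on $v'$ by $\sigma_1(hg)=-1$, while $\epsilon^2\in Z(G)$ acts by the scalar $\sigma_1(\epsilon^2)=\rho(\epsilon^2)\sigma(\epsilon^2)$ and $\epsilon^4=1$. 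This gives $(gh)\cdot v'=-\rho(\epsilon^2)\sigma(\epsilon^2)\,\epsilon v'$ and $(g\epsilon h)\cdot v'=-\rho(\epsilon^2)\sigma(\epsilon^2)\,v'$, and substituting produces \eqref{eq:G4:phi1(ew,gv)} and \eqref{eq:G4:phi1(w,gv)} after the collapses $\rho(\epsilon)^{-1}\rho(\epsilon^2)=\rho(\epsilon)$ and $\sigma(\epsilon^2)\sigma(\epsilon h^2)^{-1}=\sigma(\epsilon h^{-2})$.

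The main obstacle is exactly this last evaluation: the target formulas are stated in terms of $\epsilon v'$ and $v'$ rather than $(gh)\cdot v'$ and $(g\epsilon h)\cdot v'$, so one must move the factor $g$ past $v'$ using the group relations and the fact that $v'$ spans the $hg$-homogeneous line of the simple module $M(hg,\sigma_1)$, on which $hg$ and the central element $\epsilon^2$ act by explicit scalars. Once $gh$ and $g\epsilon h$ are reduced to $\epsilon$-powers times $hg$, everything collapses to scalars and the identities follow. By contrast, the two $v$-column identities \eqref{eq:G4:phi1(ew,v)} and \eqref{eq:G4:phi1(w,v)} are immediate by comparison, since there $t\cdot v'$ already appears in reduced form.
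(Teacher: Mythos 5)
Your proposal is correct and follows essentially the same route as the paper: both exploit the $G$-equivariance of $\varphi_1$ applied to the base identity $v'=\varphi_1(hw\otimes v)$ from Remark~\ref{rem:G4:X1VW=X1WV}, the paper acting successively with $h$, $g$, $\epsilon$, $g$ while you act with the composite elements $h$, $\epsilon h$, $gh$, $g\epsilon h$ and reduce the right-hand sides via the $M(hg,\sigma_1)$-structure. I checked your scalar computations (including the uses of $hg=\epsilon gh$, $g\epsilon=\epsilon^{-1}g$, $\epsilon^4=1$, and $\sigma_1(\epsilon^2)=\rho(\epsilon^2)\sigma(\epsilon^2)$) and they all agree with the action tables in Remarks~\ref{rem:G4:G_on_V}, \ref{rem:G4:G_on_W} and \ref{rem:G4:G_on_X1}.
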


\begin{proof}
	Since $v'=\varphi_1(hw\otimes v)$, acting on this element with $h$ we obtain
	Equation \eqref{eq:G4:phi1(ew,v)}.  Acting on \eqref{eq:G4:phi1(ew,v)} with
	$g$ we obtain Equation \eqref{eq:G4:phi1(ew,gv)}. To prove Equation
	\eqref{eq:G4:phi1(w,v)} act with $\epsilon$ on Equation
	\eqref{eq:G4:phi1(ew,v)}.  Finally, to prove Equation
	\eqref{eq:G4:phi1(w,gv)} act with $g$ on Equation \eqref{eq:G4:phi1(w,v)}.
\end{proof}

\begin{lem}
	\label{lem:G4:Y2}
	Assume that $\sigma(g)=\rho(h)=-1$ and
	$\rho(\epsilon)=\rho(g^2)\sigma(\epsilon^{-1}h^2)$.  Then $X_2^{W,V}$ is
	absolutely simple if and only if $\rho(\epsilon^2)=-1$. In this case,
	$X_2^{W,V}\simeq M(\epsilon hg^2,\rho_2)$, where $\rho_2$ is the irreducible
	character of $G^{\epsilon hg^2}=G^h$ given by
	\begin{align*}
		\rho_2(h)=\rho (\epsilon g^{-2}),&&
		\rho_2(\epsilon^{-1}h^2)=\rho (\epsilon g^{-4}),&&
		\rho_2(g^2)=\rho(g^2).
	\end{align*}
	Moreover, $\rho_2(\epsilon hg^2)=-1$ and the set
	$\{v''\coloneqq\varphi_2(\epsilon w\otimes v'),gv''\}$ is a basis of
	$X_2^{W,V}$. The degrees of these basis vectors are $\epsilon hg^2$ and
	$\epsilon^2 hg^2$, respectively. 
\end{lem}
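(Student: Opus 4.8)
The plan is to mirror the proof of Lemma~\ref{lem:Y2}: produce the distinguished generator $v''$ by an explicit computation, reduce absolute simplicity to the one-dimensionality of a single homogeneous component, and extract the numerical condition from the action of a single group element.

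First I would compute $v''=\varphi_2(\epsilon w\otimes v')$ in an explicit basis of $W\otimes W\otimes V$. Writing $\varphi_2=\id-c_{X_1,W}c_{W,X_1}+(\id\otimes\varphi_1)c_{1,2}$ as in Lemma~\ref{lem:X_n}, expanding $v'$ by \eqref{eq:G4:v'}, and using the formulas \eqref{eq:G4:phi1(ew,v)}--\eqref{eq:G4:phi1(w,gv)} for $\varphi_1$ together with the $G$-actions recorded in Remarks~\ref{rem:G4:G_on_W}, \ref{rem:G4:G_on_X1} and \ref{rem:G4:X1VW=X1WV}, this is a direct calculation. A degree count, using $g^2,\epsilon^{-1}h^2\in Z(G)$ and $hg=\epsilon gh$, shows that $v''\in(W\otimes W\otimes V)_{\epsilon hg^2}$, and the resulting expression shows $v''\ne0$.

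Next I would set up the absolute-simplicity criterion. Since $g^2$ is central, $G^{\epsilon hg^2}=G^{\epsilon h}=G^h=\langle h\rangle Z(G)$ is abelian, and the conjugacy class $(\epsilon hg^2)^G=\{\epsilon hg^2,\epsilon^2 hg^2\}$ has exactly two elements. Hence, once one checks via \cite[Lemma 1.7]{MR2732989} that $v''$ generates $X_2^{W,V}$ over $\K G$, absolute simplicity of $X_2^{W,V}$ is equivalent to $\dim(X_2^{W,V})_{\epsilon hg^2}=1$, i.e. to $v''$ being a common eigenvector of $G^{\epsilon hg^2}$. Because every central element acts by a scalar on $W\otimes W\otimes V$ and $G^{\epsilon hg^2}=\langle h\rangle Z(G)$, this reduces to the single requirement that $hv''\in\K v''$.

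Finally I would compute $hv''$. By $G$-linearity of $\varphi_2$ one has $hv''=\varphi_2(\epsilon hw\otimes hv')$, and expanding this in the basis of $W\otimes W\otimes V$ (or simply applying $h$ to the expression for $v''$ via Remark~\ref{rem:G4:G_on_W}) produces $hv''$ as a combination of $v''$ and a second basis vector. As in Lemma~\ref{lem:Y2}, comparing coefficients shows that the off-diagonal term vanishes precisely when $\rho(\epsilon^2)=-1$; conversely, when $\rho(\epsilon^2)=-1$ the action of $h$ becomes scalar, so the component is one-dimensional and $X_2^{W,V}$ is absolutely simple. In that case the eigenvalues give $\rho_2(h)=\rho(\epsilon g^{-2})$, and together with $\rho_2(\epsilon^{-1}h^2)=\rho(\epsilon g^{-4})$ and $\rho_2(g^2)=\rho(g^2)$, read off from the scalar action of the central elements using the standing hypothesis $\rho(\epsilon)=\rho(g^2)\sigma(\epsilon^{-1}h^2)$ and $\sigma(g)=\rho(h)=-1$, one checks $\rho_2(\epsilon hg^2)=\rho(\epsilon)^2=\rho(\epsilon^2)=-1$. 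Since $\deg v''=\epsilon hg^2$ and $g(\epsilon hg^2)g^{-1}=\epsilon^2 hg^2$, the set $\{v'',gv''\}$ is a basis of $X_2^{W,V}\simeq M(\epsilon hg^2,\rho_2)$ with the stated degrees. The main obstacle is the bookkeeping in computing $v''$ and $hv''$ explicitly, and, more conceptually, verifying through \cite[Lemma 1.7]{MR2732989} that $v''$ alone generates $X_2^{W,V}$, so that the single homogeneous component $(X_2^{W,V})_{\epsilon hg^2}$ governs absolute simplicity.
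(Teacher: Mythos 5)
Your route is the paper's own: compute $v''=\varphi_2(\epsilon w\otimes v')$ explicitly from \eqref{eq:G4:v'} and the $\varphi_1$-formulas, use that $G^{\epsilon hg^2}=G^h=\langle h\rangle Z(G)$ is abelian and that central elements act by scalars on $W\otimes W\otimes V$, reduce absolute simplicity to the single condition $hv''\in\K v''$, and read off $\rho_2$ and the basis $\{v'',gv''\}$; your verifications of $\rho_2(h)=\rho(\epsilon g^{-2})$ and $\rho_2(\epsilon hg^2)=\rho(\epsilon^2)=-1$ are correct. The genuine gap is the step you deferred to \cite[Lemma 1.7]{MR2732989}: that lemma does \emph{not} yield that $v''$ alone generates $X_2^{W,V}$. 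Since the diagonal $G$-action on $g^G\times(hg)^G$ has two orbits, with representatives $(g,hg)$ and $(\epsilon^2g,hg)$, it yields $X_2^{W,V}=\K G\{\varphi_2(w\otimes v'),\varphi_2(\epsilon w\otimes v')\}$, and the entire first half of the paper's proof consists of showing that the first generator vanishes: from $gv'=-\sigma_1(\epsilon^{-1}h^{-2})\epsilon hv'$ one computes $c_{X_1,W}c_{W,X_1}(w\otimes v')=\rho(\epsilon^2g^2)\,\epsilon hw\otimes\epsilon hv'$, and this cancels exactly against the terms produced by $(\id\otimes\varphi_1)c_{1,2}(w\otimes v')$ via \eqref{eq:G4:phi1(w,v)} and \eqref{eq:G4:phi1(w,gv)}, giving $\varphi_2(w\otimes v')=0$. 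Your proposal never names this element or its vanishing.

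This is not dispensable bookkeeping: it carries the whole ``if'' direction. The element $w\otimes v'$ is homogeneous of degree $ghg=\epsilon^{-1}hg^2=\epsilon^3hg^2$, and since $\epsilon^4=1$ and $\epsilon^2\ne1$, this degree lies outside $(\epsilon hg^2)^G=\{\epsilon hg^2,\epsilon^2hg^2\}$. So if $\varphi_2(w\otimes v')$ were nonzero, $\supp X_2^{W,V}$ would contain a second conjugacy class and $X_2^{W,V}$ could not be simple, let alone isomorphic to $M(\epsilon hg^2,\rho_2)$, no matter how $h$ acts on $v''$. Your ``only if'' direction is fine as stated (absolute simplicity forces $\dim(X_2^{W,V})_{\epsilon hg^2}=1$, hence $hv''\in\K v''$, hence $\rho(\epsilon^2)=-1$ by coefficient comparison), but the converse implication, and with it the identification $X_2^{W,V}\simeq M(\epsilon hg^2,\rho_2)$ with basis $\{v'',gv''\}$, remains unproved until the cancellation $\varphi_2(w\otimes v')=0$ is carried out --- a concrete computation, not a consequence of \cite[Lemma 1.7]{MR2732989}.
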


\begin{proof}
	Since $G\triangleright (g,hg)\cup G\triangleright(\epsilon^2g,hg)=g^G\times
	(hg)^G$, we conclude that 
	\[
		X_2^{W,V}=\K G\{\varphi_2(w\otimes v'),\varphi_2(\epsilon w\otimes v')\}.
	\]
	We first prove that $\varphi_2(w\otimes v')=0$. 
	First, one obtains from
	  Remarks~\ref{rem:G4:X1VW=X1WV} and \ref{rem:G4:G_on_X1},
		that $gv'=-\sigma_1(\epsilon^{-1}h^{-2})\epsilon hv'$. Moreover,
	\begin{align*}
		c_{X_1,W}c_{W,X_1}(w\otimes v')&=c_{X_1,W}(gv'\otimes w)=-\sigma(\epsilon^2)\epsilon hw\otimes gv'\\
		&=\sigma(\epsilon^2)\sigma_1(\epsilon^{-1}h^{-2})\epsilon hw\otimes \epsilon hv'\\
		&=\rho (\epsilon ^2g^2)\epsilon	hw\otimes \epsilon hv'
	\end{align*}
	by Lemma~\ref{lem:G4:X1}. Therefore
	\begin{align*}
          \varphi_2(w\otimes v')=&\;w\otimes v'
          -\rho(\epsilon^2g^2)\epsilon hw\otimes\epsilon hv'\\
		&\;+ghw\otimes\varphi_1(w\otimes v)
		-\rho(\epsilon)^{-1}\sigma(\epsilon^{-1}h^2)gw\otimes\varphi_1(w\otimes gv).
	\end{align*}
	Using Equations \eqref{eq:G4:phi1(w,v)} and \eqref{eq:G4:phi1(w,gv)} we
	conclude that
	\begin{align*}
		\varphi_2(w\otimes v')=&\;w\otimes v'
		-\rho(\epsilon^2g^2)\epsilon hw\otimes\epsilon hv'\\
		&+\rho(\epsilon)^{-1}\sigma(\epsilon h^{-2})\epsilon hw\otimes \epsilon hv'
		-w\otimes v',
	\end{align*}
	and hence $\varphi_2(w\otimes v')=0$. 
	
	Now we use Equation
	\eqref{eq:G4:v'} to compute:
	\begin{align*}
		\varphi_2(\epsilon w\otimes v')=&\;\epsilon w\otimes v'-c_{X_2,W}c_{W,X_2}(\epsilon w\otimes v')\\
		&\;-\epsilon hw\otimes\varphi_1(\epsilon w\otimes v)
                +\rho(\epsilon)^{-1}\sigma(\epsilon h^2)w\otimes\varphi_1(\epsilon w\otimes gv).
	\end{align*}
	Using Lemma \ref{lem:c^2} and Equations \eqref{eq:G4:phi1(ew,v)} and
	\eqref{eq:G4:phi1(ew,gv)} we conclude that
	\begin{equation}        
		\label{eq:G4:v''}
		\begin{aligned}
			v''=\epsilon w\otimes v'&-\sigma (\epsilon ^2)hw\otimes \rho (g^2)\epsilon hv'\\
			&+\sigma(\epsilon h^{-2})\epsilon hw\otimes hv'-\sigma(\epsilon^2)\rho(\epsilon) w\otimes\epsilon v'
		\end{aligned}
	\end{equation}
	belongs to $(W\otimes W\otimes V)_{\epsilon hg^2}$ and it is non-zero. Since
        $G^{\epsilon hg^2}=G^h=hZ(G)$, the
	module $X_2^{W,V}$ is absolutely simple if and only if $hv''\in \K v''$.
	This is equivalent to $\rho(\epsilon^2)=-1$. Then $hv''=\sigma(\epsilon^{-1}h^2)v''=\rho (\epsilon g^{-2})v''$. 
\end{proof}

\begin{rem}
	\label{rem:G4:G_on_Y2}
	Assume that $\sigma(g)=\rho(h)=-1$,
	$\rho(\epsilon)=\rho(g^2)\sigma(\epsilon^{-1}h^2)$,
        and $\rho (\epsilon ^2)=-1$.
	Then the action of $G$
	on $X_2^{W,V}$ is given by:
	\begin{center}
		\begin{tabular}{c|cc}
			$X_2^{W,V}$ & $v''$ & $gv''$ \tabularnewline
			\hline
			$\epsilon$ & $\rho(\epsilon)v''$ & $\rho(\epsilon^{-1})gv''$ \tabularnewline
			$h$ & $\rho (\epsilon g^{-2})v''$ & $\rho(g^{-2})gv''$ \tabularnewline
			$g$ & $gv''$ & $\rho(g^2)v''$ \tabularnewline
		\end{tabular}
	\end{center}
\end{rem}

\begin{lem}
	\label{lem:G4:Y3_auxiliar}
	Assume that $\sigma(g)=\rho(h)=-1$,
	$\rho(\epsilon)=\rho(g^2)\sigma(\epsilon^{-1}h^2)$, and $\rho (\epsilon ^2)=-1$.
	Then the following hold:
	\begin{align}
		\label{eq:G4:phi2(w,v')}\varphi_2(w\otimes v')=&\;0,\\
		\label{eq:G4:phi2(w,ehv')}\varphi_2(w\otimes\epsilon hv')=&\;0,\\
		\label{eq:G4:phi2(w,ev')}\varphi_2(w\otimes\epsilon v')=&\;
    \rho (\epsilon )\sigma(\epsilon^2)v'',\\
		\label{eq:G4:phi2(w,hv')}\varphi_2(w\otimes hv')=&\;
    \rho (\epsilon ^{-1}g^{-2})gv''.
	\end{align}
\end{lem}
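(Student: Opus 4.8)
The essential tool is that every $\varphi_m$ is a morphism of Yetter-Drinfeld modules, hence commutes with the $G$-action: $x\,\varphi_2(a\otimes b)=\varphi_2(xa\otimes xb)$ for all $x\in G$. The plan is to derive each of the four identities from a single relation already in hand by acting with one carefully chosen group element and then reading off the action tables of Remarks~\ref{rem:G4:G_on_W}, \ref{rem:G4:G_on_X1}, and \ref{rem:G4:G_on_Y2}, transporting the action on $X_1^{V,W}$ to $X_1^{W,V}$ via Remark~\ref{rem:G4:X1VW=X1WV}.

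Equation~\eqref{eq:G4:phi2(w,v')} requires nothing new: it is precisely the relation $\varphi_2(w\otimes v')=0$ established in the proof of Lemma~\ref{lem:G4:Y2}. To reach Equation~\eqref{eq:G4:phi2(w,ehv')} I would apply $g$ to \eqref{eq:G4:phi2(w,v')}, using $gw=-w$ and $gv'=-\sigma_1(\epsilon^{-1}h^{-2})\epsilon hv'$; this rewrites the left-hand side as $\sigma_1(\epsilon^{-1}h^{-2})\varphi_2(w\otimes\epsilon hv')$, and since the scalar is nonzero the vanishing follows. For Equation~\eqref{eq:G4:phi2(w,ev')} I would instead act with $\epsilon$ on the defining relation $v''=\varphi_2(\epsilon w\otimes v')$: on one side $\epsilon v''=\rho(\epsilon)v''$ by Remark~\ref{rem:G4:G_on_Y2}, while $\epsilon(\epsilon w)=\sigma(\epsilon^2)w$ and $\epsilon v'=\epsilon v'$ turn the other side into $\sigma(\epsilon^2)\varphi_2(w\otimes\epsilon v')$; solving and using $\sigma(\epsilon^2)^{-1}=\sigma(\epsilon^2)$, which holds because $\epsilon^4=1$ forces $\sigma(\epsilon^2)^2=1$, gives the stated value $\rho(\epsilon)\sigma(\epsilon^2)v''$.

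Finally, Equation~\eqref{eq:G4:phi2(w,hv')} I would obtain by applying $g$ to the just-proved Equation~\eqref{eq:G4:phi2(w,ev')}, using $g(\epsilon v')=-\sigma_1(\epsilon^{-1}h^{-2})hv'$ on the left and $gv''=gv''$ on the right; this yields $\varphi_2(w\otimes hv')=\sigma_1(\epsilon h^2)\rho(\epsilon)\sigma(\epsilon^2)\,gv''$. The one genuinely nontrivial step is the final scalar simplification, and this is where I expect all the hypotheses to be consumed. Writing $\sigma_1(\epsilon h^2)=\sigma_1(\epsilon^{-1}h^2)\sigma_1(\epsilon^2)$ and substituting $\sigma_1(\epsilon^2)=\rho(\epsilon^2)\sigma(\epsilon^2)$ and $\sigma_1(\epsilon^{-1}h^2)=\sigma(\epsilon^{-1}h^2)\rho(\epsilon^{-1}h^2)$ from Lemma~\ref{lem:G4:X1}, the $\sigma$-part collapses to $\sigma(\epsilon^{-1}h^2)$ via $\sigma(\epsilon^4)=1$ and the $\rho$-part to $\rho(\epsilon^2)$ via $\rho(h^2)=1$ (which follows from $\rho(h)=-1$); then the defining relation $\rho(\epsilon)=\rho(g^2)\sigma(\epsilon^{-1}h^2)$ together with $\epsilon^3=\epsilon^{-1}$ combine these into $\rho(\epsilon^{-1}g^{-2})$, as required.
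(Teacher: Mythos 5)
Your proof is correct and takes essentially the same route as the paper's: the paper likewise takes $\varphi_2(w\otimes v')=0$ from the proof of Lemma~\ref{lem:G4:Y2}, acts with $g$ on it to get \eqref{eq:G4:phi2(w,ehv')}, and acts with $h^2$ and $g\epsilon$ on $v''=\varphi_2(\epsilon w\otimes v')$ to get the remaining two equations. Your acting with $\epsilon$ instead of $h^2$ (these differ by the central element $\epsilon^{-1}h^2$) and with $g$ on the already-derived \eqref{eq:G4:phi2(w,ev')} (so $g\epsilon$ in total) amounts to the same computation, and your final scalar simplification to $\rho(\epsilon^{-1}g^{-2})$ checks out.
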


\begin{proof}
  In the proof of Lemma~\ref{lem:G4:Y2} we have shown that $\varphi _2(w\otimes
  v')=0$.
	Act with $g$ on this equation to obtain Equation \eqref{eq:G4:phi2(w,ehv')}.
  To prove Equations \eqref{eq:G4:phi2(w,ev')} and
  \eqref{eq:G4:phi2(w,hv')}, act
	with $h^2$ and $g\epsilon $ on $v''=\varphi_2(\epsilon w\otimes v')$,
  respectively.
\end{proof}

\begin{lem}
  \label{lem:G4:Y3}
	Assume that $\sigma(g)=\rho(h)=-1$,
	$\rho(\epsilon)=\rho(g^2)\sigma(\epsilon^{-1}h^2)$ and $\rho(\epsilon^2)=-1$.
	Then $X_3^{W,V}=0$. 
\end{lem}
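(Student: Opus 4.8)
The plan is to imitate the vanishing argument of Lemma~\ref{lem:Y4} and reduce everything to a single generator. First I would identify $X_3^{W,V}=\varphi_3(W\otimes X_2^{W,V})$ as a $\K G$-module. By Lemma~\ref{lem:G4:Y2} the support of $X_2^{W,V}$ is the two-element conjugacy class $(\epsilon hg^2)^G=\{\epsilon hg^2,\epsilon^2hg^2\}$, and a short conjugation computation using $g\epsilon=\epsilon^{-1}g$ and $ghg^{-1}=\epsilon^{-1}h$ shows that $g$ interchanges these two elements. Hence $G^g$ acts transitively on $\supp X_2^{W,V}$, so $G\trid(g,\epsilon hg^2)=g^G\times(\epsilon hg^2)^G$, and \cite[Lemma~1.7]{MR2732989} gives $X_3^{W,V}=\K G\,\varphi_3(w\otimes v'')$. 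Thus it suffices to prove $\varphi_3(w\otimes v'')=0$.

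Next I would expand the three summands of $\varphi_3=\id-c_{X_2,W}c_{W,X_2}+(\id\otimes\varphi_2)c_{1,2}$ on $w\otimes v''$. For the squared braiding I would use $c_{W,X_2}(w\otimes v'')=gv''\otimes w$ and then $c_{X_2,W}(gv''\otimes w)=(\epsilon^2hg^2)w\otimes gv''$, since $gv''$ has degree $\epsilon^2hg^2$ (Remark~\ref{rem:G4:G_on_Y2}); evaluating $(\epsilon^2hg^2)w=\sigma(\epsilon^2)hw$ via Remark~\ref{rem:G4:G_on_W} yields $c_{X_2,W}c_{W,X_2}(w\otimes v'')=\sigma(\epsilon^2)hw\otimes gv''$. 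For the last summand I would insert the explicit form of $v''$ from \eqref{eq:G4:v''}, apply $c_{1,2}$ termwise (each leading $w$ becoming $g\cdot(\text{second factor})$ followed by $w$), and then apply $\varphi_2$ to the four resulting tensors $w\otimes v'$, $w\otimes\epsilon hv'$, $w\otimes hv'$, $w\otimes\epsilon v'$ via Lemma~\ref{lem:G4:Y3_auxiliar}. By \eqref{eq:G4:phi2(w,v')} and \eqref{eq:G4:phi2(w,ehv')} the first two terms vanish, while \eqref{eq:G4:phi2(w,ev')} and \eqref{eq:G4:phi2(w,hv')} produce contributions proportional to $w\otimes v''$ and $hw\otimes gv''$, respectively, after substituting $g\cdot w=-w$ and $g\cdot\epsilon hw=-\sigma(\epsilon^2)hw$.

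Finally I would collect coefficients. The $w\otimes v''$ contributions combine to $\bigl(1+\sigma(\epsilon^2)^2\rho(\epsilon)^2\bigr)w\otimes v''$, which vanishes because $\rho(\epsilon^2)=-1$ forces $\rho(\epsilon)^2=-1$ while $\epsilon^4=1$ gives $\sigma(\epsilon^2)^2=\sigma(\epsilon^4)=1$. The $hw\otimes gv''$ contributions combine to $-\sigma(\epsilon^2)\bigl(1+\sigma(\epsilon h^{-2})\rho(\epsilon^{-1}g^{-2})\bigr)hw\otimes gv''$; here the decisive scalar identity is $\sigma(\epsilon h^{-2})\rho(\epsilon^{-1}g^{-2})=-1$, which follows from $\sigma(\epsilon h^{-2})=\sigma(\epsilon^{-1}h^2)^{-1}$ (as $\epsilon^{-1}h^2\in Z(G)$), the hypothesis $\rho(\epsilon)=\rho(g^2)\sigma(\epsilon^{-1}h^2)$, and $\rho(\epsilon)^2=-1$. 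With both coefficients zero we obtain $\varphi_3(w\otimes v'')=0$ and hence $X_3^{W,V}=0$. I expect the main obstacle to be the careful termwise bookkeeping of the threefold tensor in the $(\id\otimes\varphi_2)c_{1,2}$ summand and the verification of these two scalar identities that force the exact cancellation.
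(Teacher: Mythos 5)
Your proposal is correct and takes essentially the same route as the paper: reduce to showing $\varphi_3(w\otimes v'')=0$ via the orbit argument, expand $\varphi_3=\id-c_{X_2,W}c_{W,X_2}+(\id\otimes\varphi_2)c_{1,2}$ using \eqref{eq:G4:v''} and Lemma~\ref{lem:G4:Y3_auxiliar}, and cancel the two surviving terms. The only cosmetic differences are that you compute the double braiding directly from degrees where the paper invokes Lemma~\ref{lem:c^2}, and that you spell out the scalar identities $\sigma(\epsilon^2)^2\rho(\epsilon)^2=-1$ and $\sigma(\epsilon h^{-2})\rho(\epsilon^{-1}g^{-2})=-1$ which the paper leaves implicit; both check out.
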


\begin{proof}
  Since $G\triangleright (g,\epsilon hg^2)=g^G\times (\epsilon hg^2)^G$,
  we conclude that
  $$X_3^{W,V}=\K G\varphi_3(w\otimes v'').$$
	Thus it is enough to prove that $\varphi_3(w\otimes v'')=0$.
  {}From Lemma~\ref{lem:G4:Y3_auxiliar} we know that
	$\varphi_2(w\otimes v')=\varphi_2(w\otimes \epsilon hv')=0$.
	Hence Lemma \ref{lem:c^2} implies that 
	\begin{align*}
		\varphi_3(w\otimes v'')=w\otimes v''-\sigma(\epsilon^2)hw\otimes gv''
    &+\sigma(\epsilon h^{-2})g\epsilon hw\otimes\varphi_2(w\otimes hv')\\
		&-\sigma(\epsilon^2)\rho(\epsilon)gw\otimes\varphi_2(w\otimes\epsilon v').
	\end{align*}
	Now Equations \eqref{eq:G4:phi2(w,ev')} and \eqref{eq:G4:phi2(w,hv')} imply
	that $\varphi_3(w\otimes v'')=0$. 
\end{proof}

We summarize the results concerning the adjoints actions in the following
proposition.

\begin{pro}
	\label{pro:G4:cartan_matrix}
    Assume that 
    \[
      \rho(h)=\sigma(g)=-1,\quad
      \rho(\epsilon)=\rho(g^2)\sigma(\epsilon^{-1}h^2).
    \]
    Then the following hold:
    \begin{enumerate}
        \item $(\ad V)(W)$ is absolutely simple and $(\ad V)^2(W)=0$.
        \item The Yetter-Drinfeld modules
			$(\ad W)^m(V)$ are absolutely simple or zero for all
			$m\in\N_0$ if and only if 
            $\rho(\epsilon^2)=-1$. 
            In this case, 
			$(\ad W)^2(V)\not=0$ and $(\ad W)^3(V)=0$.  
	\end{enumerate}
\end{pro}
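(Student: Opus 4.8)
The plan is to deduce the Proposition directly from the computations of the preceding lemmas, using only the identifications $(\ad V)^n(W)\simeq X_n^{V,W}$ and $(\ad W)^n(V)\simeq X_n^{W,V}$ furnished by Lemma~\ref{lem:X_n}. The standing hypotheses $\rho(h)=\sigma(g)=-1$ and $\rho(\epsilon)=\rho(g^2)\sigma(\epsilon^{-1}h^2)$ are exactly those under which Lemmas~\ref{lem:G4:X1}, \ref{lem:G4:X2}, \ref{lem:G4:Y2} and \ref{lem:G4:Y3} were proved, so these lemmas may be invoked without further hypotheses. In this sense the Proposition is a summary, and the proof is essentially bookkeeping.

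For part (1), we would argue as follows. By Lemma~\ref{lem:X_n}, $(\ad V)(W)\simeq X_1^{V,W}$, and under $\rho(\epsilon)=\rho(g^2)\sigma(\epsilon^{-1}h^2)$ the module $X_1^{V,W}$ is absolutely simple by Lemma~\ref{lem:G4:X1}. Likewise $(\ad V)^2(W)\simeq X_2^{V,W}=0$ by Lemma~\ref{lem:G4:X2}. This yields (1) at once.

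For part (2), first note that $(\ad W)(V)\simeq X_1^{W,V}$ is absolutely simple: the braiding $c_{V,W}$ gives an isomorphism $X_1^{V,W}\to X_1^{W,V}$ in $\ydG$ by Remark~\ref{rem:G4:X1VW=X1WV}, so $X_1^{W,V}$ inherits absolute simplicity from $X_1^{V,W}$. Next, by Lemma~\ref{lem:G4:rho(g)=-1}(1) the module $(\ad W)^2(V)\simeq X_2^{W,V}$ is non-zero, regardless of the value of $\rho(\epsilon^2)$. We then treat the two implications separately. For the \emph{if} direction, assume $\rho(\epsilon^2)=-1$; then $X_2^{W,V}$ is absolutely simple by Lemma~\ref{lem:G4:Y2} and $X_3^{W,V}=0$ by Lemma~\ref{lem:G4:Y3}, whence $X_m^{W,V}=\varphi_m(W\otimes X_{m-1}^{W,V})=0$ for all $m\ge 3$ by induction. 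Together with the absolute simplicity of $V=X_0^{W,V}$ and of $X_1^{W,V}$, this shows every $(\ad W)^m(V)$ is absolutely simple or zero, and moreover $(\ad W)^2(V)\ne0$, $(\ad W)^3(V)=0$. For the \emph{only if} direction, suppose every $(\ad W)^m(V)$ is absolutely simple or zero; then $X_2^{W,V}$, being non-zero, is absolutely simple, and the criterion of Lemma~\ref{lem:G4:Y2} forces $\rho(\epsilon^2)=-1$.

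All the analytic content already sits in the cited lemmas, so I expect no genuine obstacle. The two points requiring care are minor: one must use Lemma~\ref{lem:G4:rho(g)=-1}(1) to know $X_2^{W,V}\ne0$ \emph{without} assuming $\rho(\epsilon^2)=-1$ (this is what powers the \emph{only if} direction), and one must cite the recursive vanishing $X_{m-1}^{W,V}=0\Rightarrow X_m^{W,V}=0$ so that the conclusion ``absolutely simple or zero for all $m\in\N_0$'' truly covers every $m$ rather than only $m\le 3$.
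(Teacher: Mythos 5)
Your proof is correct and follows essentially the same route as the paper, whose own proof is simply the citation of Lemmas~\ref{lem:G4:X1}, \ref{lem:G4:X2}, \ref{lem:G4:Y2} and \ref{lem:G4:Y3}; you merely make the bookkeeping explicit (the identification via Lemma~\ref{lem:X_n}, the isomorphism $X_1^{W,V}\simeq X_1^{V,W}$ from Remark~\ref{rem:G4:X1VW=X1WV}, and the recursion $X_{m-1}^{W,V}=0\Rightarrow X_m^{W,V}=0$). Your appeal to Lemma~\ref{lem:G4:rho(g)=-1}(1) for the unconditional non-vanishing of $(\ad W)^2(V)$ is a sound alternative to the paper's implicit use of the fact that $v''\neq 0$ in the proof of Lemma~\ref{lem:G4:Y2}, and it correctly powers the \emph{only if} direction.
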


\begin{proof}
    The claim follows from Lemmas \ref{lem:G4:X1}, \ref{lem:G4:X2}, \ref{lem:G4:Y2}
    and \ref{lem:G4:Y3}.
\end{proof}

\begin{rem}
    \label{rem:R1}
    Let $\epsilon_1\coloneqq\epsilon^{-1}$, $h_1\coloneqq h^{-1}$ and $g_1\coloneqq
    hg$. 
    Then 
    $G=\langle\epsilon_1,h_1,g_1\rangle$, $\epsilon _1^2\not=1$, and  
    there is a unique group homomorphism $\Gamma_4\to G$ such that
    \[
    a\mapsto g_1,\quad
    b\mapsto h_1,\quad
    \nu \mapsto\epsilon_1.
    \]
\end{rem}

\begin{lem}
	\label{lem:G4:R1}
    Assume that 
    $\sigma(g)=\rho (h)=-1$,
    $\rho(\epsilon)=\rho(g^2)\sigma(\epsilon^{-1}h^2)$, and
    $\rho(\epsilon^2)=-1$.
    Then $R_1(V,W)=\left(V^*,X_1^{V,W}\right)$, where $V^*\simeq M(h_1,\rho^*)$ and
    $\rho^*$ is the irreducible representation of $G^{h}$ dual to $\rho$,
    $X_1^{V,W}\simeq M(g_1,\sigma_1)$ and $\sigma_1$ is the irreducible
    representation of $G^{g_1}$ given in Lemma \ref{lem:G4:X1}, and 
    \begin{align}
        \label{eq:G4:conditions123:R1}&
        \sigma_1(g_1)=\rho^*(h_1)=-1,\quad
        \rho^*(\epsilon_1)=\rho^*(g_1^2)\sigma_1(\epsilon_1^{-1} h_1^{2}),
        \quad
        \rho^*(\epsilon_1^{2})=-1.
     \end{align}
\end{lem}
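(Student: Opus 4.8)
The plan is to follow the proof of Lemma~\ref{lem:T:R1} line for line. First I would identify the reflected pair, and then verify the three character identities in \eqref{eq:G4:conditions123:R1} one at a time by evaluating $\rho^*$ and $\sigma_1$ on the relevant central and centralizer elements. Since by Proposition~\ref{pro:G4:cartan_matrix}(1) the module $(\ad V)(W)=X_1^{V,W}$ is absolutely simple and $(\ad V)^2(W)=0$, the Cartan entry $a_{12}^{(V,W)}$ equals $-1$, whence $R_1(V,W)=(V^*,X_1^{V,W})$. The identification $V^*\simeq M(h^{-1},\rho^*)=M(h_1,\rho^*)$ is the duality $M(x,\gamma)^*\simeq M(x^{-1},\gamma^*)$ recalled before Remark~\ref{rem:T:R1}, and $X_1^{V,W}\simeq M(hg,\sigma_1)=M(g_1,\sigma_1)$ is Lemma~\ref{lem:G4:X1}.

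Two of the equations in \eqref{eq:G4:conditions123:R1} are then immediate. We have $\sigma_1(g_1)=\sigma_1(hg)=-1$ by Lemma~\ref{lem:G4:X1}, and since $\rho^*(x)=\rho(x^{-1})$ for a character, $\rho^*(h_1)=\rho^*(h^{-1})=\rho(h)=-1$ and $\rho^*(\epsilon_1^2)=\rho^*(\epsilon^{-2})=\rho(\epsilon^2)=-1$ by hypothesis. The substantial step is the middle equation $\rho^*(\epsilon_1)=\rho^*(g_1^2)\sigma_1(\epsilon_1^{-1}h_1^2)$. Here I would first use the $\Gamma_4$-relations $hg=\epsilon gh$ and $h\epsilon=\epsilon h$ to compute $g_1^2=(hg)^2=\epsilon^{-1}h^2g^2$, which is central since $\epsilon^{-1}h^2,g^2\in Z(G)$, and to observe $\epsilon_1^{-1}h_1^2=\epsilon h^{-2}=(\epsilon^{-1}h^2)^{-1}\in G^{g_1}$. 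Using $\rho(h)^2=1$ this yields $\rho^*(g_1^2)=\rho(\epsilon)\rho(g^2)^{-1}$, while Lemma~\ref{lem:G4:X1} gives $\sigma_1(\epsilon_1^{-1}h_1^2)=\sigma_1(\epsilon^{-1}h^2)^{-1}=\bigl(\sigma(\epsilon^{-1}h^2)\rho(\epsilon^{-1}h^2)\bigr)^{-1}=\sigma(\epsilon^{-1}h^2)^{-1}\rho(\epsilon)$. Multiplying the two and invoking the hypothesis $\rho(\epsilon)=\rho(g^2)\sigma(\epsilon^{-1}h^2)$ collapses the right-hand side to $\rho(\epsilon)^2\rho(\epsilon)^{-1}=\rho(\epsilon)=\rho^*(\epsilon_1)$, as required.

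The main obstacle is pure bookkeeping rather than anything conceptual: the middle equation mixes the character $\rho^*$ on $G^{h_1}=G^h$ with the character $\sigma_1$ on $G^{g_1}=G^{hg}$, so one must rewrite $g_1^2$ and $\epsilon_1^{-1}h_1^2$ in terms of the central generators $\epsilon^{-1}h^2$ and $g^2$ on which $\rho$, $\sigma$, and hence $\rho^*$, $\sigma_1$, have already been evaluated, while carefully tracking the inverses introduced by dualization. Once the group-element identities $g_1^2=\epsilon^{-1}h^2g^2$ and $\epsilon_1^{-1}h_1^2=(\epsilon^{-1}h^2)^{-1}$ are established and $\rho(h)=-1$ is used to eliminate the $h^2$-factor, the character identity drops out of the defining hypothesis on $(V,W)$.
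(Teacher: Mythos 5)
Your proposal is correct and follows essentially the same route as the paper: the identification $R_1(V,W)=(V^*,X_1^{V,W})$ via Proposition~\ref{pro:G4:cartan_matrix}(1) and Lemma~\ref{lem:G4:X1}, followed by verifying the middle equation of \eqref{eq:G4:conditions123:R1} through the computation $\rho^*\bigl((hg)^2\bigr)\sigma_1(\epsilon h^{-2})=\rho(\epsilon)\rho(g^{-2})\sigma(\epsilon h^{-2})\rho(\epsilon)=\rho(\epsilon)$, using $(hg)^2=\epsilon^{-1}h^2g^2$, $\rho(h)^2=1$, and the hypothesis $\rho(\epsilon)=\rho(g^2)\sigma(\epsilon^{-1}h^2)$, which is exactly the calculation in the paper's proof. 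The only difference is that you also write out the two equations the paper dismisses as ``easily shown,'' and you do so correctly.
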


\begin{proof}
    The description of $R_1(V,W)$ follows from Proposition
    \ref{pro:G4:cartan_matrix}(1) and Lemma \ref{lem:G4:X1}.
    Further,
    \begin{align*}
      \rho^*((hg)^2)\sigma_1(\epsilon h^{-2})&=\rho^*(\epsilon^{-1}h^2g^2)
      \sigma(\epsilon h^{-2})\rho(\epsilon h^{-2})\\
      &=\rho(\epsilon)\rho(g^{-2})\sigma(\epsilon h^{-2})\rho(\epsilon )
      =\rho(\epsilon).
    \end{align*}
    The remaining equations in \eqref{eq:G4:conditions123:R1} are easily
    shown.
\end{proof}

\begin{rem}
    \label{rem:R2}
    Let $\epsilon_2\coloneqq\epsilon^{-1}$, $h_2\coloneqq\epsilon hg^2$ and
    $g_2\coloneqq g^{-1}$.  Then 
    $G=\langle\epsilon_2,h_2,g_2\rangle$, $\epsilon_2^2\not=1$, and
    there is a unique group homomorphism $\Gamma_4\to G$ such that
    \[
    a\mapsto g_2,\quad
    b\mapsto h_2,\quad
    \nu \mapsto\epsilon_2.
    \]
\end{rem}

\begin{lem}
	\label{lem:G4:R2}
    Assume that 
    $\sigma(g)=\rho (h)=-1$,
    $\rho(\epsilon)=\rho(g^2)\sigma(\epsilon^{-1}h^2)$, and
    $\rho(\epsilon^2)=-1$.
    Then $R_2(V,W)=\left(X_2^{W,V},W^*\right)$, where $W^*\simeq
    M(g_2,\sigma^*)$ and $\sigma^*$ is the irreducible representation of
    $G^g$ dual to $\sigma$, $X_2^{W,V}\simeq M(h_2,\rho_2)$ and
    $\rho_2$ is the irreducible representation of $G^{h_2}$ given in
    Lemma \ref{lem:G4:Y2}, and 
    \begin{align}
        \label{eq:G4:condition:R2}&\sigma^*(g_2)=\rho_2(h_2)=-1,\quad
        \rho_2(\epsilon_2)=\rho_2(g_2^2)\sigma^*(\epsilon^{-1}_2h_2^2),\quad
        \rho_2(\epsilon^{2}_2)=-1.
    \end{align}
\end{lem}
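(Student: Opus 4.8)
The plan is to mirror the proof of Lemma~\ref{lem:G4:R1}, since $R_2$ is entirely analogous to $R_1$ with the construction carried out at the second vertex. First I would read off the description $R_2(V,W)=(X_2^{W,V},W^*)$ from Proposition~\ref{pro:G4:cartan_matrix}(2) together with Lemma~\ref{lem:G4:Y2}: part~(2) of the proposition gives $(\ad W)^2(V)\ne0$ and $(\ad W)^3(V)=0$, so the Cartan entry $a_{21}^{(V,W)}=-2$, whence the reflection $R_2$ replaces $V$ by $(\ad W)^2(V)\simeq X_2^{W,V}$, and Lemma~\ref{lem:G4:Y2} identifies $X_2^{W,V}\simeq M(h_2,\rho_2)$. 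The second component $W^*\simeq M(g^{-1},\sigma^*)=M(g_2,\sigma^*)$ follows at once from the general duality $M(x,\gamma)^*\simeq M(x^{-1},\gamma^*)$ recalled before Remark~\ref{rem:T:R1}, with $g_2=g^{-1}$ as in Remark~\ref{rem:R2}.

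It then remains to check the three identities in \eqref{eq:G4:condition:R2}. Two of them are immediate: $\sigma^*(g_2)=\sigma(g)^{-1}=-1$ since $\sigma^*(x)=\sigma(x)^{-1}$ for a character, and $\rho_2(h_2)=\rho_2(\epsilon hg^2)=-1$ is exactly one of the values recorded in Lemma~\ref{lem:G4:Y2}. For $\rho_2(\epsilon_2^2)=-1$ I would first extract the auxiliary fact $\rho_2(\epsilon)=\rho(\epsilon)$ from the three values of $\rho_2$ in Lemma~\ref{lem:G4:Y2}: comparing $\rho_2(\epsilon^{-1}h^2)=\rho_2(\epsilon)^{-1}\rho_2(h)^2$ with $\rho_2(\epsilon^{-1}h^2)=\rho(\epsilon g^{-4})$ and $\rho_2(h)=\rho(\epsilon g^{-2})$ gives $\rho_2(\epsilon)^{-1}=\rho(\epsilon^{-1})$. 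Since $\epsilon_2=\epsilon^{-1}$, one obtains $\rho_2(\epsilon_2^2)=\rho(\epsilon)^{-2}=\rho(\epsilon^2)^{-1}=-1$ using the standing hypothesis $\rho(\epsilon^2)=-1$.

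The computational heart, and the only genuine obstacle, is the middle identity $\rho_2(\epsilon_2)=\rho_2(g_2^2)\sigma^*(\epsilon_2^{-1}h_2^2)$. Here the first task is to simplify $h_2^2=(\epsilon hg^2)^2$ inside $G$, using that $g^2$ is central, that $h\epsilon=\epsilon h$, and that $\epsilon^4=1$; a short manipulation yields $h_2^2=\epsilon^2h^2g^4$, and therefore $\epsilon_2^{-1}h_2^2=\epsilon^{-1}h^2g^4\in Z(G)$. Evaluating $\sigma^*$ on this central element, with $\sigma(g)=-1$ so that $\sigma(g^4)=1$, gives $\sigma^*(\epsilon_2^{-1}h_2^2)=\sigma(\epsilon^{-1}h^2)^{-1}$. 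Substituting $\rho_2(\epsilon_2)=\rho(\epsilon)^{-1}$ and $\rho_2(g_2^2)=\rho(g^2)^{-1}$, the identity becomes $\rho(\epsilon)^{-1}=\rho(g^2)^{-1}\sigma(\epsilon^{-1}h^2)^{-1}$, which after inverting is precisely the hypothesis $\rho(\epsilon)=\rho(g^2)\sigma(\epsilon^{-1}h^2)$. The only delicate point is the bookkeeping of inverses introduced by the dual representation $\sigma^*$ and by the substitutions $g_2=g^{-1}$, $\epsilon_2=\epsilon^{-1}$, but no new idea beyond the relations of $\Gamma_4$ and the three defining hypotheses is required.
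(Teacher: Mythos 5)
Your proposal is correct and follows essentially the same route as the paper's proof: the description of $R_2(V,W)$ is read off from Proposition~\ref{pro:G4:cartan_matrix} and Lemma~\ref{lem:G4:Y2}, the value $\rho_2(\epsilon_2)=\rho(\epsilon^{-1})$ is extracted via $\epsilon^{-1}=(\epsilon^{-1}h^2)h^{-2}$ exactly as in the paper, and the middle identity reduces, after the same evaluation $\rho_2(g_2^2)\sigma^*(\epsilon_2^{-1}h_2^2)=\rho(g^{-2})\sigma(\epsilon h^{-2})=\rho(\epsilon)^{-1}$, to the standing hypothesis $\rho(\epsilon)=\rho(g^2)\sigma(\epsilon^{-1}h^2)$. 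The only difference is cosmetic: you make explicit the simplification $h_2^2=\epsilon^2h^2g^4$ (hence $\epsilon_2^{-1}h_2^2=\epsilon^{-1}h^2g^4$, using $\epsilon^4=1$) and the final step $\rho_2(\epsilon_2^2)=\rho(\epsilon^2)^{-1}=-1$, both of which the paper leaves implicit.
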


\begin{proof}
    The description of $R_2(V,W)$ follows from Proposition
    \ref{pro:G4:cartan_matrix} and Lemma \ref{lem:G4:Y2}.
    Further, $\rho _2(h_2)=-1$ by Lemma~\ref{lem:G4:Y2}, and
    \begin{align*}
        \rho_2(\epsilon_2)=(\rho_2(\epsilon^{-1}h^2)\rho_2(h)^{-2})
        =\rho(\epsilon g^{-4}\epsilon ^{-2}g^4)=\rho(\epsilon ^{-1}),
    \end{align*}
    \begin{align*}
        \rho_2(g^{-2})\sigma^*(\epsilon^{-1}h^2g^4)
        =\rho(g^{-2})\sigma(\epsilon h^{-2})=\rho(\epsilon)^{-1}.
    \end{align*}
    Now one easily concludes the claimed formulas
    on $\rho _2$.
\end{proof}

Before proving Theorem \ref{thm:G4} we list some well-known finite-dimensional
Nichols algebras related to non-abelian epimorphic images of $\Gamma_4$. 

\begin{pro}
	\label{pro:NA_small}
	Let $G$ be a non-abelian quotient of $\Gamma_4$. Let $V=M(h,\rho)$ and
	$W=M(g,\sigma)$, where $\rho$ and $\sigma$ are characters of the
  centralizers $G^h$ and
  $G^g$, respectively. Assume that $\sigma(g)=\rho (h)=-1$,
	$\rho(\epsilon)=\rho (g^2)\sigma(\epsilon^{-1}h^2)$, and
	$\rho(\epsilon)^2=-1$. Then $V$ and $(\ad W)^2(V)$ are of diagonal type.
  The braiding matrices
  with respect to the bases $\{v,gv\}$ and $\{v'',gv''\}$ are
	\[
	\begin{pmatrix}
		-1 & \rho(\epsilon)\\
		\rho(\epsilon) & -1
	\end{pmatrix}
	\quad\text{and}\quad
	\begin{pmatrix}
    -1 & \rho(\epsilon^{-1})\\
    \rho(\epsilon^{-1}) & -1
	\end{pmatrix},
	\]
	respectively. In particular, The Nichols algebras $\NA(V)$ and
  $\NA\left( (\ad W)^2(V) \right)$ are of Cartan type $A_1\times A_1$ if
  $\mathrm{char}\,\K=2$ and $A_2$ if $\mathrm{char}\,\K\not=2$.
  Their Hilbert series is
	\[
	\mathcal{H}_{\NA(V)}(t)=\mathcal{H}_{\NA((\ad W)^2(V))}(t)=
	\begin{cases}
		(1+t)^2 & \text{if $\mathrm{char}\,\K=2$,}\\
		(1+t)^2(1+t^2) & \text{if $\mathrm{char}\,\K\ne2$}.
	\end{cases}
	\]
\end{pro}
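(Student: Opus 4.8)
The plan is to read off both braiding matrices directly from the module structure and then identify the Nichols algebra from the resulting generalized Dynkin diagram. Recall that for a Yetter--Drinfeld module the braiding sends a homogeneous pair $u\otimes u'$ to $(\deg u)\,u'\otimes u$; this is exactly the formula already used implicitly in Lemma~\ref{lem:c^2}, so no extra input is needed.

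First I would treat $V$. By Subsection~\ref{subsection:G4:preliminaries} the pair $\{v,gv\}$ is a basis of $V$ with degrees $h$ and $\epsilon^{-1}h$, and by Remark~\ref{rem:G4:G_on_V} (applicable since $\rho(h)=-1$) both $h$ and $\epsilon^{-1}h$ act diagonally on this basis; hence $V$ is of diagonal type. A direct computation gives $c(v\otimes v)=-v\otimes v$, $c(gv\otimes gv)=-gv\otimes gv$, and $c(v\otimes gv)=-\rho(\epsilon)^{-1}gv\otimes v$, $c(gv\otimes v)=-\rho(\epsilon)^{-1}v\otimes gv$. The hypothesis $\rho(\epsilon)^2=-1$ rewrites $-\rho(\epsilon)^{-1}$ as $\rho(\epsilon)$, which yields the first claimed matrix. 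For $(\ad W)^2(V)\simeq X_2^{W,V}$ I would use Lemma~\ref{lem:G4:Y2}, which supplies the basis $\{v'',gv''\}$ with degrees $\epsilon hg^2$ and $\epsilon^2hg^2$, together with Remark~\ref{rem:G4:G_on_Y2} for the $G$-action. Since $\epsilon$, $h$ and $g^2$ all act diagonally on $\{v'',gv''\}$, this module is again of diagonal type, and the same computation produces diagonal entries $-1$ and off-diagonal entries $\rho(\epsilon^{-1})$; one notes $\rho(\epsilon^{-1})^2=\rho(\epsilon^2)^{-1}=-1$, so the second matrix has the same shape $\left(\begin{smallmatrix}-1 & q\\ q & -1\end{smallmatrix}\right)$ with $q^2=-1$.

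It then remains to identify the algebra from the datum $q_{11}=q_{22}=-1$, $q_{12}q_{21}=-1$. If $\mathrm{char}\,\K\ne2$, the two vertices are connected, since $q_{12}q_{21}=q_{11}^{-1}\ne1$, with Cartan integers $a_{12}=a_{21}=-1$, so the type is $A_2$; the three positive roots $\alpha_1,\alpha_1+\alpha_2,\alpha_2$ each have self-braiding $-1$ (for the middle root, $q_{11}q_{22}(q_{12}q_{21})=-1$) and hence square to zero, so with total degrees $1,2,1$ they contribute the factors $(1+t),(1+t^2),(1+t)$, giving $(1+t)^2(1+t^2)$. If $\mathrm{char}\,\K=2$, then $q_{12}q_{21}=1$, so the square of the braiding between the two lines is the identity and the diagram decouples; the Nichols algebra factors as the tensor product of the two one-dimensional Nichols algebras, each equal to $\K[x]/(x^2)$ because $(2)_q=1+q=0$, giving type $A_1\times A_1$ and Hilbert series $(1+t)^2$.

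All of this is mechanical once the two action tables of Remarks~\ref{rem:G4:G_on_V} and~\ref{rem:G4:G_on_Y2} are in hand. The only point requiring genuine care is the characteristic-two case: there the equality $-1=1$ collapses the generalized Dynkin diagram, and one must argue via the decoupling $q_{12}q_{21}=1$ that $\NA$ is a tensor product of two truncated polynomial algebras, rather than be misled into thinking that a self-braiding equal to $1$ forces $\NA$ to be infinite-dimensional.
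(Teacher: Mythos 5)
Your proposal is correct and follows essentially the same route as the paper: the paper's proof also reads off the two braiding matrices by direct calculation from Remarks~\ref{rem:G4:G_on_V} and~\ref{rem:G4:G_on_Y2} and then invokes the theory of diagonal-type Nichols algebras for the Hilbert series. The only difference is that where the paper simply cites the root-system classification for the datum $q_{11}=q_{22}=-1$, $q_{12}q_{21}=-1$, you work out the three positive roots and their height-two root vectors by hand (and correctly handle the characteristic-two degeneration via $q_{12}q_{21}=1$ and the tensor-product decoupling), which is a sound, more self-contained instantiation of the same argument.
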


\begin{proof}
	The braiding matrices are obtained from a direct calculation using Remarks
	\ref{rem:G4:G_on_V} and \ref{rem:G4:G_on_Y2}.  The claim concerning the
	Hilbert series follows from the definition of the root system \cite[Section
	3]{MR2207786} and \cite[Thm.\,1]{MR2207786}.
\end{proof}

\begin{pro}
	\label{pro:NA_64}
	Let $G$ be a non-abelian quotient of $\Gamma_4$. Let $V=M(h,\rho)$ and
	$W=M(g,\sigma)$, where $\rho$ and $\sigma $ are characters of the
  centralizers $G^h$ and $G^g$, respectively.
  Assume that $\sigma(g)=\rho(h)=-1$,
	$\rho(\epsilon)=\rho (g^2)\sigma(\epsilon^{-1}h^2)$, and
	$\rho(\epsilon^2)=-1$. Then the Nichols algebras of $W$ and $(\ad
	W)(V)$ are finite-dimensional with Hilbert series 
	\begin{align*}
		\mathcal{H}_{\NA(W)}(t)&=\mathcal{H}_{\NA((\ad W)(V))}(t)=(1+t)^4(1+t^2)^2\\
		&=1+4t+8t^2+12t^3+14t^4+12t^5+8t^6+4t^7+t^8.
	\end{align*}
\end{pro}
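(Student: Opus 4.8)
The plan is to recognize both $W$ and $(\ad W)(V)$ as braided vector spaces supported on the dihedral quandle with four elements, each having all diagonal self-braidings equal to $-1$, and then to invoke the known computation of the Nichols algebra of such a braided vector space, in the same spirit as Proposition~\ref{pro:NA_small} handles the two-dimensional diagonal pieces. First I would read off the braiding of $W$ from Remark~\ref{rem:G4:G_on_W}. Writing $w_1=w$, $w_2=hw$, $w_3=\epsilon w$, $w_4=\epsilon hw$, the braiding has the form $c(w_i\otimes w_j)=q_{ij}w_{i\trid j}\otimes w_i$, where $\supp W$ is the dihedral quandle and the scalars $q_{ij}$ are monomials in $\sigma(g)=-1$ and $\sigma(\epsilon^2)$. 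The key observation is that $G$ is a quotient of $\Gamma_4$, so $\epsilon^4=1$ and hence $\sigma(\epsilon^2)^2=\sigma(\epsilon^4)=1$; together with $\sigma(g)=-1$ this forces every diagonal entry to be $q_{ii}=-1$. Thus $W$ is of dihedral-quandle type with constant diagonal $-1$ and a $\{\pm1\}$-valued $2$-cocycle controlled by $\sigma(\epsilon^2)$.

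Next I would treat $(\ad W)(V)$. By Lemma~\ref{lem:X_n} it equals $X_1^{W,V}$, which by Remark~\ref{rem:G4:X1VW=X1WV} is isomorphic as a braided vector space to $X_1^{V,W}=M(hg,\sigma_1)$. Using Remark~\ref{rem:R1} I would recognize $M(hg,\sigma_1)$ as a module of the very same shape for the $\Gamma_4$-quotient structure $(g_1,h_1,\epsilon_1)=(hg,h^{-1},\epsilon^{-1})$: indeed $\sigma_1(g_1)=\sigma_1(hg)=-1$ by Lemma~\ref{lem:G4:X1}, so its braiding is again of dihedral-quandle type with diagonal $-1$. Its cocycle differs from that of $W$ only through $\sigma_1(\epsilon^2)=\rho(\epsilon^2)\sigma(\epsilon^2)=-\sigma(\epsilon^2)$, reflecting the hypothesis $\rho(\epsilon^2)=-1$.

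Finally I would conclude by citing the known computation of the Nichols algebra of the dihedral quandle with four elements and diagonal self-braiding $-1$ (see \cite{MR1994219} and \cite{MR2803792}): it is finite-dimensional with Hilbert series $(1+t)^4(1+t^2)^2$, independently of $\mathrm{char}\,\K$. The characteristic-independence is transparent, since $1+q_{ii}=1+(-1)=0$ in every field, so each degree-one generator squares to zero and contributes a factor $(1+t)$, with the remaining $(1+t^2)^2$ coming from the higher-degree Poincaré--Birkhoff--Witt generators. Applying this to $W$ and to $M(hg,\sigma_1)$ yields both Hilbert series, and expanding gives $(1+t)^4(1+t^2)^2=1+4t+8t^2+12t^3+14t^4+12t^5+8t^6+4t^7+t^8$.

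The main obstacle is that the two cocycles in play — the one attached to $W$ and the one attached to $(\ad W)(V)$, differing by the sign $\sigma(\epsilon^2)\mapsto-\sigma(\epsilon^2)$ — are genuinely inequivalent: a short check (comparing the putative coboundary relation on the pairs fixed by $\trid$) shows they are not cohomologous, so the second Nichols algebra cannot be deduced from the first by a mere rescaling of the basis. Hence the uniform Hilbert series cannot be obtained from a single isomorphism of braided vector spaces; it must rest on the computation covering the whole admissible family of cocycles on this quandle, exactly as in Proposition~\ref{pro:NA_small} the two diagonal braidings with off-diagonal entries $\rho(\epsilon)^{\pm1}$ both give the $A_2$ series. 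Normalizing the explicit braiding into the standard cocycle form and matching it against the reference is the delicate bookkeeping step I would need to carry out with care.
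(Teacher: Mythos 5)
Your reduction idea---view both $W$ and $(\ad W)(V)\simeq X_1^{V,W}\simeq M(hg,\sigma_1)$ as braided vector spaces on the dihedral quandle with four elements and quote a known Nichols algebra computation---is in spirit the same reduction the paper makes, and your two concrete observations are correct: all diagonal self-braidings equal $-1$ (using $\sigma(\epsilon^2)^2=\sigma(\epsilon^4)=1$), and the two cocycles genuinely differ, since $\sigma_1(\epsilon^2)=\rho(\epsilon^2)\sigma(\epsilon^2)=-\sigma(\epsilon^2)$ by Lemma~\ref{lem:G4:X1} and the hypothesis $\rho(\epsilon^2)=-1$. But the step you defer as ``delicate bookkeeping''---matching the braiding against the reference---is where the entire proof lives, and your citation is wrong: neither \cite{MR1994219} nor \cite{MR2803792} computes Nichols algebras over this quandle (in this paper those sources are invoked only for the tetrahedron quandle, in the proof of Theorem~\ref{thm:T}). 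The $64$-dimensional computation is \cite[Thm.\,4.6]{MR2732989}, and its hypotheses are \emph{not} ``constant diagonal $-1$'': it concerns a pair of $2$-dimensional absolutely simple Yetter--Drinfeld modules over a non-abelian quotient of $\Gamma_2$ subject to a compatibility condition between the two characters. Your proposed general principle---that every cocycle on this quandle with $q_{ii}=-1$ yields the series $(1+t)^4(1+t^2)^2$---is false: the two $2$-element orbits of the quandle span sub-braided vector spaces of diagonal type with diagonal entries $-1$, and for generic off-diagonal scalars these are already infinite-dimensional (finiteness forces $q_{ij}q_{ji}\in\{1,-1\}$ there). So a constraint beyond the diagonal is needed, and you neither identify it nor verify it.

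The paper supplies exactly this missing step by a subgroup restriction that your purely braided-vector-space route bypasses. Let $H=\langle g,\epsilon\rangle$ be the subgroup generated by $\supp W$; it is a quotient of $\Gamma_2$ via $a\mapsto g$, $b\mapsto\epsilon g$, $\nu\mapsto\epsilon^2$. Since $g^H=\{g,\epsilon^2g\}$ and $(\epsilon g)^H=\{\epsilon g,\epsilon^3g\}$, the restriction of $W$ to $H$ decomposes as $W=V'\oplus W'$ with $V'\simeq M(g,\rho')$ and $W'\simeq M(\epsilon g,\sigma')$, where $\rho'(g)=\sigma'(\epsilon g)=-1$ and $\rho'(\epsilon^2)=\sigma'(\epsilon^2)=\sigma(\epsilon^2)$; the hypothesis of \cite[Thm.\,4.6]{MR2732989} then reads $\rho'(\epsilon^2(\epsilon g)^2)\sigma'(\epsilon^2g^2)=\sigma(\epsilon^4)=1$, which holds because $\epsilon^4=1$ in any quotient of $\Gamma_4$. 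This identity is the ``admissibility'' your argument leaves implicit, and it must be verified, not assumed. The same argument with $L=\langle hg,\epsilon\rangle$ and $a\mapsto hg$, $b\mapsto\epsilon hg$, $\nu\mapsto\epsilon^2$ handles $X_1^{V,W}$, and since the theorem applies uniformly, the inequivalence of the two cocycles that you correctly flagged causes no difficulty. Without the decomposition over $H$ and the verification of the character condition, your proof rests on a computation that does not exist in the literature you cite.
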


\begin{proof}
	Let $H$ be the subgroup of $G$ generated by $\supp W$.
	Then $H=\langle g,\epsilon \rangle $, and there exists a unique
	surjective group homomorphism $\Gamma_2\to H$ with $a\mapsto g$, $b\mapsto\epsilon
	g$, and $\nu\mapsto\epsilon^2$. Consider $W$ as Yetter-Drinfeld module over
  $H$ by restriction of the $G$-module structure to $H$.
  Since $g^H=\{g,\epsilon^2g\}$ and $(\epsilon
	g)^H=\{\epsilon g,\epsilon^3g\}$, we conclude that $W=V'\oplus W'$,
  where $V'=\K w+\K \epsilon w$ and $W'=\K hw +\K \epsilon hw$
  are simple Yetter-Drinfeld modules over $H$.
  Further, $V'\simeq M(g,\rho')$, for some character $\rho '$ of
  $H^g=\langle g,\epsilon^2\rangle$, and $W'\simeq M(\epsilon g,\sigma')$
  for some character $\sigma '$ of $H^{\epsilon g}=\langle \epsilon
	g,\epsilon^2\rangle$.  Using Remark~\ref{rem:G4:G_on_W}
  one obtains the following formulas.
  \begin{align}
    \label{eq:rho':W}
    \rho '(g)=&\;-1,& \rho '(\epsilon ^2)=&\;\sigma (\epsilon ^2),&
    \sigma '(\epsilon g)=&\;-1,& \sigma '(\epsilon ^2)=&\;\sigma (\epsilon ^2).
  \end{align}
  Therefore $\rho '(\epsilon ^2(\epsilon g)^2)\sigma '(\epsilon
  ^2g^2)=\sigma (\epsilon ^4)=1$, and hence
	$W=V'\oplus W'$ satisfies the assumptions of \cite[Thm.\,4.6]{MR2732989}.
  Thus $\NA (W)$ is finite-dimensional and has the claimed
  Hilbert series.

	The claim concerning the Nichols algebra $\NA\left((\ad W)(V) \right)$ is
  similar. We may replace $(\ad W)(V)$ by $X_1^{V,W}$.
  Let $L$ be the subgroup of $G$ generated by $\supp X_1^{V,W}$ and
	consider the unique group homomorphism $\Gamma_2\to L$ with $a\mapsto hg$,
	$b\mapsto\epsilon hg$ and $\nu\mapsto\epsilon^2$. As we did in the previous
	paragraph, \cite[Thm.\,4.6]{MR2732989} yields the Hilbert series of
  $\NA(X_1^{V,W})$. 
\end{proof}

\begin{proof}[Proof of Theorem \ref{thm:G4}]
    $(1)\implies(2)$.  Since $\NA(V\oplus W)$ is finite-dimensional,
    the pair $(V,W)$ admits
    all reflections by \cite[Cor 3.18]{MR2766176} and the Weyl groupoid is
    finite by \cite[Prop.\,3.23]{MR2766176}. 

    $(2)\implies(3)$. By \cite[Prop.\,4.3]{partII}, after changing the object of
    $\mathcal{W}(V,W)$ and possibly interchanging $V$ and $W$, we may assume
    that $V=M(h,\rho)$ and $W=M(g,\sigma)$, such that $\deg \rho =\deg \sigma
    =1$, $(\ad V)^2(W)=0$ and $(\ad W)^4(V)=0$. By
    \cite[Thm.\,7.2(3)]{MR2734956}, $(\ad W)^m(V)$ is absolutely simple or zero
    for all $m\in\N_0$. Lemma~\ref{lem:G4:rho(g)=-1} implies that $(\ad W)(V)$
    and $(\ad W)^2(V)$ are non-zero and $\rho(h)=\sigma(g)=-1$. Since $(\ad
    V)(W)$ and $(\ad W)^2(V)$ are absolutely simple,
    Lemmas~\ref{lem:G4:X1} and \ref{lem:G4:Y2} imply that
    $\rho(\epsilon)=\rho(g^2)\sigma(\epsilon^{-1}h^2)$ and
    $\rho(\epsilon^2)=-1$.

    $(3)\implies(1)$. Proposition~\ref{pro:G4:cartan_matrix} and
    Lemmas~\ref{lem:G4:R1} and \ref{lem:G4:R2} imply that $\mathcal{W}(V,W)$
    is standard with Cartan matrix of type $B_2$. By \cite[Cor.\,2.7(2)]{MR2732989}, 
    \[
        \NA(V\oplus W)\simeq\NA(V)\otimes\NA((\ad W)(V))\otimes\NA((\ad W)^2(V))\otimes\NA(W)
    \]
    as $\N_0^2$-graded vector spaces, where 
    \begin{gather*}
        \deg V=\alpha_1,\quad
        \deg W=\alpha_2,\\
        \deg\left((\ad W)(V)\right)=\alpha_1+\alpha_2,\quad
        \deg\left((\ad W)^2(V)\right)=\alpha_1+2\alpha_2.
    \end{gather*}
		Now the claim on the Hilbert series of $\NA (V\oplus W)$ follows from
    Propositions \ref{pro:NA_small} and \ref{pro:NA_64}.
\end{proof}

\section{An application}
\label{section:applications}

In \cite{partII} we presented five quandles which are essential to our
classification.  These quandles are:
\begin{equation}
    \label{eq:quandles}
    \begin{aligned}
        Z_T^{4,1} &:\;(243)\;(134)\;(142)\;(123)\;\id\\
        Z_2^{2,2} &:\;(24)\;(13)\;(24)\; (13)\\
        Z_3^{3,1} &:\;(23)\;(13)\;(12)\; \id\\
        Z_3^{3,2} &:\;(23)(45)\;(13)(45)\;(12)(45)\;(123)\;(132)\\
        Z_4^{4,2} &:\;(24)(56)\;(13)(56)\;(24)(56)\;(13)(56)\;(1234)\;(1432)
    \end{aligned}
\end{equation}

Let us describe these quandles in a different way.  Remark \ref{rem:T+1} states
that $Z_T^{4,1}$ is isomorphic to the disjoint union of the trivial quandle
with one element and the quandle associated with the vertices of the
tetrahedron.  The quandle $Z_2^{2,2}$ is isomorphic to the dihedral quandle
$\D_4$ with four elements. The quandle $Z_3^{3,1}$ is isomorphic to the
disjoint union of the trivial quandle with one element and the dihedral quandle
$\D_3$ with three elements. Remark \ref{rem:amalgamated} describes the quandle
$Z_4^{4,2}$ as an amalgamated sum of $\D_4$ and the trivial quandle with two
elements.  Similarly, the quandle $Z_3^{3,2}$ can be presented as an
amalgamated sum of $\D_3$ with the trivial quandle of two elements. See
\cite[\S1]{MR1994219} for disjoint union and amalgamated sum of quandles.

\begin{thm} 
	\label{thm:main}
    Let $\K$ be a field, $G$ be a non-abelian group, and $V$ and $W$ be
    finite-dimensional absolutely simple Yetter-Drinfeld modules over $G$ such
    that $G$ is generated by the support of $V\oplus W$. 
		Assume that the pair $(V,W)$
    admits all reflections and the Weyl groupoid $\mathcal{W}(V,W)$ of $(V,W)$
    is finite.  If $c_{W,V}c_{V,W}\ne \id_{V\otimes W}$, then
    $\supp(V\oplus W)$ is isomorphic to one of the following quandles:
		\[
			Z_T^{4,1},\;
			Z_2^{2,2},\;
			Z_3^{3,1},\;
			Z_3^{3,2},\;
			Z_4^{4,2}.
		\]
    Moreover, the group $G$ is isomorphic to a quotient of the enveloping group
    of the quandle $\supp(V\oplus W)$:
    \begin{center}
        \begin{tabular}{c|c c c c c}
            Quandle &
            $Z{}_{T}^{4,1}$ &
            $Z_{2}^{2,2}$ &
            $Z_{3}^{3,1}$ &
            $Z_{3}^{3,2}$ &
            $Z_{4}^{4,2}$
            \tabularnewline
            \hline 
            Enveloping group &
            $T$ &
			$\Gamma_{2}$ &
            $\Gamma_{3}$ &
            $\Gamma_{3}$ &
            $\Gamma_{4}$
        \end{tabular}
    \end{center}
\end{thm}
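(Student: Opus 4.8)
The plan is to derive the result from the structure theorem \cite[Thm.\,4.5]{partII} by a short case analysis followed by an identification of enveloping groups. The hypotheses of Theorem~\ref{thm:main}, namely that $(V,W)$ admits all reflections, that $\mathcal{W}(V,W)$ is finite, and that $c_{W,V}c_{V,W}\ne\id_{V\otimes W}$, are exactly those under which \cite[Thm.\,4.5]{partII} applies. First I would invoke that theorem to conclude that $G$ is a non-abelian quotient of one of the four groups $T$, $\Gamma_2$, $\Gamma_3$, $\Gamma_4$, and, crucially, that it also identifies the two conjugacy classes of $G$ on which the simple modules $V$ and $W$ are supported.

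With the group and the two supporting conjugacy classes in hand, the next step is to compute the conjugation quandle $\supp(V\oplus W)=\supp V\cup\supp W$ in each case. For $G$ a quotient of $T$, the supports are the central class $z^G$ and the tetrahedral class $x_1^G$, so Remark~\ref{rem:T+1} identifies $\supp(V\oplus W)$ with $Z_T^{4,1}$. For $G$ a quotient of $\Gamma_4$, the supports are $h^G$ and $g^G$, and Remark~\ref{rem:amalgamated} identifies the resulting amalgamated sum with $Z_4^{4,2}$. For $G$ a quotient of $\Gamma_2$ the relevant quandle is the dihedral quandle $\D_4\cong Z_2^{2,2}$, as already analysed in \cite{MR2732989}. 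The group $\Gamma_3$ is the only source of ambiguity: here two admissible choices of supporting classes occur, yielding the disjoint union $\D_3\sqcup\{*\}\cong Z_3^{3,1}$ and the amalgamated sum $Z_3^{3,2}$, respectively.

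For the \emph{moreover} clause I would appeal to the universal property of the enveloping group of a quandle. Since $G$ is generated by $\supp(V\oplus W)$ and this set is a union of conjugacy classes carrying its conjugation quandle structure, the defining relations of the enveloping group hold in $G$, so there is a canonical surjective homomorphism from the enveloping group of $\supp(V\oplus W)$ onto $G$; thus $G$ is a quotient of that enveloping group. It then remains to identify each enveloping group with the group listed in the table: for $Z_T^{4,1}$ this is $T$, as recorded in Subsection~\ref{subsection:T:preliminaries}, and for the four dihedral-type quandles one checks directly that the enveloping groups are $\Gamma_2$, $\Gamma_3$, $\Gamma_3$, $\Gamma_4$, matching the presentations of $\Gamma_n$ recalled in Subsection~\ref{subsection:G4:preliminaries}.

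The main obstacle I anticipate is the $\Gamma_3$ case: a single group produces two genuinely different quandles, $Z_3^{3,1}$ and $Z_3^{3,2}$, so one must verify that both arise from admissible supporting classes and that no further quandle slips through. A secondary, more bookkeeping difficulty is confirming that the hypotheses of Theorem~\ref{thm:main} match those of \cite[Thm.\,4.5]{partII}, so that the structure theorem may be invoked without first passing through finite-dimensionality of $\NA(V\oplus W)$; here Theorems~\ref{thm:T} and~\ref{thm:G4}, which supply the equivalence between finite Weyl groupoid and finite-dimensional Nichols algebra in the $T$ and $\Gamma_4$ cases, provide the needed reassurance.
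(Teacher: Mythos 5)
Your overall skeleton (structure theorem from \cite{partII}, case analysis by group, quandle identification, universal property of the enveloping group for the \emph{moreover} clause) matches the paper's, and your enveloping-group argument is in the correct direction. But there are two genuine gaps. First, the entry point is wrong: \cite[Thm.\,4.5]{partII} is stated under the hypothesis that $\NA(V\oplus W)$ is finite-dimensional, which is strictly stronger than the hypotheses of the theorem here, and your proposed ``reassurance'' via Theorems~\ref{thm:T} and~\ref{thm:G4} is circular --- those theorems apply only once you already know $G$ is a quotient of $T$ or $\Gamma_4$, and they say nothing about the $\Gamma_2$ and $\Gamma_3$ cases. The paper instead uses \cite[Prop.\,4.3]{partII} to change the object of $\mathcal{W}(V,W)$ so that $(\ad V)(W)\ne0$, $(\ad V)^2(W)=0$ and $(\ad W)^4(V)=0$, and then invokes \cite[Thm.\,4.4]{partII}, which needs only these normalized vanishing conditions. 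Second, and more seriously, this normalization replaces $(V,W)$ by a \emph{different} object of the Weyl groupoid, so the quandle you compute is the support of a reflected pair, not of the original $(V,W)$. The theorem asserts something about $\supp(V\oplus W)$ for the given pair, so one must prove that the support quandle is constant along the groupoid. The paper does exactly this: for $\Gamma_2$ by the sizes of conjugacy classes, and for $T$ and $\Gamma_4$ via Lemmas~\ref{lem:T:R1}, \ref{lem:T:R2}, \ref{lem:G4:R1} and \ref{lem:G4:R2}, which show that every reflection again yields a pair supported on classes producing the same quandle ($Z_T^{4,1}$, resp.\ $Z_4^{4,2}$). Your proposal never addresses this invariance, and without it the conclusion holds only for some unspecified reflection of $(V,W)$.

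You correctly flag the $\Gamma_3$ case as the delicate one, but you leave its actual content unproved: the issue is not merely that two quandles occur, but that one must \emph{exclude} the a priori possible configuration in which both supports are conjugacy classes of size $3$. The paper rules this out by noting that such supports have the form $(gz_1)^G\cup(hgz_2)^G$ with $z_1,z_2\in Z(G)$, and that for $s=gz_1$, $t=hgz_2$ one has $stst\ne tsts$, so \cite[Prop.\,8.5]{MR2734956} forces $(\ad V')(W')$ to be non-irreducible, contradicting \cite[Thm.\,2.5]{MR2732989}. Asserting that ``two admissible choices occur'' does not substitute for this exclusion argument, and without it a sixth quandle could, for all your proof shows, slip through.
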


\begin{proof}
	By \cite[Prop.\,4.3]{partII}, after
	changing the object of $\mathcal{W}(V,W)$ and possibly interchanging $V$ and
	$W$ we may assume that $(\ad V)(W)\ne0$, $(\ad V)^2(W)=0$, and $(\ad
  W)^4(V)=0$. Then
	\cite[Thm.\,4.4]{partII} implies that the group $G$ is a quotient of $\Gamma_n$ for
	$n\in\{2,3,4\}$ or a quotient of $T$. 

	Suppose first that $G$ is a quotient of $\Gamma_2$. Since $\Gamma_2$ has conjugacy
	classes of size one or two \cite[Section 3]{MR2732989} and $G$ is
	non-abelian, it follows that the quandles appearing after applying
	reflections to $(V,W)$ are isomorphic to $\supp(V\oplus W)\simeq Z_2^{2,2}$. 

	Suppose now that $G$ is a quotient of $\Gamma_3$. Any conjugacy class of $G$
  has size $1$, $2$, or $3$.  Assume that an object of $\mathcal{W}(V,W)$
  is represented by a pair $(V',W')$ of absolutely irreducible Yetter-Drinfeld
  modules over $G$ with $|\supp V'|=|\supp W'|=3$. Then
  $\supp(V'\oplus W')$ is isomorphic as a quandle to $(gz_1)^G\cup
  (hgz_2)^G$ for some $z_1,z_2\in Z(G)$ by \cite[Sect.\,3.1]{MR2732989}.
  Let $s=gz_1$ and $t=hgz_2$. Since $stst\ne tsts$,
  \cite[Prop.\,8.5]{MR2734956} implies that $(\ad V')(W')$ is not irreducible,
  which contradicts \cite[Thm.\,2.5]{MR2732989}. 
	
	Suppose that $G$ is a quotient of $\Gamma_4$. Then all conjugacy classes of $G$
  have size $1$, $2$, or $4$ \cite[Section 3]{MR2732989}. After changing the object
	of $\mathcal{W}(V,W)$ we may assume that $V=M(h,\rho)$, where $\rho$ is a
	character of $G^h$ and $W=M(g,\sigma)$, where $\sigma$ is a character of
	$G^g$. In particular, $\supp(V\oplus W)\simeq Z_4^{4,2}$ as quandles. By
	Theorem \ref{thm:G4}, $\rho(h)=\sigma(g)=-1$,
	$\rho(\epsilon)=\rho(g^2)\sigma(\epsilon^{-1}h^2)$ and $\rho(\epsilon)^2=-1$.
	Lemmas \ref{lem:G4:R1} and \ref{lem:G4:R2} imply that after applying
	reflections to $(V,W)$ one obtains new pairs $(V',W')$ such that
	$\supp(V'\oplus W')\simeq Z_4^{4,2}$ as quandles. 

	Finally, suppose that $G$ is a quotient of $T$. By changing the object
  of $\mathcal{W}(V,W)$ if necessary, we may
	assume that $V=M(z,\rho)$, where $\rho$ is a representation of $G$ and
	$W=M(x_1,\sigma)$, where $\sigma$ is a character of $G^{x_1}$.
        In particular $\supp(V\oplus W)\simeq Z_T^{4,1}$ as quandles.
        From Theorem
	\ref{thm:T} we obtain that $\deg\rho=1$,
	$(\rho(x_1)\sigma(z))^2-\rho(x_1)\sigma(z)+1=0$,
	$\sigma(x_1)=-1$, $\sigma(x_2x_3)=1$, and $\rho(x_1z)\sigma(z)=1$.
        Lemmas \ref{lem:T:R1} and \ref{lem:T:R2}
	imply that all reflections of $(V,W)$ are pairs $(V',W')$ with
	$\supp(V'\oplus W')\simeq Z_T^{4,1}$ as quandles. This proves the theorem.
\end{proof}

As a combination of our results with the main results in \cite{MR2766176} we
obtain the following corollary concerning finite-dimensional Nichols algebras.

\begin{cor} \label{cor:main}
    Let $\K$ be a field, $G$ be a non-abelian group, and $V$ and $W$ be
    absolutely simple Yetter-Drinfeld modules over $G$ such
    that $G$ is generated by $\supp(V\oplus W)$.  Assume that the Nichols
    algebra $\NA(V\oplus W)$ is finite-dimensional. If
    $c_{W,V}c_{V,W}\ne \id _{V\otimes W}$, then $\supp(V\oplus W)$ is
    isomorphic to one of the quandles of Theorem \ref{thm:main} and the group
    $G$ is isomorphic to a quotient of the enveloping group of the quandle
    $\supp(V\oplus W)$.
\end{cor}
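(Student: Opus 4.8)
The plan is to reduce the corollary directly to Theorem~\ref{thm:main} by checking that finite-dimensionality of $\NA(V\oplus W)$ is a stronger hypothesis than the one used there. The only gap between the two statements is that Theorem~\ref{thm:main} assumes that $(V,W)$ admits all reflections and that $\mathcal{W}(V,W)$ is finite, whereas here we are given only that $\dim\NA(V\oplus W)<\infty$.

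First I would note that $V\oplus W$ is the homogeneous component of degree one of $\NA(V\oplus W)$, so finiteness of the Nichols algebra forces $V$ and $W$ to be finite-dimensional; this is exactly what is needed to apply Theorem~\ref{thm:main}. Next I would invoke the two general results from \cite{MR2766176} already used for the implication $(1)\implies(2)$ in the proofs of Theorems~\ref{thm:T} and \ref{thm:G4}: by \cite[Cor.\,3.18]{MR2766176} the pair $(V,W)$ admits all reflections, and by \cite[Prop.\,3.23]{MR2766176} the Weyl groupoid $\mathcal{W}(V,W)$ is finite. With these in hand, together with the standing hypotheses that $G$ is non-abelian and generated by $\supp(V\oplus W)$, that $V$ and $W$ are absolutely simple, and that $c_{W,V}c_{V,W}\ne\id_{V\otimes W}$, all assumptions of Theorem~\ref{thm:main} are met, and its two conclusions follow at once: $\supp(V\oplus W)$ is one of the five listed quandles, and $G$ is a quotient of the corresponding enveloping group.

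I do not expect any genuine obstacle here. The entire classification effort is contained in Theorem~\ref{thm:main}, and the corollary merely records that the finite-dimensionality of the Nichols algebra feeds into the reflection-theoretic hypotheses (admitting all reflections and finiteness of the Weyl groupoid) through the cited general results. The single point to verify is that the remaining hypotheses are verbatim the same in both statements, which is immediate, so the proof amounts to one or two lines citing \cite[Cor.\,3.18]{MR2766176}, \cite[Prop.\,3.23]{MR2766176}, and Theorem~\ref{thm:main}.
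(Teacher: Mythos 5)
Your proposal is correct and follows essentially the same route as the paper: the published proof likewise cites \cite[Cor.\,3.18]{MR2766176} for admitting all reflections and \cite[Prop.\,3.23]{MR2766176} for finiteness of the Weyl groupoid, then applies Theorem~\ref{thm:main}. Your extra remark that $\dim\NA(V\oplus W)<\infty$ forces $V$ and $W$ to be finite-dimensional (as required in Theorem~\ref{thm:main} but not stated in the corollary) is a small point the paper leaves implicit, and it is a welcome addition.
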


\begin{proof}
    Since $\NA(V\oplus W)<\infty$, the pair $(V,W)$ admits all reflections by
    \cite[Cor 3.18]{MR2766176} and the Weyl groupoid is finite by
    \cite[Prop.\,3.23]{MR2766176}. So Theorem \ref{thm:main} applies.
\end{proof}

\subsection*{Acknowledgement}
Leandro Vendramin was supported by Conicet, UBACyT 20020110300037 and the
Alexander von Humboldt Foundation.

\bibliographystyle{abbrv}
\bibliography{refs}

\def\cprime{$'$}
\begin{thebibliography}{10}

\bibitem{MR1994219}
N.~Andruskiewitsch and M.~Gra{\~n}a.
\newblock From racks to pointed {H}opf algebras.
\newblock {\em Adv. Math.}, 178(2):177--243, 2003.

\bibitem{MR2766176}
N.~Andruskiewitsch, I.~Heckenberger, and H.-J. Schneider.
\newblock The {N}ichols algebra of a semisimple {Y}etter-{D}rinfeld module.
\newblock {\em Amer. J. Math.}, 132(6):1493--1547, 2010.

\bibitem{MR2525553}
M.~Cuntz and I.~Heckenberger.
\newblock Weyl groupoids of rank two and continued fractions.
\newblock {\em Algebra Number Theory}, 3(3):317--340, 2009.

\bibitem{MR2803792}
M.~Gra{\~n}a, I.~Heckenberger, and L.~Vendramin.
\newblock Nichols algebras of group type with many quadratic relations.
\newblock {\em Adv. Math.}, 227(5):1956--1989, 2011.

\bibitem{MR2207786}
I.~Heckenberger.
\newblock The {W}eyl groupoid of a {N}ichols algebra of diagonal type.
\newblock {\em Invent. Math.}, 164(1):175--188, 2006.

\bibitem{MR2732989}
I.~Heckenberger and H.-J. Schneider.
\newblock Nichols algebras over groups with finite root system of rank two {I}.
\newblock {\em J. Algebra}, 324(11):3090--3114, 2010.

\bibitem{MR2734956}
I.~Heckenberger and H.-J. Schneider.
\newblock Root systems and {W}eyl groupoids for {N}ichols algebras.
\newblock {\em Proc. Lond. Math. Soc. (3)}, 101(3):623--654, 2010.

\bibitem{rank2}
I.~{Heckenberger} and L.~{Vendramin}.
\newblock The classification of {N}ichols algebras with finite root system of
  rank two.
\newblock {\em Accepted for publication in J. Europ. Math. Soc,
  arXiv:1311.2881}, 2013.

\bibitem{partII}
I.~{Heckenberger} and L.~{Vendramin}.
\newblock Nichols algebras over groups with finite root system of rank two
  {II}.
\newblock {\em Accepted for publication in J. Group Theory, arXiv:1302.0213},
  2013.

\bibitem{MR2390080}
I.~Heckenberger and H.~Yamane.
\newblock A generalization of {C}oxeter groups, root systems, and {M}atsumoto's
  theorem.
\newblock {\em Math. Z.}, 259(2):255--276, 2008.

\end{thebibliography}

\end{document}